\renewcommand{\le}{\leqslant}
\renewcommand{\ge}{\geqslant}
\renewcommand{\d}{~\text{d}}
\definecolor{mno}{rgb}{0.5,0.1,0.5}
\newcommand{\R}{\mathds R}
\newcommand{\Q}{\mathds Q}
\newcommand{\Ss}{\mathds S}
\newcommand{\Pp}{{\mathds P}}
\newcommand{\Ee}{{\mathds E}}
\newcommand{\I}{\mathds 1}
\newcommand{\N}{\mathds{N}}
\newcommand{\Z}{\mathds Z}
\newcommand{\w}{{\omega} }
\newtheorem{theorem}{Theorem}[section]
\newtheorem{lemma}[theorem]{Lemma}
\newtheorem{proposition}[theorem]{Proposition}
\numberwithin{equation}{section}
\theoremstyle{definition}
\newtheorem{remark}[theorem]{Remark}
\begin{document}
\allowdisplaybreaks
\title[] {\bfseries
Quenched Asymptotics for Symmetric L\'evy Processes
interacting with Poissonian fields}
\author{Jian Wang}
\thanks{\emph{J.\ Wang:} College of Mathematics and Informatics  \&
Fujian Key Laboratory of Mathematical Analysis and Applications (FJKLMAA)  \& Center for Applied Mathematics of Fujian Province (FJNU),
Fujian Normal University, Fuzhou, 350007, P.R. China. \texttt{jianwang@fjnu.edu.cn}}

\date{}

\begin{abstract} We establish explicit quenched asymptotics for pure-jump symmetric L\'evy processes in general Poissonian potentials, which is closely related to large time asymptotic behavior of solutions to the nonlocal parabolic Anderson problem with Poissonian interaction. In particular, when the density function with respect to the Lebesgue measure of the associated L\'evy measure is given
by $$\rho(z)= \frac{1}{|z|^{d+\alpha}}\I_{\{|z|\le 1\}}+ e^{-c|z|^\theta}\I_{\{|z|> 1\}}$$ for some $\alpha\in (0,2)$, $\theta\in (0,\infty]$ and $c>0$, exact quenched asymptotics is derived for potentials with the shape function given by $\varphi(x)=1\wedge |x|^{-d-\beta}$ for $\beta\in (0,\infty]$ with $\beta\neq 2$. We also discuss quenched asymptotics in the critical case (e.g.,\, $\beta=2$ in the example mentioned above).

\noindent \textbf{Keywords:} symmetric L\'evy process; Poissonian potential; quenched asymptotic;  nonlocal parabolic Anderson problem
\medskip

\noindent \textbf{MSC 2010:}
 60G52; 60J25; 60J55;  60J35; 60J75.
\end{abstract}

\maketitle

\allowdisplaybreaks

\section{Background and main results}\label{section1}
This paper is devoted to the analysis of large time asymptotic behavior of solutions to the nonlocal parabolic Anderson problem with Poissonian interaction:
\begin{equation}\label{e:an1} \frac{\partial u}{\partial t}=Lu-V^\w  u\quad \end{equation} on $[0,\infty)\times \R^d$ with the initial condition $u(0,x)=1$. Here, $L$ is the infinitesimal generator of pure-jump symmetric L\'evy process $Z:=(Z_t,\Pp_x)_{t\ge0, x\in \R^d}$ with the characteristic exponent
\begin{equation}\label{e:sym}\psi(\xi)=\int_{\R^d\backslash\{0\}} (1-\cos \langle \xi,z\rangle )\,\nu(dz)\end{equation}for some symmetric L\'evy measure $\nu$ (i.e., $\nu$ is a Radon measure on $\R^d\backslash\{0\}$ that satisfies $\int_{\R^d\backslash\{0\}} (|z|^2\wedge 1)\,\nu(dz)<\infty$ and $\nu(A)=\nu(-A)$ for any $A\in \mathscr{B}(\R^d\backslash \{0\})$); the potential
\begin{equation}\label{e:pot}V^\w(x)=\int_{\R^d} \varphi(x-y)\,\mu^\w(dy),\end{equation} where $\mu^\w$ is a Poissonian random measure on $\R^d$ with density $\rho\,dx$, $\rho>0$, on a given probability space $(\Omega, \Q)$, and $\varphi$ is a non-negative shape function on $\R^d$. We refer to the monographs \cite{Ko,Sz2} for background on this topic. Throughout this paper, $\Q$ and $\Ee_\Q$ denote the probability  and the expectation, respectively, generated by the Poissonian field; while $\Pp_x$ and $\Ee_x$ denote the probability and the expectation, respectively, corresponding to the L\'evy process $Z$ with the starting point $x\in \R^d$.

Under mild assumptions (see Subsection \ref{section2.2}), the solution to the problem \eqref{e:an1} enjoys the Feynman-Kac representation
\begin{equation}\label{e:an2} u^\w(t,x)=\Ee_x\left[\exp\left(-\int_0^t V^\w(Z_s)\,ds\right)\right].\end{equation} Thus, the analysis of  properties for the solution to \eqref{e:an1} can be done via \eqref{e:an2} by estimating $u^\w(t,x)$. There are a number of works on the large time behavior of $u^\w(t,x)$ in both the annealed sense (averaged with respect to $\Q$) and the quenched sense (almost sure with respect to $\Q$). In this paper we will mainly analyse the quenched behavior of  $u^\w(t,x)$ for pure-jump symmetric L\'evy processes in Poissonian potentials with more general shape function $\varphi$.

Let us begin with recalling the history on related topics. The annealed asymptotics of $u^{\omega}(t,x)$ was first established by Donsker and Varadhan \cite{DV} for symmetric (but not necessarily isotropic) non-degenerate $\alpha$-stable processes (including Brownian motion). They proved in \cite[Theorem 3]{DV} that, when the shape function $\varphi(x)$ is of order $o(1/|x|^{d+\alpha})$ as $|x|\to\infty$, which is refereed to the light tailed case later,
\begin{equation}\label{e:a0}\lim_{t\to\infty} \frac{\log \Ee_\Q[u^{\omega}(t,x)]}{t^{d/(d+\alpha)}}=-\rho^{\alpha/(d+\alpha)}\left(\frac{d+\alpha}{\alpha}\right)\left(\frac{\alpha \lambda_{(\alpha)}(B(0,1))}{d}\right)^{d/(d+\alpha)},\end{equation} where
$$\lambda_{(\alpha)}(B(0,1))=\inf_{{\rm open}\,\, U, |U|=w_d}\lambda_1^{(\alpha)}(U),$$ $w_d$ is the volume of the unit ball $B(0,1)$, and $\lambda_1^{(\alpha)}(U)$ is the principle Dirichlet eigenvalue for the symmetric $\alpha$-stable process killed upon exiting $U$. In particular, when the symmetric $\alpha$-stable process is isotropic, it follows from the Faber-Krahn isoperimetric inequality that the infimum in the definition of $\lambda_{(\alpha)}(B(0,1))$ above is attained on the ball of radius $r_d=w_d^{-1/d}$ and so $ \lambda_{(\alpha)}(B(0,1))=w_d^{\alpha/d}\lambda_1^{(\alpha)}(B(0,1))$.  Then, in this case \eqref{e:a0} is reduced into
\begin{equation}\label{e:a0-1}\lim_{t\to\infty} \frac{\log \Ee_\Q[u^{\omega}(t,x)]}{t^{d/(d+\alpha)}}=-(\rho w_d)^{\alpha/(d+\alpha)}\left(\frac{d+\alpha}{\alpha}\right)\left(\frac{\alpha \lambda_1^{(\alpha)}(B(0,1))}{d}\right)^{d/(d+\alpha)}.\end{equation}  Later \^{O}kura \cite{Ok0} extended \cite[Theorem 3]{DV} to a large class of symmetric L\'evy processes whose exponent $\psi$ satisfies $\exp(-t\psi(\cdot)^{1/2})\in L^1(\R^d;dx)$ for all $t>0$ and can  be written as
\begin{equation}\label{e:a1} \psi(\xi)=\psi^{(\alpha)}(\xi)+o(|\xi|^\alpha),\quad |\xi|\to0\end{equation} for some $\alpha\in (0,2]$. Here, $\psi^{(\alpha)}(\xi)$ is the characteristic exponent of a symmetric non-degenerate $\alpha$-stable process $Z^{(\alpha)}$ (see \eqref{alpha} below) satisfying some kind of summability condition on $\psi^{(\alpha)}_*(\xi):=\inf_{t\ge1}t^\alpha\psi^{(\alpha)}(t^{-1}\xi)$; see Subsection \ref{section2.1} for more details.
 More explicitly, it was shown in \cite[Theorem 4.1]{Ok0} that \eqref{e:a0} still holds for symmetric L\'evy processes above with $\lambda_{(\alpha)}(B(0,1))$ defined via the principle Dirichlet eigenvalue for the killed symmetric $\alpha$-stable process $Z^{(\alpha)}$ with exponent $\psi^{(\alpha)}$ given in \eqref{e:a1}.

When the characteristic exponent of the L\'evy process $Z$ further satisfies
$$\psi(\xi)=O(|\xi|^\alpha),\quad |\xi|\to0,$$ and the shape function $\varphi$ fulfills $K:=\lim_{|x|\to\infty}{\varphi(x)}{|x|^{d+\beta}}\in (0,\infty)$ for some $0<\beta<\alpha$ (which is referred to the heavy tailed case), \^{O}kura  proved in \cite[Theorem 6.3']{Ok1} that
\begin{equation}\label{e:a2}\lim_{t\to\infty} \frac{\log \Ee_\Q[u^{\omega}(t,x)]}{t^{d/(d+\beta)}}=- \rho w_d \Gamma\left(\frac{\beta}{d+\beta}\right)K^{d/(d+\beta)}\end{equation} holds for symmetric L\'evy processes satisfying \eqref{e:a1}.  See Pastur \cite{Pa} for the first result on this direction when $Z$ is Brownian motion.
The reader also can be  referred to \cite[Theorem 6.4']{Ok1} and \cite[Theorem 1 and Remarks]{Ok2} for the study in the critical case, e.g., $K:=\lim_{|x|\to\infty}{\varphi(x)}{|x|^{d+\alpha}}\in (0,\infty)$; see the appendix for details.
In particular, according to all the conclusions above, the annealed asymptotics of $u^{\omega}(t,x)$ is of order $t^{-d/(d+\beta\wedge \alpha)}$ when $\varphi(x)=K(1\wedge |x|^{-d-\beta})$. However, in the light tailed case, the right hand side of \eqref{e:a0} for the annealed asymptotics of $u^{\omega}(t,x)$ is independent of $K$ and $\beta$, while in the heavy tailed case that of \eqref{e:a2} only depends on the constants $K$ and $\beta$.

Compared with the annealed asymptotics, the study of the quenched asymptotics of $u^{\omega}(t,x)$ is relatively limit. The first result for the quenched asymptotics of $u^{\omega}(t,x)$ for Brownian motions moving in a Poissonian potential was established by Sznitman in \cite[Theorem]{Sz}, which showed that when $\varphi$ is compactly supported (which in particular corresponds to the shape function $\varphi(x)=K(1\wedge |x|^{-d-\beta})$ with $\beta=\infty$, and so  belongs to the special light tailed case), $\Q$-almost surely for all $x\in \R^d$,
$$ \lim_{t\to\infty} \frac{\log   u^{\omega}(t,x) }{t/(\log t)^{2/d}}=-\left(\frac{\rho w_d}{d}\right)^{2/d} \lambda_{{\rm BM}}(B(0,1)),$$ where $\lambda_{{\rm BM}}(B(0,1))$ is the principle Dirichlet  eigenvalue for the Brownian motion killed upon exiting $B(0,1)$. More recently, the quenched asymptotics of $u^{\omega}(t,x)$ for symmetric L\'evy processes satisfying \eqref{e:a1} has been extensively studied in \cite{KP}; see \cite[Table 1 in p.\ 165]{KP} for results concerning explicit L\'evy processes.

Concerning Brownian motions in a heavy tailed Poissonian potential,  for example, $\varphi(x)=1\wedge |x|^{-d-\beta} $ with $\beta\in (0,2)$, it was shown in \cite[Theorem 2]{Fuk2} that $\Q$-almost surely
for any $x\in \R^d$, $$ \lim_{t\to\infty} \frac{\log   u^{\omega}(t,x) }{t/(\log t)^{\beta/d}}=-\frac{d}{d+\beta}\left(\frac{\beta}{d(d+\beta)}\right)^{\beta/d}\left(\rho w_d\Gamma\left(\frac{\beta}{d+\beta}\right)\right)^{(d+\beta)/d}.$$ (Indeed, the second order asymptotics was also proved in \cite{Fuk2}.)

However, the quenched asymptotics of $u^\w(t,x)$ for symmetric L\'evy processes in heavy tailed cases, as well as that in light tailed cases when $\varphi$ does not have compact support, are still unknown. The goal of this paper is to fill up these gaps. To state our main contribution, in the following two results we are restricted ourselves on the special but typical shape function $\varphi(x)= 1\wedge |x|^{-d-\beta}$ with $\beta\in (0,\infty]$.

\begin{theorem}\label{Th-1} Let $Z$ be a rotationally symmetric $\alpha$-stable process on $\R^d$ with $\alpha\in (0,2)$. Then,

\begin{itemize}
\item[(i)] When $\beta\in (\alpha,\infty]$, $\Q$-almost surely for all $x\in \R^d$,
\begin{align*}-(d+\alpha)^{\alpha/(d+\alpha)}\left[\left(\frac{\alpha}{d}\right)^{d/(d+\alpha)}+\left(\frac{d}{\alpha}\right)^{\alpha/(d+\alpha)}\right]A_1
 \le& \liminf_{t\to\infty} \frac{u^\w(t,x)}{t^{d/(d+\alpha)}}\\
 \le& \limsup_{t\to\infty} \frac{u^\w(t,x)}{t^{d/(d+\alpha)}}\\
  \le& -\alpha (\alpha+d/2)^{-d/(\alpha+d)}A_1,\end{align*} where
$$A_1= \left(\frac{\rho w_d}{d}\right)^{\alpha/(d+\alpha)} [\lambda_1^{(\alpha)}(B(0,1))]^{d/(d+\alpha)}.$$

\item[(ii)] When $\beta\in (0,\alpha)$, $\Q$-almost surely for all $x\in \R^d$,
\begin{align*}-(d+\alpha)^{\beta/(d+\beta)}\left[\left(\frac{\beta}{d}\right)^{d/(d+\beta)}+\left(\frac{d}{\beta}\right)^{\beta/(d+\beta)}\right]A_2
 \le & \liminf_{t\to\infty} \frac{u^\w(t,x)}{t^{d/(d+\beta)}}\\
 \le&\limsup_{t\to\infty} \frac{u(t,x)}{t^{d/(d+\beta)}}\le -\alpha^{\beta/(d+\beta)} A_2,\end{align*} where
$$A_2= \left(\frac{  d}{d+\beta}\right)^{\d/(\beta+d)}\left(\frac{\beta }{d(d+\beta)}\right)^{\beta/(\beta+d)} \Gamma\left(\frac{\beta}{d+\beta}\right)\rho w_d.$$ \end{itemize}
 \end{theorem}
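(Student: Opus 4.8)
The plan is to establish the two displayed chains of inequalities separately through the Feynman--Kac representation \eqref{e:an2}, the main structural point being that a pure-jump $\alpha$-stable process can relocate over a distance $\ell$ within a bounded time with probability only polynomially small, of order $\ell^{-d-\alpha}$; this is exactly why the quenched scale $t^{d/(d+\alpha)}$ (resp.\ $t^{d/(d+\beta)}$) coincides with the annealed one in \eqref{e:a0-1} (resp.\ \eqref{e:a2}), in contrast with the Brownian situation. Throughout, factors depending polynomially on the auxiliary radii and on $t$ are discarded after taking logarithms and dividing by $t^{d/(d+\alpha)}$ or $t^{d/(d+\beta)}$; set $\gamma=\alpha$ in part (i) and $\gamma=\beta$ in part (ii).

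\emph{Lower bound.} Fix $\eps>0$. A Borel--Cantelli argument for the Poissonian field shows that, $\Q$-a.s., for every large $r$ there is a site $z_r$ with $|z_r|\le\exp\!\big((1+\eps)\rho w_d r^d/d\big)$ such that $B(z_r,r)$ contains no point of $\mu^\w$; since then only the obstacles outside $B(z_r,r)$ contribute to $V^\w$ on the slightly smaller ball $B(z_r,(1-\eps)r)$, there one has $V^\w\le C_\eps\rho\,r^{-\beta}$ (with $V^\w\equiv 0$ when $\beta=\infty$). In part (ii) this ``hard'' clearing has to be replaced by the extremal density profile of the Donsker--Varadhan rate function for the field, which is where the constant $\Gamma\!\big(\tfrac{\beta}{d+\beta}\big)$ appearing in $A_2$ enters. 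One bounds the expectation in \eqref{e:an2} from below by restricting to paths that, within a fixed short time $\delta$, perform a single large jump out of a neighbourhood of $x$ into the interior sub-ball $B(z_r,(1-\eps)r/2)$ --- an event of $\Pp_x$-probability $\gtrsim\delta\,r^d|z_r|^{-d-\alpha}$ that accumulates only an $O(\delta)$ amount of potential --- and then stay inside $B(z_r,(1-\eps)r)$ for the remaining time; by the eigenfunction expansion for $Z$ killed on exiting a ball, together with the bound on $V^\w$, this contributes at least $c\exp\!\big(-t\big[\lambda_1^{(\alpha)}(B(0,1))\big((1-\eps)r\big)^{-\alpha}+C_\eps\rho r^{-\beta}\big]\big)$. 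In part (i) the $r^{-\beta}$ term is of strictly lower order since $\beta>\alpha$, and one lets $r=r(t)$ run over the minimiser of $\tfrac{(d+\alpha)\rho w_d}{d}r^{d}+t\lambda_1^{(\alpha)}(B(0,1))r^{-\alpha}$; in part (ii) the spectral term is the lower-order one and is replaced by a constant multiple of $t\rho r^{-\beta}$. Sending $\eps\downarrow0$ then yields the claimed liminf, the elementary minimisation of $a r^{d}+b r^{-\gamma}$ producing the bracketed factor $(\gamma/d)^{d/(d+\gamma)}+(d/\gamma)^{\gamma/(d+\gamma)}$.

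\emph{Upper bound.} Choose $R=R(t)$ with $\log R$ of order $t^{d/(d+\alpha)}$ (resp.\ $t^{d/(d+\beta)}$) and split \eqref{e:an2} according to whether $Z$ has left the cube $Q:=[-R,R]^d$ by time $t$. On the exit event the contribution is at most $\Pp_x(\sup_{s\le t}|Z_s-x|>R)\le C\,tR^{-\alpha}$ (valid since $R\gg t^{1/\alpha}$), which is already of size $\exp(-c\,t^{d/(d+\alpha)})$ for this choice of $R$; on the complementary event, \eqref{e:an2} is dominated by a constant multiple of $e^{-t\lambda^\w(Q)}$, where $\lambda^\w(Q)$ is the principal Dirichlet eigenvalue of $-L+V^\w$ on $Q$. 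The heart of the matter is a quenched lower bound $\lambda^\w(Q)\ge c\,(\log R)^{-\alpha/d}$ (resp.\ with exponent $\beta/d$): $\Q$-a.s.\ for large $R$ no ball in $Q$ of radius $\asymp(\log R)^{1/d}$ avoids the obstacles, and this ``no large hole'' property is turned into a genuine spectral gap by running Sznitman's method of enlargement of obstacles for the non-local generator $L$, using the sharp heat-kernel and exit-time estimates for $Z$. Optimising the implicit proportionality constant in the choice of $R$ then gives the stated limsup, with a constant that is not claimed to be optimal (the slack being absorbed into the enlargement step).

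\emph{Main difficulty.} The delicate ingredient is precisely this last quenched spectral estimate. A single obstacle with shape $\varphi(x)=1\wedge|x|^{-d-\beta}$ produces a bounded potential bump rather than a hard wall, so the classical hard-obstacle arguments do not transfer verbatim; one has to run the enlargement-of-obstacles machinery with soft, suitably truncated potentials in the jump setting, and in the heavy-tailed regime (ii) keep careful track of the tail of $\varphi$, where --- just as in the annealed analysis behind \eqref{e:a2} --- the governing constant rests on the identity $\int_{\R^d}\big(1-e^{-|z|^{-d-\beta}}\big)\,dz=w_d\,\Gamma\!\big(\tfrac{\beta}{d+\beta}\big)$. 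A secondary, organisational issue is to make all the required Poissonian-field statements --- existence of clearings at the stated distances, absence of large clearings inside $Q$, control of the tail sums defining $V^\w$ --- hold $\Q$-a.s.\ simultaneously for every large $t$ and, via a short-range comparison of $u^\w(t,\cdot)$, for all starting points $x$.
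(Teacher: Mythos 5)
Your high-level architecture matches the paper's: split \eqref{e:an2} into the exit event and the eigenvalue estimate for the upper bound; find a clearing and use a heat-kernel/eigenfunction lower bound for the lower bound; and exploit that the $\alpha$-stable heat kernel lower bound $p^{B(0,2r)}(\delta,0,z)\gtrsim\delta r^{-d-\alpha}$ gives only polynomial cost for relocation, so the quenched scale coincides with the annealed one. Your elementary minimization of $ar^d+br^{-\gamma}$ and the resulting bracketed factor are also exactly what the paper does in Section~4 after feeding Propositions~\ref{L:lim-u}, \ref{P:prol}, \ref{L:lem-l3} into Theorems~\ref{T:upper} and \ref{T-low}. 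However, there are two substantive gaps.

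\textbf{The heavy-tailed constant is not actually derived.} In part (ii), both the upper and lower bound constants involve $\Gamma\!\big(\tfrac{\beta}{d+\beta}\big)$ in a form that a hard clearing cannot produce. A ball of radius $r$ completely free of obstacles bounds the residual potential on its interior by roughly $\rho w_d r^{-\beta}/\beta$, not by $q_1 r^{-\beta}$ with $q_1$ as in \eqref{q_1}; the hard-clearing strategy is strictly suboptimal when $\beta<\alpha$. You do note in passing that the clearing ``has to be replaced by the extremal density profile'' of the field, but this replacement \emph{is} the hard part and you give no argument for it. In the paper it occupies the entire appendix Proposition~\ref{P:5.1}: one tilts $\Q$ by $\exp(-\rho_0(\lambda(t))V^\w(0))$, shows (Lemma~\ref{L:lem5.1}) that under $\tilde\Q_t$ the field concentrates around $\lambda(t)=q_1(\log t)^{-\beta/d}$, and then uses a Gaussian fluctuation estimate and Borel--Cantelli over a grid to find a region where $V^\w\approx\lambda(t)$ uniformly. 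Without that, the claimed liminf constant $-(d+\alpha)^{\beta/(d+\beta)}[\cdots]A_2$ in (ii) is unjustified. The same issue afflicts your upper bound in (ii): your ``no-large-hole'' heuristic explains the order $(\log R)^{-\beta/d}$ but not the precise Lifshitz constant $k_0$, which again involves $\Gamma\!\big(\tfrac{\beta}{d+\beta}\big)$ and, in the paper, is read off from \^{O}kura's density-of-states asymptotics \eqref{e:con-}.

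\textbf{The upper spectral bound is obtained by a different (and problematic) route.} The paper never invokes Sznitman's method of enlargement of obstacles. Instead it imports \^{O}kura's Lifshitz-tail for $N(\lambda)$ and then uses Fukushima's superadditivity argument (cf.\ \eqref{e:density}--\eqref{e:lim-u}) to convert it into a quenched lower bound for $\lambda_{V^\w,B(0,R)}$. Your proposal to ``run Sznitman's method of enlargement of obstacles for the non-local generator $L$'' is not a routine step: MEO for pure-jump generators in the form needed here is not available off the shelf, and constructing it for soft, slowly decaying potentials in the jump setting would be a separate paper. Since the IDS/Lifshitz route is both available and what the theorem's constants actually come from, that is the path you should be taking; as written, your upper-bound step rests on machinery that does not yet exist in the required generality.
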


\begin{theorem}\label{Thm1.2} Suppose that the L\'evy measure $\nu(dz)=\rho(|z|)\,dz$ satisfies
$$\rho(|z|)\asymp \frac{1}{|z|^{d+\alpha}}\I_{\{|z|\le 1\}}+ e^{-c|z|^\theta}\I_{\{|z|> 1\}}$$ for some $\alpha\in (0,2)$, $\theta\in (0,\infty]$ and $c>0$, where $f\asymp g$ means that there is a constant $c_0
\ge 1$ such that $c_0^{-1} g\le f\le c_0f.$  Then,
\begin{itemize}
\item[(i)] When $\beta\in (2,\infty]$, $\Q$-almost surely for all $x\in \R^d$,
$$ \lim_{t\to\infty} \frac{u^\w(t,x)}{t/(\log t)^{2/d}}= -\left(\frac{\rho w_d(1\wedge \theta)}{d}\right)^{2/d} \lambda_1^{(2)}(B(0,1)), $$ where $\lambda_1^{(2)}(B(0,1))$ is the first Dirichlet eigenvalue for the killed Brownian motion when exiting the ball $B(0,1)$ and with the covariance matrix $(a_{ij})_{1\le i,j\le d}$ as follows
$$a_{ij}=\int_{\R^d\backslash \{0\}}z_iz_j\,\nu(dz),\quad 1\le i,j\le d.$$

\item[(ii)] When $\beta\in (0,2)$, $\Q$-almost surely for all $x\in \R^d$,
$$ \lim_{t\to\infty} \frac{u^\w(t,x)}{t/(\log t)^{\beta/d}}=  -\frac{d}{d+\beta} \left(\frac{\beta(1\wedge \theta)}{d(d+\beta)}\right)^{\beta/d}\left[\rho w_d\Gamma\left(\frac{\beta}{d+\beta}\right)\right]^{(d+\beta)/d}.$$\end{itemize}
\end{theorem}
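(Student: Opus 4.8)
\emph{Overall strategy.} The plan for proving Theorem~\ref{Thm1.2} is to use the Feynman--Kac representation \eqref{e:an2} together with the ``single favourable pocket'' heuristic of Donsker--Varadhan/Sznitman type, adapted to the jump setting: the dominant contribution to $u^\w(t,x)$ comes from paths of $Z$ that move from $x$ into a favourable region within a short initial time and then remain there for essentially all of $[0,t]$. Two features of $Z$ determine the outcome. First, because $\nu$ has finite second moment, on scales $R\to\infty$ the process $Z$ is asymptotically a Brownian motion with covariance $(a_{ij})$, so the principal Dirichlet eigenvalue of $L$ on $B(0,R)$ satisfies $\lambda_1^{Z}(B(0,R))\sim R^{-2}\lambda_1^{(2)}(B(0,1))$; this is why only the Brownian eigenvalue, and none of the small-scale $\alpha$-stable structure, enters. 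Second, because the jump density is $\asymp e^{-c|z|^\theta}$, the cost of travelling to a region at distance $L$ within time $t$ is negligible on the relevant scale when $\theta\ge1$ (a moderate-deviation bound), but is governed by a single large jump, of order $t\,e^{-cL^\theta}$, when $\theta<1$. Consequently the key quantity is the largest radius $R=\Lambda(t)$ of an admissible pocket: a ball of radius $R$ carrying the required depletion of $\mu^\w$ (a complete clearing of Poissonian points in part (i); a Gibbs-type thinning in part (ii)) occurs with $\Q$-probability $\approx e^{-\rho w_d R^{d}}$, so the nearest one is at distance $\approx e^{\rho w_d R^{d}/d}$, which must lie within the reachable range --- distance $\approx t$ if $\theta\ge1$, distance $\approx t^{1/\theta}$ if $\theta<1$; balancing these forces $\rho w_d\Lambda(t)^{d}=\tfrac{d}{1\wedge\theta}\log t\,(1+o(1))$, which is the source of the factor $1\wedge\theta$. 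For part (i), since $\beta>2$ the residual potential on a clearing of radius $\Lambda(t)$ is negligible compared with the confinement cost, so $\log u^\w(t,x)\sim-t\,\lambda_1^{Z}(B(0,\Lambda(t)))$, which is the stated limit. For part (ii), since $\beta<2$ the confinement cost is negligible, and the leading cost is $t$ times the bottom of the spectrum of $-L+V^\w$ on the pocket; optimizing over pockets reduces to the Donsker--Varadhan variational problem behind \eqref{e:a2}, now run under the reachability budget $\tfrac{d}{1\wedge\theta}\log t$ --- this is Fukushima's computation from \cite{Fuk2}, and it produces the stated constant, with $\Gamma\!\big(\tfrac{\beta}{d+\beta}\big)$ arising exactly as in \eqref{e:a2}.

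\emph{Lower bounds.} I would first show, via a second-moment estimate for the number of admissible pockets inside a box of side comparable to the reachable range together with the Borel--Cantelli lemma along $t_n=e^{n}$, that $\Q$-a.s.\ for all large $t$ there is a ball $B(z_t,R_t)$ with $R_t=(1-\eps)\Lambda(t)$ realizing the required depletion and with $z_t$ in the reachable range of $x$. Then I would bound \eqref{e:an2} from below by restricting the expectation to paths that enter $B(z_t,R_t)$ during a short initial time --- by a single large jump when $\theta<1$ (its intensity estimated via $\rho(|z|)\asymp e^{-c|z|^\theta}$), diffusively when $\theta\ge1$ --- and thereafter stay in $B(z_t,R_t)$; the ``staying'' probability is bounded below by the Dirichlet heat kernel of $Z$ killed on exiting $B(z_t,R_t)$, whose principal eigenvalue is $\sim R_t^{-2}\lambda_1^{(2)}(B(0,1))$ by the large-scale Gaussian behaviour of $Z$ (to be extracted from the heat-kernel estimates discussed in Subsection~\ref{section2.1}), while the potential accumulated along this excursion is negligible in case (i) and close to its optimal value in case (ii). Letting $\eps\downarrow0$ yields the lower bounds.

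\emph{Upper bounds; the main obstacle.} This is the harder direction, and I would carry it out by a quenched enlargement-of-obstacles scheme. By Borel--Cantelli again, $\Q$-a.s.\ for all large $t$ there is no admissible pocket of radius $(1+\eps)\Lambda(t)$ within the reachable range; coarse-graining $\R^d$ into mesoscopic cubes, outside a $\Q$-sparse family of ``good'' cubes the killed semigroup on a cube already contributes a negligible factor, the good cubes being separated by distances governed by the same Poissonian rate $\rho w_d$. Decomposing \eqref{e:an2} according to the good cube in which $Z$ spends the bulk of $[0,t]$, the contribution of a good cube is at most $e^{-t\,\lambda_1^{Z}(\mathrm{cube})}\le e^{-t(1-o(1))((1+\eps)\Lambda(t))^{-2}\lambda_1^{(2)}(B(0,1))}$ in part (i), and the probability that $Z$ ever reaches a good cube by time $t$ is at most $Ct\,e^{-cL^\theta}$ when $\theta<1$ (a single-large-jump bound on the hitting probability) resp.\ a factor of order $e^{-t\,o(1)/(\log t)^{2/d}}$ when $\theta\ge1$. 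Summing over good cubes, optimizing the mesh, and letting $\eps\downarrow0$ gives the upper bounds. The genuinely delicate points, where most of the work lies, are: (a) controlling the nonlocality, so that excursions of $Z$ out of a pocket (where $V^\w$ is comparatively large) cannot be exploited and the \emph{nonlocal} principal eigenvalue on $B(0,R)$ is genuinely asymptotic to the Brownian value $R^{-2}\lambda_1^{(2)}(B(0,1))$ with covariance $(a_{ij})$; this requires sharp two-sided heat-kernel estimates for $Z$ and a Meyer-type splitting of $Z$ into a part with jumps bounded by $1$ and a compound-Poisson part of larger jumps, and it exploits that jumps out of a pocket of radius $R$ occur at rate $\nu(\{|z|>R\})\asymp e^{-cR^\theta}=o(R^{-2})$ --- in sharp contrast with the $\alpha$-stable case of Theorem~\ref{Th-1}; (b) for $\theta<1$, the single-large-jump hitting estimate $\Pp_x(Z\ \text{hits}\ B(z,r)\ \text{before time}\ t)\asymp t\,\nu(B(z-x,r))$ for $|z-x|$ large, uniformly over the relevant range, since this is precisely what caps the admissible radius at $\rho w_d\Lambda(t)^{d}=\tfrac{d}{\theta}\log t$; and (c) in part (ii), a uniform Poissonian large-deviation upper bound showing that no local configuration within the reachable range beats the Gibbs-type profile by more than a $(1+o(1))$ factor, from which the Donsker--Varadhan constant carrying $\Gamma\!\big(\tfrac{\beta}{d+\beta}\big)$ emerges.
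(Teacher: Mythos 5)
Your heuristic picture is sound: you correctly identify the scales, the origin of the $1\wedge\theta$ factor (balancing the clearing distance $\approx\exp(\rho w_d\Lambda(t)^d/d)$ against the reachable range $\approx t^{1/(1\wedge\theta)}$), and why only the Brownian-scale eigenvalue appears in part (i) and Fukushima's variational constant in part (ii). Your lower-bound route --- locate a favourable pocket by Borel--Cantelli, insert the path, bound the stay probability by the Dirichlet heat kernel --- matches the paper (Propositions~\ref{P:prol} and \ref{L:lem-l3}, Theorem~\ref{T-low}), except that a first-moment Borel--Cantelli over a grid already suffices and no second-moment estimate is used. The upper bound, however, is where your route genuinely diverges. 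You propose a Sznitman-type enlargement-of-obstacles / coarse-graining scheme and correctly flag its main difficulties in the nonlocal setting (excursion control via Meyer splitting, etc.). The paper sidesteps that machinery entirely: it uses the one-ball spectral bound of Proposition~\ref{L:lim-u}, $u^\w(t,0)\le\Pp_0(\tau_{B(0,R)}\le t)+C|B(0,R)|^{1/2}e^{-(t-\delta/2)\lambda_{V^\w,B(0,R)}}$, and then lower-bounds $\lambda_{V^\w,B(0,R)}$ by $(1-\varepsilon)\bigl(k_0/(d\log R)\bigr)^{(\alpha\wedge\beta)/d}$ for all $R\ge R_\varepsilon(\w)$, a quenched statement taken from Fukushima's Borel--Cantelli argument in \cite{Fu}, which in turn rests on \^{O}kura's integrated-density-of-states asymptotics \cite{Ok1} --- that is, on the annealed results \eqref{e:a0}--\eqref{e:a2}. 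The exit term $\Pp_0(\tau_{B(0,R)}\le t)$ is handled by Lemma~\ref{L:3.2} together with the off-diagonal heat-kernel bounds of \cite{CKK}, and the Dirichlet heat-kernel lower bound $\Psi_\delta$ is imported from \cite{PP}; the optimal choice $R\asymp t^{1/(1\wedge\theta)}$ then delivers the $1\wedge\theta$ factor through $\log R$. In short, the two approaches differ most where you anticipated the hardest work: the paper buys a light upper-bound argument by invoking the IDS/annealed asymptotics as input, whereas your obstacle-enlargement scheme would have to rebuild the quenched eigenvalue lower bound from scratch in the nonlocal setting.
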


Theorems \ref{Th-1} and \ref{Thm1.2} show that the quenched asymptotics of $u^\w(t,x)$ for pure-jump symmetric L\'evy process in  Poissonian potentials depends not only on the shape function $\varphi$ in the potential, but also properties of the L\'evy measure (for large jumps) of the L\'evy process $Z$. This phenomenon happens for the annealed asymptotics of $u^\w(t,x)$, but there is much more involved in the quenched asymptotics. For instance, considering the example with $\theta\in (0,1)$ in Theorem \ref{Thm1.2} which satisfies \eqref{e:a1} with $\alpha=2$, in the light tailed case the precise value of the annealed asymptotics of $u^\w(t,x)$ is independent of $\theta$ by \eqref{e:a0-1}, but that of the quenched asymptotics of $u^\w(t,x)$ does depend on $\theta$ by Theorem \ref{Thm1.2}(i). The same occurs for the heavy tailed case. On the other hand, in both light tailed and heavy tailed cases, for rotationally  symmetric $\alpha$-stable processes, by Theorem \ref{Th-1} the correct order of the quenched asymptotics of $u^\w(t,x)$ is the same as that of the annealed asymptotics; however, according to Theorem \ref{Thm1.2}, it is not true for symmetric L\'evy processes with exponential decay for large jumps; see \cite[Section 1]{KP} for more discussions on this point in the light tailed setting.

Next, we briefly make  comments on our proofs for the quenched asymptotics of $u^\w(t,x)$ for pure-jump symmetric L\'evy process in general Poissonian potentials.
\begin{itemize}
\item[(i)] Compared with \cite{KP}, the crucial  ingredient to handle general light tailed cases is the observation that, due to the light tail of the potential, $ V^\w(x)$ is compared with $\tilde V^\w(x)$ whose associated shape function is compactly supported. This enables us to use the classical approach in \cite{Sz,KP}; that is, when the shape function has compact support, $\Q$-almost surely there exists a large area where the potential is zero and so the principle Dirichlet eigenvalue of the process $Z$ itself is naturally involved in the quenched asymptotics of $u^\w(t,x)$.  When $\beta=\infty$ (this is just the case that the shape function $\varphi$ has compact support), Theorems \ref{Th-1}(i) and \ref{Thm1.2}(i) have been proven in \cite{KP}; see \cite[Table 1 in p.\ 165]{KP} for more details. Based on this and the strategy of the approach mentioned above, we believe that assertions of \cite{KP} should hold true for all light tailed cases.
\item[(ii)] In heavy tailed cases, the potential $V^\w(x)$ will play a dominated role in the  quenched asymptotics of $u^\w(t,x)$. Similar to the Brownian motion case studied in \cite{Fuk2}, it is natural to expect that the main contribution of $u^\w(t,x)$ defined by \eqref{e:an2} comes from the process $Z$ which spends most of the time in the area where $V^\w(x)$ takes small value. Motivated by the fact, we partly adopt the argument in \cite{Fuk2} to treat upper bounds of the principle Dirichlet eigenvalue for the random Schr\"{o}dinger operator associated with the equation \eqref{e:an1}, which in turn yield explicit  quenched asymptotics of $u^\w(t,x)$ in general heavy tailed setting.
    \item[(iii)] To consider quenched asymptotics for pure-jump symmetric L\'evy processes in both light tailed and heavy tailed potentials at the same time, we give an unified approach which is inspired by \cite{CX} (which studied quenched asymptotics for Brownian motions in renormalized Poissonian potentials) and based on recent development on (Dirichlet) heat kernel estimates for symmetric jump processes. We emphasize that the argument of lower bounds for quenched asymptotics of $u^\w(t,x)$ here is different from that in \cite{KP}. In particular,  the lower bound for the quenched asymptotics of $u^\w(t,x)$ in Theorem \ref{Th-1}(i) for symmetric rotationally $\alpha$-stable process slightly improves that in \cite{KP}; see \cite[Remark 5.1(4)]{KP}.
\end{itemize}

We further mention that our main results for quenched estimates of $u^\w(t,x)$ hold (see Theorems \ref{T:upper} and \ref{T-low}) for pure-jump symmetric L\'evy process in general Poissonian potentials, so the results should apply various examples discussed in \cite[Section 5]{KP}. It is also possible to extend them to symmetric L\'evy processes with non-degenerate Brownian motion as done in \cite{KP}, and the details are left to interested readers.
Instead, to highlight the power of our approaches, we will present the quenched estimates of $u^\w(t,x)$ with critical potentials (for example, $\varphi(x)= 1\wedge |x|^{-d-\alpha}$ with $\alpha$ being in \eqref{e:a1}) in the appendix. Specially, we can prove that
\begin{proposition}\label{P:cre}
\begin{itemize}
\item[(i)] Let $Z$ be a rotationally  symmetric $\alpha$-stable process on $\R^d$ with $\alpha\in (0,2)$, and $\varphi(x)=1\wedge |x|^{-d-\alpha}$.  Then,
$\Q$-almost surely for all $x\in \R^d$,
$$-\infty  <\liminf_{t\to\infty} \frac{u^\w(t,x)}{t^{d/(d+\alpha)}}
 \le  \limsup_{t\to\infty} \frac{u^\w(t,x)}{t^{d/(d+\alpha)}}<0.$$

 \item[(ii)] Let $Z$ be a pure-jump rotationally symmetric L\'evy process given in Theorem $\ref{Thm1.2}$, and $\varphi(x)=1\wedge |x|^{-d-2}$.  Then,
$\Q$-almost surely for all $x\in \R^d$,
$$-\infty < \liminf_{t\to\infty}\frac{u^\w(t,x)}{t/(\log t)^{2/d}}
 \le  \limsup_{t\to\infty} \frac{u^\w(t,x)}{t/(\log t)^{2/d}}<0.$$\end{itemize}\end{proposition}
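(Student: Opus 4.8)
To prove Proposition \ref{P:cre} I would establish the two inequalities separately, recycling the ingredients behind Theorems \ref{Th-1} and \ref{Thm1.2} (equivalently, Theorems \ref{T:upper} and \ref{T-low}); since in the critical case only the \emph{order} of $\log u^\w(t,x)$ is claimed, the work is strictly less than in the exact cases. Put $\beta_c=\alpha,\ g(t)=t^{d/(d+\alpha)}$ in case (i) and $\beta_c=2,\ g(t)=t/(\log t)^{2/d}$ in case (ii), so that the assertion reads: $\Q$-a.s.\ for all $x$, $-\infty<\liminf_{t\to\infty}g(t)^{-1}\log u^\w(t,x)\le\limsup_{t\to\infty}g(t)^{-1}\log u^\w(t,x)<0$. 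For the upper bound, note that $\varphi(x)=1\wedge|x|^{-d-\beta_c}\ge\I_{\{|x|\le1\}}$ on $\R^d$, whence $V^\w\ge\widetilde V^\w$ pointwise and $u^\w(t,x)\le\widetilde u^\w(t,x)$, where $\widetilde u^\w$ is attached to the compactly supported shape function $\I_{\{|x|\le1\}}$. The latter is exactly the $\beta=\infty$ member of the family $1\wedge|x|^{-d-\beta}$, so Theorem \ref{Th-1}(i) in case (i) (resp.\ Theorem \ref{Thm1.2}(i) in case (ii)), taken at $\beta=\infty$, gives $\limsup_{t\to\infty}g(t)^{-1}\log\widetilde u^\w(t,x)<0$ $\Q$-a.s.\ for all $x$, and the same bound passes to $\log u^\w(t,x)$.

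For the lower bound I would run a ``single clearing'' argument and, since only a finite constant is needed, keep it crude. Fix a small $\eps>0$ and set $r_t=\eps\,t^{1/(d+\alpha)}$ in case (i), $r_t=\eps(\log t)^{1/d}$ in case (ii); then $t\,r_t^{-\beta_c}=\eps^{-\beta_c}g(t)$ and $\log r_t=o(g(t))$. First, by the standard largest-empty-ball estimate (packing plus Borel--Cantelli), $\Q$-a.s.\ for all large $t$ there is $z_t\in\R^d$ with $\mu^\w(B(z_t,2r_t))=0$ and $|z_t|\le e^{\kappa r_t^d}$ for some $\kappa=\kappa(d,\rho)$. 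Second, since $B(z_t,2r_t)$ carries no Poisson point, a triangle-inequality bound gives, for every $x\in B(z_t,r_t)$, $V^\w(x)\le 2^{d+\beta_c}\!\int_{\{|y-z_t|\ge 2r_t\}}|y-z_t|^{-d-\beta_c}\mu^\w(dy)$; this is a Poissonian integral of mean $\asymp\rho\,r_t^{-\beta_c}$ of a function bounded by $(2r_t)^{-d-\beta_c}$, hence with exponential tails at scale $\asymp r_t^{d+\beta_c}$, so a union bound over an $r_t$-net of $B(0,e^{\kappa r_t^d})$ followed by Borel--Cantelli yields, for a suitably large $C_*$, $\sup_{x\in B(z_t,r_t)}V^\w(x)\le C_*\rho\,r_t^{-\beta_c}$ $\Q$-a.s.\ for all large $t$.

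Third, fix $x$ and restrict the expectation in \eqref{e:an2} to the event that $Z$ stays in $B(x,1)$ on $[0,\tfrac12]$, makes one jump into $B(z_t,r_t/2)$ during $(\tfrac12,1]$, and then stays in $B(z_t,r_t)$ on $[1,t]$. On this event $\int_0^t V^\w(Z_s)\,ds\le O(1)+(t-1)C_*\rho\,r_t^{-\beta_c}$; the jump has probability $\gtrsim\nu(B(z_t-x,r_t/4))$, whose logarithm is $\ge-(d+\alpha)\kappa r_t^d-o(g(t))$ in case (i) and $\ge-c\,t^{\kappa\theta\eps^d}-o(g(t))$ in case (ii), the first term of which is itself $o(g(t))$ once $\eps$ is small enough that $\kappa\theta\eps^d<1$; and the Dirichlet heat-kernel lower bound on $B(z_t,r_t)$ (classical for the stable process, and furnished by the cited jump-process estimates in case (ii)) gives, for $y\in B(z_t,r_t/2)$, $\Pp_y(\tau_{B(z_t,r_t)}>t)\ge c\,r_t^{-d/2}e^{-t\lambda_1^Z(B(z_t,r_t))}$ with $\lambda_1^Z(B(z_t,r_t))\le C\,r_t^{-\alpha}$ in case (i) and $\le C\,r_t^{-2}$ in case (ii) (the latter by inserting a rescaled Brownian test function in the Dirichlet form of $Z$, using that $\nu$ has finite second moment and exponentially light tails). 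Multiplying these through via the Markov property gives $\log u^\w(t,x)\ge-M(\eps)\,g(t)(1+o(1))$ with $M(\eps)<\infty$, so $\liminf_{t\to\infty}g(t)^{-1}\log u^\w(t,x)\ge-M(\eps)>-\infty$; since the exceptional $\Q$-null set is independent of $x$ and the construction is uniform for $x$ in compacts, this holds $\Q$-a.s.\ for all $x\in\R^d$ simultaneously.

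The hard part is the simultaneous tuning of the scales: the box radius $e^{\kappa r_t^d}$ must be large enough for the clearing to exist, small enough that the union bound in the second step still beats the tail of the Poissonian potential (forcing $C_*$ large), and small enough that $\nu(B(z_t-x,r_t/4))^{-1}$ — the cost of reaching the clearing in unit time — stays $O(g(t))$ in case (i) and $o(g(t))$ in case (ii) (forcing $\eps$ small); all of these are compatible, but only by a margin. A secondary point, specific to case (ii), is the eigenvalue/survival estimate on the growing ball $B(z_t,r_t)$, which rests on the near-Gaussian behaviour of $Z$ at scale $r_t\to\infty$; because only a finite constant — not the sharp $\lambda_1^{(2)}(B(0,1))$ — is required here, a soft test-function and heat-kernel comparison suffices, which is exactly why the precise constant is (and must be) left undetermined.
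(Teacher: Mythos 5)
Your upper bound argument is valid and is genuinely simpler than the paper's route. The paper deduces the bound via \^{O}kura's critical-case density-of-states asymptotics (\cite[Theorem~6.4]{Ok1}) fed through Theorem~\ref{T:upper-c}, whereas you observe $1\wedge|x|^{-d-\beta_c}\ge\I_{\{|x|\le1\}}$, compare potentials, and invoke the already-known $\beta=\infty$ case of Theorems~\ref{Th-1}(i) and~\ref{Thm1.2}(i); since only $\limsup<0$ is claimed, that domination suffices (modulo the minor point that the indicator is not continuous, which one handles by replacing it with a continuous minorant supported in $B(0,1)$ and applying Theorem~\ref{T:upper} directly). This buys transparency and shows the critical upper bound needs no new spectral input beyond the light-tailed case.

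Your lower bound, however, has a genuine gap in case (ii). The clearing construction (steps one and two) is essentially the content of the paper's Proposition~\ref{L:lem-l4}, and that part is fine. The problem is in step three, where the path reaches the clearing by \emph{a single jump} at cost $\log\nu(B(z_t-x,r_t/4))$. In case (ii) the L\'evy density of large jumps is $\asymp e^{-c|z|^\theta}$ for $|z|>1$ with $\theta\in(0,\infty]$. When $\theta=\infty$ the L\'evy measure has support in $B(0,1)$, so $\nu(B(z_t-x,r_t/4))=0$ once $|z_t-x|>1+r_t/4$, and the single-jump probability is identically zero — the argument collapses. Even for finite $\theta>1$, the one-jump cost $\asymp-c\,|z_t|^\theta$ is far larger than the true cost of reaching the clearing, which is $\asymp-c\,|z_t|^{1\wedge\theta}(\log|z_t|)^{(\theta-1)^+/\theta}$ (this is the ``many medium jumps'' mechanism), so your $M(\eps)$ only stays finite because you shrink $\eps$, obscuring what is really going on. The paper avoids this by replacing the reaching step with the \emph{off-diagonal Dirichlet heat kernel lower bound} $\Psi_\delta(r)$ from \eqref{e:low}, instantiated via \cite[Theorem~1.1]{PP} as $\Psi_\delta(r)\gtrsim\exp\bigl(-c\,r^{1\wedge\theta}(\log r)^{(\theta-1)^+/\theta}\bigr)$, which is positive for every $\theta\in(0,\infty]$ and feeds directly into Proposition~\ref{P:prol}/Theorem~\ref{T-low-c}. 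To repair your proof, replace ``makes one jump into $B(z_t,r_t/2)$'' by ``is located in $B(z_t,r_t/2)$ at time $\delta$ while staying in $B(0,2M(r_t))$'', estimate that probability by $\inf_{z}p^{B(0,2M(r_t))}(\delta,0,z)\ge\Psi_\delta(2M(r_t))$, and then verify that $\log\Psi_\delta(2M(r_t))=o(g(t))$ for $\eps$ small; this is exactly the machinery of Theorem~\ref{T-low-c}, which is why the paper sets it up in that generality.
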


\ \

The rest of the paper is arranged as follows. In Section \ref{section2}, we give some preliminaries and main assumptions of our paper. Section \ref{section3} is the main part of our paper, and it is split into three subsections. In particular, after establishing quenched bounds for $u^\w(t,x)$ and estimates for the principle Dirichlet eigenvalue, we will derive general quenched estimates of $u^\w(t,x)$ in here. Section \ref{section4} is devoted to proofs of Theorems \ref{Th-1} and \ref{Thm1.2}. Finally, in the appendix we present  upper quenched bounds for the principle Dirichlet eigenvalue in the heavy tailed case, and  quenched estimates of $u^\w(t,x)$ with critical potentials.

\section{Preliminaries and Assumptions}\label{section2}
\subsection{L\'evy processes}\label{section2.1}
Let $Z:=(Z_t,\Pp_x)_{t\ge0, x\in \R^d}$ be a pure-jump symmetric L\'evy process on $\R^d$ with the characteristic exponent $\psi$ given by \eqref{e:sym}. Throughout the paper, we will assume the following two conditions hold for the exponent $\psi$:
\begin{itemize}
\item[(i)] $e^{-t\psi^{1/2}(\cdot)}\in L^1(\R^d;dx)$ for all $t>0$;
\item[(ii)] \begin{equation}\label{e:a1-1}\psi(\xi)=\psi^{(\alpha)}(\xi)+o(|\xi|^\alpha),\quad |\xi|\to0\end{equation} for some $\alpha\in (0,2]$, where
\begin{equation}\label{alpha}\psi^{(\alpha)}(\xi)=\begin{cases} \int_0^\infty \int_{\Ss^{d-1}} \frac{1-\cos(r\langle \xi,z\rangle)}{r^{1+\alpha}}\,\mu(dz)\,dr,&\quad \alpha\in (0,2),\\
\sum_{1\le i,j\le d} a_{ij}\xi_i\xi_j,&\quad \alpha=2\end{cases}\end{equation} with $\mu$ being a symmetric finite measure on the unit sphere $\Ss^{d-1}$ and $(a_{ij})_{1\le i,j\le d}$ being a symmetric non-negative definite matrix. Moreover, $\inf_{|\xi|=1}\psi^{(\alpha)}(\xi)>0, $ and, for each $\delta, r>0$,
$$\sum_{\xi\in r\Z^d}\exp(-\delta \psi^{(\alpha)}_*(\xi))<\infty,$$ where  $\psi^{(\alpha)}_*(\xi)=\inf_{t\ge1}t^\alpha\psi^{(\alpha)}(t^{-1}\xi)$.
\end{itemize}
It is clear that under (i) the process $Z$ has the transition density function $p(t,x-y)=p(t,x,y)$  with respect to the Lebesgue measure such that $p(t,0)=\sup_{x\in \R^d} p(t,x)<\infty$ for all $t>0$. We further suppose that $p(t,x)$ is strictly positive for all $t>0$ and $x\in \R^d$.
Note that, the asymptotic condition \eqref{e:a1-1} of $\psi(\xi)$ is essentially based on the property of the L\'evy measure $\nu$ on $\{z\in \R^d: |z|>1\}$. For example, according to \cite[Proposition 5.2(i)]{KP}, if $\nu$ has finite second moment, i.e., $\int_{\{|z|>1\}}|z|^2\,\nu(dz)<\infty$, then \eqref{e:a1-1} holds with $\alpha=2$ and
$a_{ij}=\frac{1}{2}\int_{\R^d\backslash\{0\}} z_iz_j\,\nu(dz).$

In this paper, we always let $D$ be a bounded  domain (i.e.,\ connected open set) of $\R^d$. Let $Z^D:=(Z_t^D, \Pp^x)_{t\ge 0,x\in D}$ be the subprocess of $Z$ killed upon exiting $D$. Then, $Z^D$ has the transition density function
$$p^D(t,x,y)=p(t,x,y)-\Ee_x \left(p(t-\tau_D,Z_{\tau_D},y)\I_{\{\tau_D\le t\}}\right),\quad t>0, x,y\in D,$$ where $\tau_D=\inf\{t>0:Z_t\notin D\}$. Denote by $(P_t^D)_{t\ge0}$ the Dirichlet semigroup associated with the process $Z^D$. Since $D$ is bounded and $p^D(t,x,y)\le p(t,x,y)=p(t,x-y)\le p(t,0)<\infty$ for all $t>0$ and $x,y\in D$, the operators $P_t^D$ are compact and admit a sequence of positive eigenvalues
$$0<\lambda_1(U)<\lambda_2(U)\le \lambda_3(U)\le \cdots \to \infty.$$ When $Z$ is a symmetric $\alpha$-stable process with $\alpha\in (0,2]$, the eigenvalues will be denoted by
$ \lambda_i^{(\alpha)}(U)$ for $i\ge1$.

\subsection{Random potential} \label{section2.2}Consider the random potential $V^\w$ given by \eqref{e:pot}, which can be written as
$$V^\w(x)=\sum_{i}\varphi(x-\w_i),\quad x\in \R^d,$$ and the points $\{\w_i\}$ are from a realization of a homogeneous Poisson point process in $\R^d$ with parameter $\rho>0$. In this paper, we assume that the non-negative shape function
$\varphi$ is continuous, and satisfies
\begin{equation}\label{e:func}\int_{\R^d} \left(e^{\bar\varphi(x)}-1\right)\,dx<\infty,\end{equation} where $\bar\varphi(x)=\sup_{z\in B(x,1)}\varphi(z)$. Then, following the proof of \cite[Lemma 5]{Fuk2}, we know that
$\Q$-almost surely there is $r(\w)>0$ such that for all $r\ge r(\w)$,
\begin{equation}\label{e:upper-v}\sup_{x\in B(0,r)}V^\w(x)\le 3d \log r.\end{equation} A typical example that satisfies \eqref{e:func} is the function $\varphi(x)=K(1\wedge |x|^{-d-\theta})$ for some positive constants $K$ and $\theta$. Indeed, if there are constants $c_0,\theta>0$ such that
$$\varphi(x)\le\frac{ c_0}{(1+|x|)^{d+\theta}},\quad x\in \R^d,$$ then, according to \cite[Lemma 2.1]{CM}, we even have that $\Q$-almost surely there is  $r(\w)>0$ so that for all $r\ge r(\w)$,
$$\sup_{x\in B(0,r)}V^\w(x)\le c\left(1+\frac{\log r}{\log \log r}\right),$$ where $c>0$ is independent of $r(\w)$ and $r$. In particular, \eqref{e:upper-v} yields that for $\Q$-almost surely,
$V^\w$ belongs to the local Kato class relative to the process $Z$, i.e., $\Q$-almost surely,
$$\lim_{t\to 0}\sup_{x\in \R^d}\int_0^t \Ee_x(V^\w(Z_s)\I_{\{Z_s\in B(0,R)\}})\,ds=0$$ for all $R>0.$

\subsection{Feynman-Kac semigroup}  Since, $\Q$-almost surely,
$V^\w$ belongs to the local Kato class relative to the process $Z$, we can well define the random Feynman-Kac semigroups $(T_t^{V^\w})_{t\ge0}$ and $(T_t^{V^\w,D})_{t\ge0}$ as follows
\begin{align*}T_t^{V^\w}f(x)=&\Ee_x\left[f(Z_t)e^{-\int_0^t V^\w(Z_s)\,ds}\right],\quad f\in L^2(\R^d;dx),t>0,\\
T_t^{V^\w, D}f(x)=&\Ee_x\left[f(Z_t)e^{-\int_0^t V^\w(Z_s)\,ds}\I_{\{\tau_D>t\}}\right],\quad f\in L^2(D;dx),t>0.\end{align*} Under our setting, both $(T_t^{V^\w})_{t\ge0}$ and $(T_t^{V^\w,D})_{t\ge0}$ admit strictly positive and bounded symmetric kernels $p^{V^{\w}}(t,x,y)$ and  $p^{V^{\w}, D}(t,x,y)$ with respect to the Lebesgue measure  respectively, such that
$$ p^{V^{\w}}(t,x,y)\le p(t,x,y)=p(t,x-y),\quad x,y\in \R^d,t>0,$$ and
$$p^{V^{\w}, D}(t,x,y)\le p^D(t,x,y)\le p(t,x-y),\quad x,y\in D, t>0.$$

On the other hand, it is known that $(T_t^{V^\w})_{t\ge0}$ can be generated by the random nonlocal Schr\"{o}dinger operator $-H^\w$ with $H^{\w}:=-L+V^\w$, where $L$ is the infinitesimal generator of the L\'evy process $Z$. Hence, the semigroup $(T_t^{V^\w,D})_{t\ge0}$ corresponds to  the Schr\"{o}dinger operator $-H^\w$ with the Dirichlet conditions on $D^c$. In particular, the operators $T_t^{V^\w,D}$  are compact, so that $\Q$-almost surely the spectrum of the operator $-H^\w$  with the Dirichlet conditions on $D^c$ is discrete:
$$  0<\lambda_1^{V^\w,D}<\lambda_2^{V^\w,D}\le \lambda_3^{V^\w,D}\le \cdots \to \infty.$$ For simplicity, below we write $\lambda_1^{V^\w,D}$ as $\lambda_{V^\w,D}$, which will play an important role in our paper.
It further follows that
\begin{equation}\label{e:note1ss}\|T_t^{V^\w,D}\|_{L^2(D;dx)\to L^2(D;dx)}\le e^{-t\lambda_1^{V^\w,D} }=e^{-t\lambda_{V^\w,D}},\quad t>0\end{equation} and that for any  $x\in D$,
\begin{equation}\label{e:note1-}\Ee_x\left[\exp\left(-\int_0^t V^{\w}(Z_s)\,ds\right) \delta_x(Z_t):\tau_D> t\right]= \sum_{k=1}^\infty e^{-t\lambda_k^{V^\w,D}}e_k(x)^2,\quad t>0,\end{equation}  where  $\|\cdot\|_{L^2(D;dx)\to L^2(D;dx)}$ is denoted by the operator norm from $L^2(D;dx)$ to $L^2(D;dx)$, and  $\{e_k(x)\}_{k\ge1}$ are the eigenfunctions corresponding to $\{\lambda_k^{V^\w,D}\}_{k\ge1}$ respectively with $\|e_k\|_{L^2(D;dx)}=1$ for all $k\ge1$.  According to \eqref{e:note1ss}, it then holds that
\begin{equation}\label{e:note3} \int_D\Ee_x\left[\exp\left(-\int_0^t V^{\w}(Z_s)\,ds\right):\tau_D> t\right]\,dx\le |D|  e^{-t\lambda_{V^\w,D}},\quad t>0.\end{equation} Thanks to \eqref{e:note1-}, we also have
\begin{equation}\label{e:note2}e^{-t\lambda_{V^\w,D}}\le \int_D\Ee_x\left[\exp\left(-\int_0^t V^{\w}(Z_s)\,ds\right) \delta_x(Z_t):\tau_D> t\right]\,dx,\quad t>0.\end{equation}

\section{General bounds for quenched asymptotics of $u^{\omega}(t,x)$}\label{section3}
In this section, we establish general bounds for quenched asymptotics of $u^{\omega}(t,x)$. Let $Z$ be a pure-jump symmetric L\'evy process on $\R^d$ and $V^\w$ be the random potential given by \eqref{e:pot}, both of which satisfy all the assumptions in the previous section.
For the index $\alpha\in (0,2]$ given in \eqref{e:a1-1}, we will consider the following two cases.

\begin{itemize}
\item {\it light tailed case}\,\,{\rm (L)}:\,\, The shape function $\varphi$ in the random potential $V^\w(x)$ satisfies that
$$\lim_{|x|\to\infty} \varphi(x) |x|^{d+\alpha}=0.$$

\item {\it heavy tailed case}\,\,{\rm (H)}:\,\, The characteristic exponent $\psi(\xi)$ of the process $Z$ fulfills that $\psi(\xi)=O(|\xi|^\alpha)$ as $|\xi|\to0$, and there are constants $\beta\in (0,\alpha)$ and $K>0$ such that, for the shape function $\varphi$ in the random potential $V^\w(x)$, it holds that
\begin{equation}\label{:h}\lim_{|x|\to\infty} \varphi(x) |x|^{d+\beta}=K.\end{equation}

\end{itemize}

The section is split into three parts. We first show quenched bounds for $u^{\omega}(t,x)$, and then present estimates for the principle Dirichlet eigenvalue $\lambda_{V^\w,D}$. General explicit results for quenched estimates of $u^{\omega}(t,x)$ are given in Subsection \ref{sub-3.3}.

Because of the homogeneities of the L\'evy process $Z$ and the potential $V^\w$, the distribution of the quenched bounds for $u^{\omega}(t,x)$ in this section does not depend on the starting point, and so, without loss of generality, we can take $x=0$ in the proof.

\subsection{Quenched bounds for $u^{\omega}(t,0)$}
In this part, we derive some pointwise quenched bounds for $u^\w(t,0)$. Some of arguments below are motivated by those in \cite[Section 4]{CX}.
\subsubsection{\bf Upper bounds}

\begin{proposition}\label{L:lim-u} For any bounded domain $D$,  $0<\delta<t$, $R>0$ and $a>1$, and for any $\w\in \Omega$,
\begin{align*} u^\w(t,0)
 \le  \Pp_0(\tau_D \le t)
  +\min\bigg\{& p(\delta,0)^{1/2}  |D|^{1/2}\exp(-(t-\delta/2)\lambda_{V^\w,D}), \\
&
 p(\delta,0)^{1/a} |D|^{1/a}\exp(-a^{-1}(t-\delta)\lambda_{aV^\w,D}) \bigg\}.\end{align*} \end{proposition}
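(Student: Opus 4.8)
\textbf{Proof proposal for Proposition \ref{L:lim-u}.}
The plan is to decompose $u^\w(t,0)=\Ee_0\big[\exp(-\int_0^t V^\w(Z_s)\,ds)\big]$ according to whether the process exits $D$ before time $t$. We may assume $0\in D$, since otherwise $\tau_D=0$, so $\Pp_0(\tau_D\le t)=1$ and the asserted inequality is trivial. Splitting
\[u^\w(t,0)=\Ee_0\big[e^{-\int_0^t V^\w(Z_s)\,ds};\tau_D\le t\big]+\Ee_0\big[e^{-\int_0^t V^\w(Z_s)\,ds};\tau_D>t\big]\]
and using $V^\w\ge0$, the first term is bounded by $\Pp_0(\tau_D\le t)$, while the second term equals $T_t^{V^\w,D}\I_D(0)=\int_D p^{V^\w,D}(t,0,y)\,dy$. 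It therefore remains to bound this integral by each of the two quantities inside the minimum; the parameter $R>0$ plays no role here.

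For the first bound I would apply the Cauchy--Schwarz inequality on $L^2(D)$ to get $\int_D p^{V^\w,D}(t,0,y)\,dy\le\|p^{V^\w,D}(t,0,\cdot)\|_{L^2(D)}\,|D|^{1/2}$, and then write, using the semigroup (Chapman--Kolmogorov) property of the killed Feynman--Kac kernel together with $0<\delta/2<t$,
\[p^{V^\w,D}(t,0,\cdot)=T_{t-\delta/2}^{V^\w,D}\big[p^{V^\w,D}(\tfrac{\delta}{2},0,\cdot)\big].\]
The operator-norm estimate \eqref{e:note1ss} then bounds the $L^2(D)$-norm of the left-hand side by $e^{-(t-\delta/2)\lambda_{V^\w,D}}\|p^{V^\w,D}(\tfrac{\delta}{2},0,\cdot)\|_{L^2(D)}$, and by symmetry of the kernel and Chapman--Kolmogorov once more, $\|p^{V^\w,D}(\tfrac{\delta}{2},0,\cdot)\|_{L^2(D)}^2=p^{V^\w,D}(\delta,0,0)\le p(\delta,0)$. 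Combining these gives $T_t^{V^\w,D}\I_D(0)\le p(\delta,0)^{1/2}|D|^{1/2}e^{-(t-\delta/2)\lambda_{V^\w,D}}$, which is the first term.

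For the second bound I would first pass from $V^\w$ to $aV^\w$ by Jensen's inequality, using that $y\mapsto y^{1/a}$ is concave for $a>1$: since $e^{-\int_0^t V^\w(Z_s)\,ds}\I_{\{\tau_D>t\}}=\big(e^{-a\int_0^t V^\w(Z_s)\,ds}\I_{\{\tau_D>t\}}\big)^{1/a}$, one gets $\Ee_0[e^{-\int_0^t V^\w(Z_s)\,ds};\tau_D>t]\le\big(T_t^{aV^\w,D}\I_D(0)\big)^{1/a}$. To estimate $T_t^{aV^\w,D}\I_D(0)=\int_D p^{aV^\w,D}(t,0,y)\,dy$ I would peel off the first $\delta$ units of time via Chapman--Kolmogorov and use $p^{aV^\w,D}(\delta,0,z)\le p^D(\delta,0,z)\le p(\delta,0-z)\le p(\delta,0)$, so that $p^{aV^\w,D}(t,0,y)\le p(\delta,0)\int_D p^{aV^\w,D}(t-\delta,z,y)\,dz$; integrating in $y$ and applying \eqref{e:note1ss} (which holds verbatim with $V^\w$ replaced by $aV^\w$) to $T_{t-\delta}^{aV^\w,D}$ yields $T_t^{aV^\w,D}\I_D(0)\le p(\delta,0)|D|e^{-(t-\delta)\lambda_{aV^\w,D}}$, and hence $\big(T_t^{aV^\w,D}\I_D(0)\big)^{1/a}\le p(\delta,0)^{1/a}|D|^{1/a}e^{-a^{-1}(t-\delta)\lambda_{aV^\w,D}}$, which is the second term. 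Taking the minimum of the two bounds then finishes the proof. I do not foresee a genuine obstacle; the only point deserving care is checking that the symmetry, positivity, semigroup property and $L^2$-boundedness of the killed Feynman--Kac kernels $p^{V^\w,D}$ and $p^{aV^\w,D}$ recorded in Section~\ref{section2} legitimize all the Chapman--Kolmogorov and operator-norm manipulations above.
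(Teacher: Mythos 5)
Your proof is correct and follows the same overall architecture as the paper's: decompose on $\{\tau_D\le t\}$ versus $\{\tau_D>t\}$, and for the surviving part use (a) Cauchy--Schwarz together with \eqref{e:note1ss} to get the first term in the minimum, and (b) a convexity inequality to pass to $aV^\w$ plus a first-step decomposition plus \eqref{e:note3} to get the second term. Your first bound is essentially identical to the paper's (you apply Cauchy--Schwarz to $\int_D p^{V^\w,D}(t,0,y)\,dy$ before invoking the semigroup property, the paper applies it after — same computation). For the second bound your route is a small but genuine variant: you use Jensen's inequality ($y\mapsto y^{1/a}$ concave) to pass directly from $V^\w$ to $aV^\w$ over the whole interval $[0,t]$, then peel off $[0,\delta]$ via Chapman--Kolmogorov on the killed Feynman--Kac kernel; the paper instead uses H\"older with conjugate exponents $a,b$ to split the exponential at time $\delta$, discards the $[0,\delta]$ factor by $V^\w\ge 0$, and peels off $[0,\delta]$ via the Markov property. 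The two are morally equivalent — your intermediate quantity $\Ee_0[e^{-a\int_0^t V^\w};\tau_D>t]^{1/a}$ is actually a hair smaller than the paper's $\Ee_0[e^{-a\int_\delta^t V^\w};\tau_D>t]^{1/a}$, but the gain evaporates once both discard the potential on $[0,\delta]$ through the kernel bound $p^{aV^\w,D}(\delta,0,z)\le p(\delta,0)$ versus $p^{D}(\delta,0,x)\le p(\delta,0)$ — and both arrive at the identical final estimate. Your observation that $R$ plays no role is also correct; it is a leftover parameter in the paper's statement.
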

\begin{proof}  We mainly follow the idea of \cite[Lemma 2.1]{Fu}. For any bounded domain $D$, $t>0$, and $\w\in \Omega$,
\begin{align*}u^\w(t,0)&=\Ee_0\left[\exp\left(-\int_0^t V^\w(Z_s)\,ds\right)\right]\\
&\le \Ee_0\left[\exp\left(-\int_0^t V^\w(Z_s)\,ds\right): \tau_{D}> t\right]+\Pp_0(\tau_{D}\le t)\\
&=:I_1+I_2.\end{align*}
Next, we will estimate $I_1$ in two different ways.

First, we repeat the proof of \cite[Lemma 3.1]{KP} as follows. For any $0<\delta<t$,
\begin{align*}I_1&=T_t^{V^\w,D}\I_{D}(0)=T_{\delta/2}^{V^\w,D}T_{t-\delta/2}^{V^\w,D}\I_{D}(0)\\
&=\langle p^{V^\w,D}(\delta/2,0,\cdot), T_{t-\delta/2}^{V^\w,D}\I_{D}\rangle_{L^2(D;dx)}\\
&\le \|p^{V^\w,D}(\delta/2,0,\cdot)\|_{L^2(D;dx)} \|T_{t-\delta/2}^{V^\w,D}\I_{D}\|_{L^2(D;dx)}\\
&\le \|p (\delta/2,0,\cdot)\|_{L^2(\R^d;dx)}e^{-(t-\delta/2)\lambda_{V^\w,D}}\|\I_{D}\|_{L^2(D;dx)}\\
&=p (\delta,0)^{1/2} |D|^{1/2}\exp(-(t-\delta/2)\lambda_{V^\w,D }),\end{align*} where in the first inequality we used the Cauchy-Schwarz inequality and the second inequality follows from \eqref{e:note1ss}.

Second, for any $0<\delta<t$, by the H\"{o}lder inequality with $a,b>1$ satisfying $1/a+1/b=1$,
\begin{align*}I_1&\le \left(\Ee_0\left[\exp\left(-b\int_0^\delta V^\w(Z_s)\,ds\right)\right]\right)^{1/b}\left(\Ee_0\left[\exp\left(-a\int_\delta ^tV^\w(Z_s)\,ds\right):\tau_{D}> t\right]\right)^{1/a}\\
&\le \left(\int_{D} p^{D}(\delta,0,x)\Ee_x\left[\exp\left(-a\int_0 ^{t-\delta}V^\w(Z_s)\,ds\right):\tau_{D}> t-\delta\right]\,dx\right)^{1/a}\\
&\le p(\delta,0)^{1/a}\left(\int_{D}\Ee_x\left[\exp\left(-a\int_0 ^{t-\delta}V^\w(Z_s)\,ds\right):\tau_{D}> t-\delta\right]\,dx\right)^{1/a}\\
&\le p(\delta,0)^{1/a} |D|^{1/a}\exp(-a^{-1}(t-\delta)\lambda_{aV^\w, D}), \end{align*} where in the last inequality we used \eqref{e:note3}.

Therefore, the assertion follows from all the estimates above.
\end{proof}

\begin{remark}The proof above essentially claims that for any bounded domain $D$, $0<\delta<t$, $a>1$ and $\w\in \Omega$,
$$\Ee_0\left[\exp\left(-\int_0^t V^\w(Z_s)\,ds\right): \tau_{D}> t\right]\le I(D,t, V^\w,\delta,a),$$
where
\begin{align*}I(D,t, V^\w,\delta,a):=\min\bigg\{& p(\delta,0)^{1/2}  |D|^{1/2}\exp(-(t-\delta/2)\lambda_{V^\w,D}), \\
&
 p(\delta,0)^{1/a} |D|^{1/a}\exp(-a^{-1}(t-\delta)\lambda_{aV^\w,D}) \bigg\}.\end{align*} By this estimate, we can further make a slightly improving argument for  Proposition \ref{L:lim-u}. Indeed, let $\{D_k\}_{k \ge1}$ be a sequence of increasing bounded domains such that $\cup_{k\ge1}D_k=\R^d$. Then, for any $t>0$,
\begin{align*}u^\w(t,0)&\le \Ee_0\left[\exp\left(-\int_0^t V^\w(Z_s)\,ds\right): \tau_{D_1}> t\right]\\
&\quad + \sum_{k=1}^\infty\Ee_0\left[\exp\left(-\int_0^t V^\w(Z_s)\,ds\right): \tau_{D_{k}}\le t< \tau_{D_{k+1}}\right]\\
&=:J_0+\sum_{k=1}^\infty J_k.\end{align*}
It is clear that
$J_0\le I(D_1,t, V^\w,\delta,a).$ On the other hand, by the H\"{o}lder inequality, for any $k\ge1$ and $\xi,\eta>1$ with $1/\xi+1/\eta=1$,
\begin{align*}J_k\le & \left[\Pp_0(\tau_{D_{k}}\le t< \tau_{D_{k+1}})\right]^{1/\xi} \left[\Ee_0\left(\exp\left(-\eta\int_0^t V^\w(Z_s)\,ds\right):\tau_{D_{k+1}}>t\right)\right]^{1/\eta}\\
\le& \left[\Pp_0(\tau_{D_{k}}\le t< \tau_{D_{k+1}})\right]^{1/\xi}  \left[I(D_{k+1},t, \eta V^\w,\delta,a)\right]^{1/\eta}.
\end{align*} Therefore, it holds that
$$ u^\w(t,0)\le I(D_1,t, V^\w,\delta,a)+\sum_{k=1}^\infty \left[\Pp_0(\tau_{D_{k}}\le t< \tau_{D_{k+1}})\right]^{1/\xi}  \left[I(D_{k+1},t, \eta V^\w,\delta,a)\right]^{1/\eta}.$$
In particular, letting $\eta\to \infty$ (i.e., $\xi\to1$), the estimate above is reduced to Proposition \ref{L:lim-u}.
 \end{remark}

\subsubsection{\bf Lower bounds}

\begin{lemma}\label{L:lem-l} For any bounded domain $D\subset \R^d$, $0<\delta<t$ and $a,b>1$ with $1/a+1/b=1$, and for any $\w\in \Omega$,
\begin{align*}&\int_D \Ee_x\left[\exp\left(-\int_0^{t} V^\w(Z_s)\,ds\right):\tau_D> t\right]\,dx\\
&\ge p(\delta,0)^{-1}  p(t,0)^{-ab^{-1}} |D|^{-2ab^{-1}}  \,\exp\left(-a (t+\delta)\lambda_{a^{-1} V^\w, D}\right). \end{align*}
 \end{lemma}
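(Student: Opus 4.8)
The goal is to bound the integral $\int_D \Ee_x[\exp(-\int_0^t V^\w(Z_s)\,ds):\tau_D>t]\,dx$ \emph{from below} by something involving $\exp(-a(t+\delta)\lambda_{a^{-1}V^\w,D})$. The natural dual of the upper-bound argument in Proposition \ref{L:lim-u} is to reconstruct the trace/spectral quantity \eqref{e:note2}, namely $e^{-t\lambda_{V^\w,D}}\le \int_D\Ee_x[\cdots \delta_x(Z_t):\tau_D>t]\,dx$, and then use the Chapman--Kolmogorov / semigroup decomposition to trade the delta function against honest heat kernels at the cost of extra $\delta$ in time and extra powers of $p(\cdot,0)$ and $|D|$.

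The plan is as follows. First I would apply \eqref{e:note2} with the potential $a^{-1}V^\w$ and with time $t$ replaced by $t+\delta$ (or some convenient $s$), giving
$$e^{-(t+\delta)\lambda_{a^{-1}V^\w,D}}\le \int_D \Ee_x\Big[\exp\Big(-a^{-1}\!\int_0^{t+\delta} V^\w(Z_s)\,ds\Big)\,\delta_x(Z_{t+\delta}):\tau_D> t+\delta\Big]\,dx = \sum_{k\ge1} e^{-(t+\delta)\lambda_k^{a^{-1}V^\w,D}}e_k(x)^2,$$
so that the left side is controlled by $\|p^{a^{-1}V^\w,D}(t+\delta,x,\cdot)\|$-type quantities. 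Next I split the time interval $[0,t+\delta]$ into $[0,\delta/2]$, $[\delta/2,t+\delta/2]$, $[t+\delta/2,t+\delta]$ and use the semigroup property $p^{a^{-1}V^\w,D}(t+\delta,x,x)=\langle p^{a^{-1}V^\w,D}(\delta/2,x,\cdot),\,T^{a^{-1}V^\w,D}_{t}\, p^{a^{-1}V^\w,D}(\delta/2,\cdot,x)\rangle$ — but to get honest heat kernels I would instead bound the middle factor crudely and the two outer factors by $p(\delta,0)$ (via $p^{a^{-1}V^\w,D}(\delta/2,x,y)\le p(\delta/2,x,y)\le p(\delta/2,0)$ and a Chapman--Kolmogorov step); the powers $p(t,0)^{-ab^{-1}}$, $|D|^{-2ab^{-1}}$ and $p(\delta,0)^{-1}$ in the target suggest that the real mechanism is a reverse Hölder / reverse Cauchy--Schwarz manoeuvre: write the integrand $\Ee_x[\exp(-\int_0^t V^\w):\tau_D>t]$ as an $L^1$ quantity, relate $\exp(-a^{-1}\int V^\w)$ to $\exp(-\int V^\w)$ by Hölder with exponents $a,b$ run in the favorable direction (using $\int_0^{t+\delta}$ versus $\int_0^t$ and absorbing the $[t,t+\delta]$ piece into $p(\delta,0)$), and pay a Jensen/Hölder price that produces the $ab^{-1}$ exponent on $p(t,0)^{-1}|D|^{-2}$.

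Concretely, I would run the Hölder inequality $\big(\Ee_x[G^{1/a} H]\big) \ge$ — no, rather: one writes, with $1/a+1/b=1$,
$$\Ee_x\Big[\exp\Big(-\tfrac1a\!\int_0^t V^\w\Big):\tau_D>t\Big] \le \Big(\Ee_x\Big[\exp\Big(-\!\int_0^t V^\w\Big):\tau_D>t\Big]\Big)^{1/a}\Big(\Pp_x(\tau_D>t)\Big)^{1/b},$$
and since $\Pp_x(\tau_D>t)\le \int_D p(t,x,y)\,dy\le p(t,0)|D|$, this gives $\Ee_x[\exp(-\int_0^t V^\w):\tau_D>t]\ge (p(t,0)|D|)^{-a/b}\big(\Ee_x[\exp(-a^{-1}\int_0^t V^\w):\tau_D>t]\big)^{a}$; integrating over $D$ and applying Jensen's inequality $|D|^{-1}\int_D f^a\ge (|D|^{-1}\int_D f)^a$ converts the $a$-th powers into a single $a$-th power of $\int_D \Ee_x[\exp(-a^{-1}\int V^\w):\tau_D>t]\,dx$, which then loses another $|D|$ factor, explaining the $|D|^{-2a/b}$; finally \eqref{e:note2} with potential $a^{-1}V^\w$, a shift from $t$ to $t+\delta$ handled by bounding the extra $[t,t+\delta]$-exponential by $1$ inside and inserting $p(\delta,0)^{-1}$ via \eqref{e:note2} applied at time $t+\delta$ (using $p^{a^{-1}V^\w,D}(t+\delta,x,x)\ge$ a heat-kernel lower bound times $e^{-(t+\delta)\lambda_1}$ — or more simply \eqref{e:note2} directly) closes the chain. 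The main obstacle will be bookkeeping: getting the delta-function/trace inequality \eqref{e:note2} to interface cleanly with the Hölder-and-Jensen reversal so that exactly the exponents $-a/b$ on $p(t,0)|D|^2$ and the single $p(\delta,0)^{-1}$ and the time shift $t\mapsto t+\delta$ all come out right, rather than with worse constants; the inequalities themselves are all elementary (Chapman--Kolmogorov, $p^D\le p\le p(\cdot,0)$, Hölder, Jensen, \eqref{e:note2}), so no deep estimate is needed, only careful tracking of which direction each inequality points.
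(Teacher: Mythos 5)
Your decomposition is genuinely different from the paper's and it is essentially sound, but it proves a slightly weaker constant than the lemma states. The paper starts from \eqref{e:note2} with potential $a^{-1}V^\w$ at time $t+\delta$ and applies H\"{o}lder \emph{once}, at the level of the trace integral $\int_D \Ee_x[\,\cdot\,\delta_x(Z_{t+\delta}):\tau_D>t+\delta]\,dx$, splitting the time integral of $V^\w$ over $[0,t]$ and $[t,t+\delta]$ into the two factors $I_1^{1/a}$ and $I_2^{1/b}$, and then removes the delta from $I_1$ by the Markov property at cost $p(\delta,0)^{1/a}$, which becomes $p(\delta,0)^{1}$ after raising to the $a$-th power. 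You instead remove the delta \emph{first} (at cost $p(\delta,0)^{-1}$), obtaining $\int_D\Ee_x[e^{-a^{-1}\int_0^t V^\w}:\tau_D>t]\,dx\ge p(\delta,0)^{-1}e^{-(t+\delta)\lambda_{a^{-1}V^\w,D}}$, and then perform a pointwise-in-$x$ H\"{o}lder followed by Jensen over $D$ to raise both sides to the $a$-th power. This raises $p(\delta,0)^{-1}$ to $p(\delta,0)^{-a}$, so your chain yields
\begin{align*}
\int_D\Ee_x\left[\exp\left(-\int_0^t V^\w(Z_s)\,ds\right):\tau_D>t\right]dx \ge p(\delta,0)^{-a}\,p(t,0)^{-a/b}\,|D|^{-2a/b}\,\exp\left(-a(t+\delta)\lambda_{a^{-1}V^\w,D}\right),
\end{align*}
with $p(\delta,0)^{-a}$ in place of the stated $p(\delta,0)^{-1}$ (you correctly recover the $p(t,0)^{-a/b}$ and $|D|^{-2a/b}$ factors, the latter via $a-1=a/b$). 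Since $a>1$ and $p(\delta,0)\ge 1$ for small $\delta$, your bound is formally weaker. This discrepancy is harmless for everything the lemma is used for (Proposition \ref{P:prol} and Theorem \ref{T-low} ultimately send $a\to1$, and $p(\delta,0)$ already appears to a power $-a$ in Proposition \ref{P:prol}), so your argument fully serves the paper's purposes; but strictly as a proof of the displayed lemma it establishes a marginally weaker inequality, and you should either flag that or track whether the paper's treatment of the delta in the $I_2$ factor of its H\"{o}lder step really produces the cleaner $p(\delta,0)^{-1}$. Your final step (``closes the chain'') should also be written out explicitly along the lines of the computation above rather than left as a sketch, since the order of the delta-removal and the H\"{o}lder step is exactly where the exponent of $p(\delta,0)$ is determined.
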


\begin{proof} We start from \eqref{e:note2}, i.e.,
$$e^{-t\lambda_{V^\w, D}} \le \int_D \Ee_x\left[\exp\left(-\int_0^t V^\w(Z_s)\,ds\right) \delta_x(Z_t):\tau_D> t\right]\,dx,\quad t>0.$$
Replacing $t$ and $V^\w$ by $t+\delta$ and $a^{-1}V^\w$ respectively in the inequality above, we get by the H\"{o}lder inequality that for all $a,b>1$ with $1/a+1/b=1$,
\begin{align*}e^{-(t+\delta) \lambda_{a^{-1}V^\w, D}}&\le \int_D \Ee_x\left[\exp\left(-a^{-1}\int_0^{t+\delta} V^\w(Z_s)\,ds\right) \delta_x(Z_{t+\delta}):\tau_D> t+\delta\right]\,dx\\
&\le \left(\int_D \Ee_x\left[\exp\left(-\int_0^{t} V^\w(Z_s)\,ds\right) \delta_x(Z_{t+\delta}):\tau_D> t+\delta\right]\,dx\right)^{1/a}\\
&\quad\times \left(\int_D \Ee_x\left[\exp\left(-\frac{b}{a}\int_t^{t+\delta} V^\w(Z_s)\,ds\right):\tau_D> t+\delta\right]\,dx\right)^{1/b}\\
&=:I_1\times I_2.\end{align*}

On the one hand, by the Markov property,
\begin{align*}I_1&\le \left(\int_D \Ee_x\left[\exp\left(-\int_0^{t} V^\w(Z_s)\,ds\right) \delta_x(Z_{t+\delta}):\tau_D> t\right]\,dx\right)^{1/a}\\
&\le \left(\int_D \Ee_x\left[\exp\left(-\int_0^{t} V^\w(Z_s)\,ds\right)p(\delta,x-Z_t):\tau_D> t \right]\,dx\right)^{1/a}\\
&\le p(\delta,0)^{1/a} \left(\int_D \Ee_x\left[\exp\left(-\int_0^{t} V^\w(Z_s)\,ds\right):\tau_D> t \right]\,dx\right)^{1/a}.  \end{align*}
On the other hand, also due to the Markov property,
\begin{align*}I_2= & \left(\int_D \int_D p^D(t,x,y) \Ee_y\left[\exp\left(-\frac{b}{a}\int_0^{\delta} V^\w(Z_s)\,ds\right):\tau_D> \delta\right]\,dy\,dx\right)^{1/b}\\
\le& p(t,0)^{1/b}|D|^{1/b} \left( \int_D \Ee_y\left[\exp\left(-\frac{b}{a}\int_0^{\delta} V^\w(Z_s)\,ds\right):\tau_D> \delta\right]\,dy\right)^{1/b}\\
\le& p(t,0)^{1/b} |D|^{2/b}.\end{align*}

Combining with both estimates above, we find that
\begin{align*}&\int_D \Ee_x\left[\exp\left(-\int_0^{t} V^\w(Z_s)\,ds\right):\tau_D> t\right]\,dt\\
&\ge p(\delta,0)^{-1} p(t,0)^{-a/b} |D|^{-2a/b}   \,\exp\left( -a (t+\delta)\lambda_{a^{-1} V^\w, D}\right). \end{align*} The proof is completed.
\end{proof}

\begin{proposition}\label{P:prol} For any bounded domain $D\subset \R^d$ with $0\in D$, subdomain $D_1\subset D$, $0<\delta<t$, $a,b>1$ with $1/a+1/b=1$ and for any $\w\in \Omega$,
\begin{align*}u^\w(t,0)&\ge\Ee_0\left[\exp\left(-\int_0^t V^\w(Z_s)\,ds\right):\tau_D> t\right]\\
&\ge p(\delta,0)^{-a}p({t-\delta},0)^{-a^2/b}  |D_1|^{-2a^2/b}  \left(\Ee_0\left[\exp\left(\frac{b}{a} \int_0^\delta V^\w(Z_s)\,ds\right):\tau_D> \delta\right]\right)^{-a/b}
\\
&\quad\times\left(\inf_{x\in D_1} p^D(\delta,0,x)\right)^a \exp\left(-a^2 t\lambda_{a^{-2} V^\w, D_1}\right). \end{align*}

 \end{proposition}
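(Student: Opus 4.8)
The plan is to run the same ``return to the origin'' argument used in Lemma~\ref{L:lem-l} and Proposition~\ref{L:lim-u}, but now keeping a nonzero starting point by inserting an extra transition step at the \emph{beginning} of the path rather than at the end. First I would drop the boundary term and write
\begin{align*}
u^\w(t,0)&=\Ee_0\left[\exp\left(-\int_0^t V^\w(Z_s)\,ds\right)\right]
\ge \Ee_0\left[\exp\left(-\int_0^t V^\w(Z_s)\,ds\right):\tau_D>t\right],
\end{align*}
which is the first claimed inequality. For the second, I would split the time interval as $[0,\delta]\cup[\delta,t]$, apply the H\"older inequality with exponents $a,b$ (so that the part on $[0,\delta]$ is raised to the $1/b$ power and produces the factor $\big(\Ee_0[\exp(\frac ba\int_0^\delta V^\w(Z_s)\,ds):\tau_D>\delta]\big)^{-a/b}$ after rearranging), and then use the Markov property at time $\delta$ to reduce the $[\delta,t]$ part to an integral of $p^D(\delta,0,x)$ against $\Ee_x[\exp(-a^{-1}\int_0^{t-\delta}V^\w(Z_s)\,ds):\tau_D>t-\delta]$ over $D$. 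Restricting the integral to $D_1\subset D$ and bounding $p^D(\delta,0,x)\ge \inf_{x\in D_1}p^D(\delta,0,x)$ pulls out the $(\inf_{x\in D_1}p^D(\delta,0,x))^a$ factor.

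The remaining quantity $\int_{D_1}\Ee_x[\exp(-a^{-1}\int_0^{t-\delta}V^\w(Z_s)\,ds):\tau_{D_1}>t-\delta]\,dx$ is then bounded below by invoking Lemma~\ref{L:lem-l} on the domain $D_1$, with $t$ replaced by $t-\delta$ and $V^\w$ replaced by $a^{-1}V^\w$, and with the H\"older pair in that lemma also taken to be $(a,b)$: this introduces the $\lambda$-term, which becomes $\lambda_{a^{-1}\cdot a^{-1}V^\w,D_1}=\lambda_{a^{-2}V^\w,D_1}$, and the exponent in front of $t$ becomes $a\cdot a=a^2$ after tracking the powers through the two nested H\"older applications, giving $\exp(-a^2 t\lambda_{a^{-2}V^\w,D_1})$ once one absorbs the $\delta$-shifts into constants (using monotonicity of the eigenvalue survival estimate in time and the fact that $t-\delta\le t$, together with the freedom to pass from $t-\delta$ to $t$ up to an error controlled by the $p(\delta,0)$ and $p(t-\delta,0)$ prefactors). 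Collecting the $p(\delta,0)^{-a}$, the $p(t-\delta,0)^{-a^2/b}$ from the inner lemma raised to the $1/a$-type power, and the $|D_1|^{-2a^2/b}$ volume factor yields exactly the stated bound.

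The main obstacle I expect is bookkeeping: making sure the exponents compose correctly through the two layers of H\"older inequalities (the outer one at scale $a$ splitting $[0,\delta]$ from $[\delta,t-\delta]$, the inner one hidden inside Lemma~\ref{L:lem-l}) so that the eigenvalue scaling lands on $a^{-2}V^\w$ and the prefactor on $\lambda$ is $a^2 t$ rather than, say, $a^2(t-\delta)$ or $a^3 t$. In particular one must be careful that replacing $V^\w$ by $a^{-1}V^\w$ before feeding into Lemma~\ref{L:lem-l} (which itself rescales by another $a^{-1}$) is what produces the $a^{-2}$, and that the lemma's output $p(\cdot,0)^{-ab^{-1}}|D|^{-2ab^{-1}}$ with its argument $t$ replaced by $t-\delta$ gets raised to the appropriate power when pulled out through the outer H\"older step. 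The probabilistic content is entirely routine — it is only the Markov property, the semigroup identity \eqref{e:note2}, and the pointwise bound $p^D\le p$ — so once the constants are aligned the proof is short.
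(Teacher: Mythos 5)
Your proposal is correct and follows essentially the same route as the paper: drop the boundary term, apply H\"older to $\Ee_0[\exp(-a^{-1}\int_\delta^t V^\w):\tau_D>t]$ to split off the $[0,\delta]$ factor, use the Markov property at time $\delta$ and restrict to $D_1$ to pull out $(\inf_{D_1}p^D(\delta,0,\cdot))^a$, then feed the remaining integral into Lemma~\ref{L:lem-l} with $(D_1,\,t-\delta,\,a^{-1}V^\w)$ and raise the result to the power $a$.

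One small clarification on your bookkeeping worry: no ``absorption of $\delta$-shifts into constants'' is actually needed. Lemma~\ref{L:lem-l} applied at time $t-\delta$ produces $\exp\bigl(-a\,((t-\delta)+\delta)\,\lambda_{a^{-2}V^\w,D_1}\bigr)=\exp(-a\,t\,\lambda_{a^{-2}V^\w,D_1})$, so the shift cancels exactly; raising to the power $a$ then gives precisely $\exp(-a^2 t\,\lambda_{a^{-2}V^\w,D_1})$ with no error term to control.
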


\begin{proof} For $0<\delta<t$, by the H\"{o}lder inequality, we have that for any $a,b>1$ with $1/a+1/b=1$,
\begin{align*}&\Ee_0\left[\exp\left(-a^{-1}\int_\delta^t V^\w(Z_s)\,ds\right):\tau_D> t\right]\\
&\le \left(\Ee_0\left[\exp\left(-\int_0^t V^\w(Z_s)\,ds\right):\tau_D> t\right]\right)^{1/a}\\
&\quad\times \left(\Ee_0\left[\exp\left( \frac{b}{a}\int_0^\delta V^\w(Z_s)\,ds\right):\tau_D> \delta\right]\right)^{1/b}.\end{align*}

Note that, according to the Markov property,
\begin{align*}&\Ee_0\left[\exp\left(-a^{-1}\int_\delta^t V^\w(Z_s)\,ds\right):\tau_D> t\right]\\
&=\int_D p^D(\delta,0,x)\Ee_x\left[\exp\left(-a^{-1}\int_0^{t-\delta} V^\w(Z_s)\,ds\right):\tau_D> t-\delta\right]\,dx\\
&\ge \left(\inf_{x\in D_1} p ^D(\delta, 0,x)\right)\int_{D_1}\Ee_x\left[\exp\left(-a^{-1}\int_0^{t-\delta} V^\w(Z_s)\,ds\right):\tau_{D_1}> t-\delta\right]dx.\end{align*}

Hence,
\begin{align*}&\Ee_0\left[\exp\left(-\int_0^t V^\w(Z_s)\,ds\right):\tau_D> t\right]\\
&\ge\left(\inf_{x\in D_1} p^D(\delta,0,x)\right)^a\left(\int_{D_1}\Ee_x\left[\exp\left(-a^{-1}\int_0^{t-\delta} V^\w(Z_s)\,ds\right):\tau_{D_1}> t-\delta\right]\,dx \right)^a\\
&\quad\times\left(\Ee_0\left[\exp\left( \frac{b}{a}\int_0^\delta V^\w(Z_s)\,ds\right):\tau_D> \delta\right]\right)^{-a/b}\\
&\ge \left(\inf_{x\in D_1} p ^D(\delta, 0,x)\right)^a p(\delta,0)^{-a}p (t-\delta, 0)^{-a^2b^{-1}} |D_1|^{-2a^2b^{-1}}
\\
&\quad\times \exp\left(-a^2 t\lambda_{a^{-2} V^\w, D_1}\right)\,\left(\Ee_0\left[\exp\left( \frac{b}{a}\int_0^\delta V^\w(Z_s)\,ds\right):\tau_D> \delta\right]\right)^{-a/b}, \end{align*} where in the last inequality we used Lemma \ref{L:lem-l}. The proof is finished. \end{proof}

\subsection{Estimates for the principle Dirichlet eigenvalue}

 In order to apply Propositions \ref{L:lim-u} and \ref{P:prol} to obtain explicit quenched asymptotics for $u^\w(t,0)$, we need to estimate the principle Dirichlet eigenvalue $\lambda_{ V^\w, D}$.

It was known that the  large time asymptotic behavior of solutions to \eqref{e:an1} is closely connected to the integrated density of states of the random Schr\"{o}dinger operator $H^\w=-L+V^\w$, which is defined by
\begin{equation}\label{e:density}N(\lambda)=\lim_{R\to \infty} \frac{1}{(2R)^d}\Ee_\Q\left[\sharp\{k\in \N: \lambda_k^{V^\w,B(0,R)}\le \lambda\}\right] \end{equation} with $\lambda_k^{V^\w,B(0,R)}$ being the $k$-th smallest eigenvalue of $H^\w$  with the Dirichlet conditions on $B(0,R)^c$.  See \cite[Section 5]{Ok1} for the existence of the limit above. Indeed, the existence of the limit in \eqref{e:density} was proved by using the spatial superadditivity property of $\Ee_\Q\left[\sharp\{k\in \N: \lambda_k^{V^\w,B(0,R)}\le \lambda\}\right]$, and so it is in fact the supremum over $R>0$.
Furthermore, it was observed that $N(\lambda)$ is the Laplace transform of the expectation of $u^\w(t,x)$ given by \eqref{e:an2} on $(\Omega,\Q)$. Then, an appropriate Tauberian theorem can be used to derive the information on the tail of $N(\lambda)$ as $\lambda\to0$ from the large time behavior of $\Ee_\Q[u^{\omega}(t,x)]$. Due to the corresponding Abelian theorem, the converse is also true.
We note that the study of $N(\lambda)$ requires the use of the associated pinned process rather than the symmetric L\'evy process $Z$ itself.

\subsubsection{\bf Lower bounds of $\lambda_{V^\w,B(0,R)}$ for $R$ large enough}\label{section3.1}

To estimate lower bounds of $\lambda_{V^\w,B(0,R)}$, we now recall some known results about the integral density $N(\lambda)$  of states of the random Schr\"{o}dinger operator $H^\w=-L+V^\w$  defined by \eqref{e:density}. It has been proved in \cite[Theorems 6.2 and 6.3]{Ok1} that
$$\lim_{\lambda \to0} \lambda ^{d/(\beta\wedge \alpha)} \log N (\lambda) =-k_0,$$ where
\begin{equation}\label{e:con-}
k_0:=\begin{cases} \rho \lambda_{(\alpha)}(B(0,1))^{d/\alpha},& \hbox{case (L)},\\
\frac{\beta}{d+\beta}\left(\frac{d}{d+\beta}\right)^{d/\beta}\!\!\left(\Gamma\left(\frac{\beta}{d+\beta}\right)\rho w_d\right)^{(d+\beta)/\beta}\!\!K^{d/\beta},&\hbox{case (H)},\end{cases} \end{equation} where $$\lambda_{(\alpha)}(B(0,1))=\inf_{{\rm open}\,\, U, |U|=w_d}\lambda_1^{(\alpha)}(U),$$$w_d$ is the volume of the unit ball $B(0,1)$, and $\lambda_1^{(\alpha)}(U)$ is the principle Dirichlet eigenvalue for the symmetric $\alpha$-stable process killed upon exiting the open subset $U$ and with the exponent $\psi^{(\alpha)}(\xi)$ given in \eqref{e:a1-1}. In particular, when this symmetric $\alpha$-stable process is isotropic, $\lambda_{(\alpha)}(B(0,1))=w_d^{\alpha/d}\lambda_1^{(\alpha)}(B(0,1))$.
With this at hand, we can see from the arguments of (2.3)--(2.6) in \cite[Section 2]{Fu} that for any $\varepsilon\in (0,1)$, $\Q$-almost surely  there is $R_\varepsilon(\w)>0$ such that for every $R\ge R_\varepsilon(\w)$,
\begin{equation}\label{e:lim-u}\lambda_{V^\w,B(0,R)}\ge (1-\varepsilon) \left(\frac{k_0}{d \log R}\right)^{(\alpha\wedge \beta)/d}.\end{equation}

\subsubsection{\bf Upper bounds of $\lambda_{V^\w,B(z,r)}$ for $r$ large enough with some $z$}

The following proposition is crucial for lower bounds  of quenched asymptotic of $u^\w(t,0)$.

\begin{proposition}\label{L:lem-l3} The following two statements hold.
\begin{itemize}
\item[(i)] In the light tailed case {\rm(L)},  for any $\kappa>1$ and $\eta,\varsigma\in (0,1)$, $\Q$-almost surely there exists $r_{\kappa,\eta,\varsigma}(\w)>0$ such that for all $r\ge r_{\kappa,\eta,\varsigma}(\w)$, there is $z:=z(r,\w)\in \R^d$ with $|z|\le M_{\kappa,\eta}(r)$,
\begin{equation}\label{e:eigen1}\lambda_{V^\w,B(z,r)}\le (1+\varsigma)\lambda_1^{(\alpha)}(B(0,1)) r^{-\alpha},\end{equation} where
 $$M_{\kappa,\eta}(r)=r^{-\kappa}\exp\left(\frac{w_d\rho}{d}((1+2\eta)r)^d\right),$$ and $\lambda_1^{(\alpha)}(B(0,1))$ is the principle Dirichlet eigenvalue for the symmetric $\alpha$-stable process killed upon exiting $B(0,1)$ and with the exponent $\psi^{(\alpha)}(\xi)$ given in \eqref{e:a1-1}.
 \item[(ii)] In the heavy tailed case {\rm(H)}, for any $l>1$ large enough, $\kappa>1$ and $\varsigma\in (0,1)$, $\Q$-almost surely there exists $r_{l,\kappa,\varsigma}(\w)>0$ such that for all $r\ge r_{l,\kappa,\varsigma}(\w)$, there is $z:=z(r,\w)\in \R^d$ with $|z|\le M_\kappa(r)$,
\begin{equation}\label{e:eigen2}\lambda_{V^\w,B(z,lr^{\beta/\alpha})}\le(1+\zeta) q_1r^{-\beta},\end{equation}
where $$M_\kappa(r)=r^{-\kappa} e^{r^d}$$ and \begin{equation}\label{q_1}q_1=\frac{d}{d+\beta}\left(\frac{\beta}{d(d+\beta)}\right)^{\beta/d} \left[\rho w_d \Gamma\left(\frac{\beta}{d+\beta}\right)\right]^{(d+\beta)/d} K.\end{equation}   \end{itemize}
 \end{proposition}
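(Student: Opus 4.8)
The plan is to use a second-moment / covering argument on the Poissonian field to show that, with probability one, for every large $r$ there exists a ball of radius $r$ (located within the explicitly prescribed distance $M_{\kappa,\eta}(r)$ of the origin, resp.\ $M_\kappa(r)$) on which the potential $V^\w$ is essentially negligible, so that $\lambda_{V^\w,B(z,r)}$ is controlled by the unperturbed principal Dirichlet eigenvalue $\lambda_1^{(\alpha)}(B(0,1))r^{-\alpha}$ by scaling, up to the factor $(1+\varsigma)$. In case (L), the key mechanism is that since the shape function is light tailed, $\lim_{|x|\to\infty}\varphi(x)|x|^{d+\alpha}=0$; hence the contribution to $V^\w$ on a ball of radius $r$ coming from Poisson points \emph{outside} a slightly larger ball $B(z,(1+\eta)r)$ is uniformly small (it is dominated by $\int_{|y|>\eta r}\bar\varphi(y)\,dy\cdot(\text{const}\cdot\rho)$ on the event \eqref{e:upper-v} controlling the local intensity, which tends to $0$), while the probability that $B(z,(1+2\eta)r)$ contains \emph{no} Poisson point is $\exp(-\rho w_d((1+2\eta)r)^d)$. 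Tiling a huge box of side $\asymp M_{\kappa,\eta}(r)=r^{-\kappa}\exp(\tfrac{w_d\rho}{d}((1+2\eta)r)^d)$ by roughly $(M_{\kappa,\eta}(r)/r)^d \sim r^{-(\kappa+1)d}\exp(w_d\rho((1+2\eta)r)^d)$ disjoint sub-balls of radius $(1+2\eta)r$, the expected number of empty sub-balls is $\gtrsim r^{-(\kappa+1)d}\to\infty$, and by independence over disjoint boxes the probability that none is empty is summable along $r\in\N$; Borel--Cantelli then gives $\Q$-a.s.\ existence of an empty ball for all large integer $r$, and a monotonicity argument fills in the non-integer values. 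On such a ball, $V^\w\le \eps(r)\to 0$ on $B(z,r)$, and by the variational characterization of $\lambda_{V^\w,B(z,r)}$ together with $\lambda_1^{(\alpha)}(B(z,r))=\lambda_1^{(\alpha)}(B(0,1))r^{-\alpha}$ (scaling of the $\alpha$-stable Dirichlet form) and a perturbation estimate $\lambda_{V^\w,B(z,r)}\le \lambda_1^{(\alpha)}(B(z,r))+\sup_{B(z,r)}V^\w$, one obtains \eqref{e:eigen1} for $r$ large.

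For case (H), the heavy-tailed shape function forces a different, more delicate strategy: one cannot make $V^\w$ literally negligible on a large ball because distant Poisson points contribute $\asymp K|x-\w_i|^{-d-\beta}$, whose sum over a Poisson cloud of intensity $\rho$ has mean $\rho\int_{\R^d}\varphi(x-y)\,dy$, a \emph{positive} constant. Instead one seeks a region that is \emph{atypically empty of Poisson points on a ball of the large radius $lr^{\beta/\alpha}$}, so that the potential there, generated only by the far-away cloud, behaves like the deterministic average $\asymp \rho w_d\Gamma(\tfrac{\beta}{d+\beta})K\,(\text{distance})^{-\beta}$; optimizing the radius of the emptied ball against the eigenvalue $\lambda_1^{(\alpha)}(B(0,lr^{\beta/\alpha}))\asymp (lr^{\beta/\alpha})^{-\alpha}=l^{-\alpha}r^{-\beta}$ produces exactly the constant $q_1$ in \eqref{q_1} (this is the same optimization that yields $k_0$ in case (H) of \eqref{e:con-}, and mirrors the Brownian computation of \cite{Fuk2}). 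Concretely, I would: (1) estimate the potential at the center of a ball of radius $\varepsilon_0 r$ emptied of Poisson points, using the law of large numbers for the Poisson field outside together with the heavy-tail asymptotics \eqref{:h}, to get $V^\w(z)\lesssim (1+o(1))\rho w_d\Gamma(\tfrac{\beta}{d+\beta})K(\varepsilon_0 r)^{-\beta}$ and, more importantly, a uniform bound of the same order on the \emph{whole} ball $B(z,lr^{\beta/\alpha})$ provided $lr^{\beta/\alpha}\ll \varepsilon_0 r$ (true since $\beta<\alpha$); (2) run the Borel--Cantelli tiling exactly as in case (L) but now over a box of side $\asymp M_\kappa(r)=r^{-\kappa}e^{r^d}$ tiled by sub-balls of radius $\asymp r$, so the expected number of empty sub-balls is $\gtrsim r^{-(\kappa+d)}e^{(1-o(1))r^d}\to\infty$; (3) apply the perturbation bound $\lambda_{V^\w,B(z,lr^{\beta/\alpha})}\le \lambda_1^{(\alpha)}(B(0,1))(lr^{\beta/\alpha})^{-\alpha}+\sup_{B(z,lr^{\beta/\alpha})}V^\w$ and choose $l$ large and $\varepsilon_0$ appropriately so the sum is $\le (1+\zeta)q_1 r^{-\beta}$.

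The main obstacle I anticipate is case (H), step (1): getting a \emph{uniform over the ball} upper bound on $V^\w$ of the sharp order $\rho w_d\Gamma(\tfrac{\beta}{d+\beta})K\,(\varepsilon_0 r)^{-\beta}$ with the correct constant, rather than merely at the center. This requires splitting the far-field Poisson cloud into annuli, bounding the deterministic integral $\int_{|y|>\varepsilon_0 r}\varphi(x-y)\rho\,dy$ precisely via \eqref{:h} (which gives the $\Gamma$-function through a Beta-type integral), and controlling the Poissonian fluctuations simultaneously over all $x\in B(z,lr^{\beta/\alpha})$ — here one uses that $lr^{\beta/\alpha}=o(\varepsilon_0 r)$ so that the relative geometry of near-field annuli barely changes across the ball, plus a concentration bound for the number of Poisson points in each annulus that is uniform in the (polynomially many, after discretization) reference points $x$. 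A secondary technical point is ensuring the location bound $|z|\le M_{\kappa,\eta}(r)$ (resp.\ $M_\kappa(r)$): this is automatic from the tiling since we only search within a box of that side length, but one must check the event \eqref{e:upper-v} (which bounds the local Poisson intensity near the origin and hence near all candidate centers $z$, allowing the light-tail / far-field estimates to be made uniform) indeed holds for all $|z|\le M_{\kappa,\eta}(r)$ once $r\ge r(\w)$ — this follows from \eqref{e:upper-v} applied on the ball $B(0,2M_{\kappa,\eta}(r))$ together with the growth rate of $M_{\kappa,\eta}$.
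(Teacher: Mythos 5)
Your proof sketch for part (i) is essentially the paper's argument: tile a box of side $\asymp M_{\kappa,\eta}(r)$ by boxes of side $\asymp r$, estimate the probability that a ball of radius $(1+\eta)r$ is empty and that the light-tailed far field contributes $\le \varepsilon r^{-\alpha}$, run Borel--Cantelli, and conclude via $\lambda_{V^\w,B(z,r)}\le \lambda_1(B(z,r))+\sup_{B(z,r)}V^\w$. Two small imprecisions: the perturbation bound must use $\lambda_1(B(z,r))$ for the actual process $Z$, not $\lambda_1^{(\alpha)}(B(z,r))$, and exact scaling $\lambda_1(B(0,r))=\lambda_1^{(\alpha)}(B(0,1))r^{-\alpha}$ is false for general $Z$ satisfying \eqref{e:a1-1} --- the paper instead invokes Lemma \ref{L:lem-l33}, which delivers the one-sided asymptotic comparison and is precisely why the loss factor $(1+\varsigma)$ appears in \eqref{e:eigen1}.

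For part (ii) there is a genuine gap: the ``empty-ball plus law-of-large-numbers'' strategy cannot produce the constant $q_1$. If you empty a ball $B(z,\varepsilon_0 r)$ and use the LLN for the far field, then at the center you get $V^\w(z)\approx \rho\int_{|y|>\varepsilon_0 r}K|y|^{-d-\beta}\,dy = \rho K\, \tfrac{d w_d}{\beta}(\varepsilon_0 r)^{-\beta}$ --- there is no Gamma function here, contrary to your claim that $\Gamma(\beta/(d+\beta))$ arises from a Beta-type integral of $\int_{|y|>\varepsilon_0 r}\varphi(x-y)\rho\,dy$. The Borel--Cantelli constraint forces $\rho w_d\varepsilon_0^d\le 1$, and optimizing gives a value of $\lambda_{V^\w,B(z,lr^{\beta/\alpha})}$ of order $q_1' r^{-\beta}$ with $q_1'= K\tfrac{d}{\beta}(\rho w_d)^{(d+\beta)/d}$, which is strictly larger than $q_1$; checking e.g.\ $d=1$, $\beta=1/2$ gives $q_1'/q_1\approx 1.19$. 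In other words, a uniformly emptied ball is not the optimal rare configuration. The paper instead follows Fukushima's change-of-measure scheme: one introduces the Laplace transform $H(t)=\log\Ee_\Q[e^{-tV^\w(0)}]$, whose asymptotic $H(t)\sim -\rho w_d\Gamma(\tfrac{\beta}{d+\beta})(Kt)^{d/(d+\beta)}$ is where the Gamma function enters, and tilts $\Q$ by $e^{-H(\rho_0)-\rho_0 V^\w(0)}$ to a measure $\tilde\Q_t$ under which the Poisson intensity is the non-uniform profile $\rho e^{-\rho_0(\lambda(t))\varphi_0(z)}$ (Lemmas \ref{L:lem5.1}--\ref{L5.2}). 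The lower bound on $\Q(\sup_{B_M(t)}|V^\w-\lambda(t)|\le \varepsilon(\log t)^{-\beta/d})$ comes from an exponential Markov inequality at the Legendre-dual scale $\rho_0(\lambda(t))$ plus a CLT under $\tilde\Q_t$, and only after this does a Borel--Cantelli tiling argument (with events $F_t,G_t$ shaped quite differently from yours) appear. Your step (1) is therefore not merely hard to make uniform --- it targets the wrong constant, and the optimization you refer to as ``the same one that yields $k_0$'' is in fact the Legendre duality in the density-of-states computation, not an empty-ball optimization.
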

\begin{proof} The proof of the assertion (ii) is a little more delicate, and we postpone it into the appendix. Here we only give the proof of the assertion (i). Note that the argument for the assertion (i) with some modifications works for the critical case; see Proposition \ref{L:lem-l4}.
Fix $\kappa>1$ and $\eta\in (0,1)$, and set
$I_r:=( (2(1+\eta)r)\Z^d)\cap \{z\in \R^d: |z|\le M_{\kappa,\eta}(r)\}$ for any $r>0$. Define $\varphi_0(r)=\sup_{|x|\ge r}\varphi(x)$ for all $r\ge0$ and $\varphi_0(x)=\varphi_0(|x|)$ for $x\in \R^d$. It is clear that $\varphi(x)\le \varphi_0(x)$ for all $x\in \R^d$, and $\varphi_0(r)$ is a  decreasing function on $[0,\infty)$ such that \begin{equation}\label{e:ppp}\lim_{r\to \infty}\varphi_0(r) r^{d+\alpha}=0.\end{equation}   For any $z\in I_r$ and $\varepsilon\in (0,1)$, define
\begin{align*}F_r(z)=&\left\{\hbox{the ball } B(z,(1+\eta)r) {\hbox { at least contains one Poission point}}\right\},\\
G_r(z)=&\left\{\sup_{y\in B(z,r)}\sum_{\omega_i\notin B(z,(1+\eta)r)}\varphi_0(y-\omega_i)\ge   \varepsilon r^{-\alpha} \right\}.\end{align*} We will estimate
 $\Q(\cap_{ z\in I_r}(F_r(z)\cup G_r(z))).$

Note that $\{F_r(z)\}_{z\in I_r}$ are i.i.d., and that
$\Q(F_r(0))=1-e^{-w_d \rho ((1+\eta)r)^d}.$ Hence, there is $r_0(\kappa,\eta)>0$ such that for all $r\ge r_0(\kappa,\eta)$,
\begin{align*}\Q(\cap_{ z\in I_r} F_r(z))&\le (1-e^{-w_d \rho ((1+\eta)r)^d})^{\frac{1}{2}\big(\frac{M_{\kappa, \eta}(r)}{(1+\eta)r}\big)^d}\\
&\le \exp\left(-\frac{1}{2}e^{-w_d \rho ((1+\eta)r)^d}\left(\frac{M_{\kappa, \eta}(r)}{(1+\eta)r}\right)^d \right)\\
&\le \exp\left(-2^{-1}(1+\eta)^{-d} r^{-d(1+\kappa)}e^{-w_d \rho ((1+\eta)r)^d} e^{w_d\rho((1+2\eta) r)^d}   \right) \\
&\le  \exp(-r^d ),\end{align*} where in the second inequality we used the fact that $1-x\le e^{-x}$ for all $x>0$.

On the other hand, for $y\in B(0,r)$ and $\omega_i\notin B(0,(1+\eta)r)$, $|y-\omega_i|\ge \eta |\omega_i|/(1+\eta)$. By the fact that $\varphi_0(x)=\varphi_0(|x|)$ and  the deceasing property of $\varphi_0(r)$,
$$\sup_{y\in B(0,r)}\sum_{\omega_i\notin B(0,(1+\eta)r)}\varphi_0(y-\omega_i)\le \sum_{\omega_i\notin B(0,(1+\eta)r)}\varphi_0(\eta|\omega_i|/(1+\eta)).$$ Hence,
\begin{align*} &\Q\left[\exp\left(\frac{1}{\varphi_0(\eta r)}\sup_{y\in B(0,r)}\sum_{\omega_i\notin B(0,(1+\eta)r)}\varphi_0(y-\omega_i) \right)\right]\\
&\le \Q\left[\exp\left(\frac{1}{\varphi_0(\eta r)} \sum_{\omega_i\notin B(0,(1+\eta)r)}\varphi_0(\eta |\omega_i|/(1+\eta)) \right)\right]\\
&= \exp\left(\rho\int_{\R^d\backslash B(0,(1+\eta)r)}\left(e^{\varphi_0(\eta r)^{-1}\varphi_0(\eta|z|/(1+\eta))}-1\right)\,dz   \right)\\
&\le \exp\left( e \rho(1+\eta)^d \int_{\R^d\backslash B(0, r)}\frac{\varphi_0(\eta z)}{\varphi_0(\eta r)}\,dz\right),\end{align*} where in the last inequality we used the fact that $e^x-1\le e x$ for all $x\in (0,1]$.
By \eqref{e:ppp}, for any $\varepsilon\in (0,1)$ there is a constant $r_1(\eta, \varepsilon)\ge r_0(\kappa,\eta)$ such that for all $r\ge r_1(\eta, \varepsilon)$,
\begin{equation}\label{e:note000}\varphi_0(\eta r)\le \varepsilon^2 (\eta r)^{-d-\alpha}\end{equation} and so
$$\Q\left[\exp\left(\frac{1}{\varphi_0(\eta r)}\sup_{y\in B(0,r)}\sum_{\omega_i\notin B(0,(1+\eta)r)}\varphi_0(y-\omega_i) \right)\right]\le \exp\left[\frac{ c_1 \varepsilon ^2 r^{-\alpha}}{\varphi_0(\eta r)}\right],$$ where $c_1:=c_1(\eta)>0$ depends on $\eta$ but is independent of $\varepsilon$ and $r$.

Below we let $\varepsilon\in (0,1\wedge(1/(2c_1))$.
Hence, according to the Markov inequality and \eqref{e:note000}, for $r$ large enough,
\begin{align*} \Q(G_r(0))\le & \Q\left[\exp\left(\frac{1}{\varphi_0(\eta r)}\sup_{y\in B(0,r)}\sum_{\omega_i\notin B(0,(1+\eta)r)}\varphi_0(y-\omega_i) \right)\ge \exp\left( \frac{\varepsilon r^{-\alpha} }{\varphi_0(\eta r)}\right)\right]\\
\le&  \exp\left[\frac{ c_1\varepsilon^2 r^{-\alpha}}{\varphi_0(\eta r)}- \frac{ \varepsilon r^{-\alpha} }{\varphi_0(\eta r)} \right]\le \exp\left[- \frac{ \varepsilon r^{-\alpha} }{2\varphi_0(\eta r)} \right]\le \exp(-\eta^{d+\alpha}r^d/(2\varepsilon)). \end{align*}
Since $\{G_r(z)\}_{z\in I_r}$ have the same distribution (but are not independent with each other), we find that for any $0<\varepsilon\le \varepsilon_0:=\min\{1,1/(2c_1), \eta^{d+\alpha}/(4w_d\rho(1+2\eta)^d)\}$ and
$r$ large enough,
\begin{align*}\Q(\cup_{ z\in I_r}G_r(z))\le &2\left(\frac{M_{\kappa,\eta}(r)}{(1+\eta)r}\right)^d \exp(-\eta^{d+\alpha}r^d/(2\varepsilon)) \\
=&2(1+\eta)^{-d}r^{-(1+\kappa)d}\exp\left(w_d\rho (1+2\eta)^dr^d-\eta^{d+\alpha}r^d/(2\varepsilon)\right)\\
\le & 2(1+\eta)^{-d}r^{-(1+\kappa)d}\exp\left(-\eta^{d+\alpha}r^d/(4\varepsilon)\right).\end{align*}

Combining with both estimates above, we find that for any $\varepsilon\in (0,\varepsilon_0]$ and any $r$ large enough,
\begin{align*}\Q(\cap_{ z\in I_r}(F_r(z)\cup G_r(z))\le &\Q(\cap_{ z\in I_r}F_r(z))+\Q(\cup_{ z\in I_r}G_r(z))\\
\le&\exp(-r^d )+ 2(1+\eta)^{-d}r^{-(1+\kappa)d}\exp\left(-\eta^{d+\alpha}r^d/(4\varepsilon)\right)\\
\le& c_2\exp(- c_3r^d), \end{align*} where $c_2,c_3>0$ (which depend on $\eta,\kappa,\varepsilon$).
The Borel-Cantelli lemma tells us that $\Q$-almost surely there exists $r_{\kappa, \eta,\varepsilon}(\w)>0$ such that for all $r\ge r_{\kappa, \eta,\varepsilon}(\w)$, there is  $z:=z(r,\w)\in \R^d$ with $|z|\le M_{\kappa,\eta}(r) $ so that both $F_r(z)$ and $G_r(z)$ fail to hold.

Below, we fix this $z$ for all $r\ge r_{\kappa, \eta,\varepsilon}(\w)$.  Since $G_r(z)$ fails to occur,
$$\sup_{y\in B(z,r)}\sum_{\w_i\notin B(z,(1+\eta)r)}\varphi_0(y-\w_i)\le \varepsilon r^{-\alpha} $$ and so, also thanks to $\varphi(x)\le \varphi_0(x)$,
$$ \lambda_{V^\w,B(z,r)}\le \lambda_{\tilde V^\w,B(z,r)}+\varepsilon r^{-\alpha} ,$$ where
$$\tilde V^\w(x)=\sum_{\w_i\in B(z,(1+\eta)r)}\varphi_0(x-\w_i).$$ On the other hand, because $F_r(z)$ does not happen, $\tilde V^\w(x)=0$ for all $x\in \R^d$, and so
$$  \lambda_{\tilde V^\w,B(z,r)}=\lambda_1 (B(z,r)).$$
Therefore, for any $\zeta\in (0,1)$ and $r\ge r_{\kappa, \eta,\varepsilon}(\w)$ large enough,
\begin{align*}\lambda_{V^\w,B(z,r)}\le &\lambda_1 (B(z,r))+\varepsilon r^{-\alpha} \\
= &\lambda_1 (B(0,r))+\varepsilon r^{-\alpha} \le (1+\zeta/2)r^{-\alpha}\lambda_1^{(\alpha)}(B(0,1))+\varepsilon r^{-\alpha}.\end{align*}   where in the last inequality we used Lemma \ref{L:lem-l33} below. The proof is completed by taking $\varepsilon\le \min\{\varepsilon_0,\zeta\lambda_1^{(\alpha)}(B(0,1))/2\}$.\end{proof}

The following was proved in \cite[Proposition 5.1]{KP}.
\begin{lemma}\label{L:lem-l33} Let $Z$ be a symmetric L\'evy process satisfying \eqref{e:a1}, and $\lambda_{1}(D)$ be the principle Dirichlet eigenvalue of the process $Z$ killed when exiting the domain $D$. Then, for any fixed $\zeta>0$, there is $r_0:=r_0(\zeta)>0$ so that for all $r\ge r_0$,
 $$\lambda_{1}(B(0,r))\le (1+\zeta) r^{-\alpha} \lambda_1^{(\alpha)}(B(0,1)),$$ where $\lambda_1^{(\alpha)}(B(0,1))$ is the principle Dirichlet eigenvalue of the symmetric $\alpha$-stable process $Z^{(\alpha)}$ with characteristic exponent $\psi^{(\alpha)}(\xi)$ given in \eqref{e:a1-1} and killed when exiting $B(0,1)$.\end{lemma}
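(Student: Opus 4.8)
\emph{Proof plan.} The plan is a rescaling argument carried out on the Fourier side. Recall that the symmetric L\'evy process $Z$ with exponent $\psi$ as in \eqref{e:sym} has Dirichlet form $\E(u,u)=c_d\int_{\R^d}\psi(\xi)|\widehat u(\xi)|^2\,d\xi$, so that
\[\lambda_{1}(B(0,r))=\inf\Big\{\E(u,u)/\|u\|_{L^2}^2:\ u\in C_c^\infty(B(0,r)),\ u\not\equiv 0\Big\},\]
and likewise $\lambda_1^{(\alpha)}(B(0,1))=\inf\{\E^{(\alpha)}(u,u)/\|u\|_{L^2}^2\}$ with $\E^{(\alpha)}(u,u)=c_d\int_{\R^d}\psi^{(\alpha)}(\xi)|\widehat u(\xi)|^2\,d\xi$ and $\psi^{(\alpha)}$ given in \eqref{alpha}; the normalizing constant $c_d$ cancels in all Rayleigh quotients below. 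First I would fix $\zeta>0$ and produce a \emph{smooth} test function $u\in C_c^\infty(B(0,1))$ with $\|u\|_{L^2}=1$ and $\E^{(\alpha)}(u,u)\le(1+\zeta/4)\lambda_1^{(\alpha)}(B(0,1))$: starting from any near-optimal $u_0\in\F^{(\alpha)}$ vanishing q.e.\ outside $B(0,1)$, dilate it by a factor $1+s$ with $s>0$ small (this multiplies its Rayleigh quotient by $(1+s)^\alpha$ and pushes its support into a compact subset of $B(0,1)$), and then convolve with a mollifier $\phi_\delta$ (this does not increase $\E^{(\alpha)}$, since $|\widehat{\phi_\delta}|\le 1$, and changes $\|\cdot\|_{L^2}$ by an arbitrarily small amount). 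For $r\ge 1$ put $u_r(x):=u(x/r)$; since $\mathrm{supp}\,u$ is a compact subset of $B(0,1)$, $u_r\in C_c^\infty(B(0,r))$ is admissible for $\lambda_1(B(0,r))$.

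Next I would run the Fourier computation. From $\widehat{u_r}(\xi)=r^d\widehat u(r\xi)$ and the substitution $\eta=r\xi$,
\[\frac{\E(u_r,u_r)}{\|u_r\|_{L^2}^2}=\frac{\int_{\R^d}\psi(\eta/r)\,|\widehat u(\eta)|^2\,d\eta}{\int_{\R^d}|\widehat u(\eta)|^2\,d\eta},\]
so it suffices to prove $r^\alpha\int_{\R^d}\psi(\eta/r)|\widehat u(\eta)|^2\,d\eta\to\int_{\R^d}\psi^{(\alpha)}(\eta)|\widehat u(\eta)|^2\,d\eta=\E^{(\alpha)}(u,u)/c_d$ as $r\to\infty$. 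For each fixed $\eta$ one has $r^\alpha\psi(\eta/r)\to\psi^{(\alpha)}(\eta)$: indeed $r^\alpha\psi^{(\alpha)}(\eta/r)=\psi^{(\alpha)}(\eta)$ by the $\alpha$-homogeneity of $\psi^{(\alpha)}$, while $r^\alpha\big(\psi(\eta/r)-\psi^{(\alpha)}(\eta/r)\big)\to0$ directly from \eqref{e:a1}. For the domination I would use two cheap bounds on $\psi$: near the origin, $\psi(\xi)\le\psi^{(\alpha)}(\xi)+|\xi|^\alpha\le C_1|\xi|^\alpha$ for $|\xi|\le1$ (combining \eqref{e:a1}, the homogeneity bound $\psi^{(\alpha)}(\xi)\le C|\xi|^\alpha$, and continuity of $\psi$); and the universal estimate $\psi(\xi)\le 2\big(\int_{\R^d\setminus\{0\}}(|z|^2\wedge1)\,\nu(dz)\big)(1+|\xi|^2)\le C_2|\xi|^2$ for $|\xi|\ge1$. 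Distinguishing the cases $|\eta|\le r$ (so $|\eta/r|\le 1$) and $|\eta|>r$, and using $\alpha\le2$, $r\ge1$, these combine to
\[r^\alpha\psi(\eta/r)\le C_1|\eta|^\alpha+C_2|\eta|^2=:h(\eta)\qquad\text{for all }r\ge1.\]
Since $u\in C_c^\infty$, $\widehat u$ is Schwartz, so $h\,|\widehat u|^2\in L^1(\R^d)$ and dominated convergence gives the claimed limit.

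Putting these together, $r^\alpha\lambda_{1}(B(0,r))\le r^\alpha\E(u_r,u_r)/\|u_r\|_{L^2}^2\to\E^{(\alpha)}(u,u)/\|u\|_{L^2}^2\le(1+\zeta/4)\lambda_1^{(\alpha)}(B(0,1))$, whence there is $r_0:=r_0(\zeta)$ with $\lambda_{1}(B(0,r))\le(1+\zeta)\,r^{-\alpha}\lambda_1^{(\alpha)}(B(0,1))$ for all $r\ge r_0$, as asserted; the case $\alpha=2$, where $\psi^{(2)}(\xi)=\sum_{i,j}a_{ij}\xi_i\xi_j$ and $Z^{(2)}$ is the corresponding Brownian motion, is covered by exactly the same computation. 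The step I expect to be the real obstacle is the domination: the naive choice of letting $u$ itself be the first Dirichlet eigenfunction of $Z^{(\alpha)}$ on $B(0,1)$ does \emph{not} work, since that eigenfunction behaves like $\mathrm{dist}(\cdot,\partial B(0,1))^{\alpha/2}$ near the boundary and therefore fails to lie in $H^1$ when $\alpha\le1$, so $\int|\eta|^2|\widehat u(\eta)|^2\,d\eta=\infty$ and the $|\eta|^2$-tail of $h$ cannot be absorbed uniformly in $r$. Insisting on a \emph{smooth}, compactly supported near-optimal test function — obtained via the dilate-and-mollify step above — is precisely what makes the uniform-in-$r$ domination, and hence the passage to the limit, go through.
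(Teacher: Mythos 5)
The paper does not contain its own proof of this lemma: it simply invokes Proposition~5.1 of \cite{KP}, so there is no in-paper argument for me to compare against line by line. Your self-contained proof is correct and well organised. The Rayleigh-quotient formulation $\lambda_1(B(0,r))\le \E(u_r,u_r)/\|u_r\|_{L^2}^2$ for $u_r(x)=u(x/r)$ with $u\in C_c^\infty(B(0,1))$ is legitimate (only the inequality $\le$ is needed, so no core issues arise). The Fourier-side computation reducing the claim to $r^\alpha\int\psi(\eta/r)|\widehat u(\eta)|^2\,d\eta\to\int\psi^{(\alpha)}(\eta)|\widehat u(\eta)|^2\,d\eta$, the verification of pointwise convergence via the $\alpha$-homogeneity of $\psi^{(\alpha)}$ plus \eqref{e:a1}, and the uniform-in-$r$ domination $r^\alpha\psi(\eta/r)\le C_1|\eta|^\alpha+C_2|\eta|^2$ for $r\ge1$ (splitting $|\eta|\le r$ and $|\eta|>r$ and using $\alpha\le2$) are all sound, and since $\widehat u$ is Schwartz the dominating function is $|\widehat u|^2$-integrable. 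Your closing diagnosis is the right one: taking $u$ to be the $\alpha$-stable ground state fails precisely because its Fourier transform lacks $|\eta|^2$-weighted square-integrability when $\alpha\le1$, and the dilate-and-mollify construction of a smooth near-optimal test function (Rayleigh quotient multiplied by $(1+s)^\alpha$ under dilation, not increased under mollification since $|\widehat{\phi_\delta}|\le1$) is exactly what restores the domination. One cosmetic inaccuracy: the chain $\psi(\xi)\le\psi^{(\alpha)}(\xi)+|\xi|^\alpha$ from \eqref{e:a1} only holds for $|\xi|$ below some threshold $\delta_0$ determined by the $o(\cdot)$; on $\delta_0\le|\xi|\le1$ the bound $\psi(\xi)\le C_1|\xi|^\alpha$ follows instead from boundedness of $\psi$ on compacts (which is what your parenthetical ``continuity of $\psi$'' is meant to supply), so the conclusion $\psi\le C_1|\xi|^\alpha$ on $\{|\xi|\le1\}$ is correct even though the displayed two-step chain is not literally valid on all of that set.
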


\begin{remark}\label{r:3.6}For our use later, we will consider $a^{-2} V^\w$ with $a>1$ instead of $V^\w$. Here, we note that the following conclusions for the potential $a^{-2}V^\w$, which immediately follow from the proof of Proposition \ref{L:lem-l3}.
 \begin{itemize}
 \item[(i)] In the light tailed case (L), for any $a>1$, \eqref{e:eigen1} holds for $\lambda_{a^{-2}V^\w,B(z,r)}$ in place of $\lambda_{V^\w,B(z,r)}$ with some $\kappa>1$ and $\eta,\varsigma\in (0,1)$ (independent of $a$) and for all $r\ge r_{\kappa, \eta, \varsigma,a}(\w)$ (which depends on $a$);
     \item[(ii)]In the heavy tailed case (H),  for any $a>1$, \eqref{e:eigen2} holds for $\lambda_{a^{-2}V^\w,B(z,r)}$ in place of $\lambda_{V^\w,B(z,r)}$ with $\kappa, l>1$ and $\varsigma\in (0,1)$ (all of which are independent of $a$),
$$q^*_1=a^{-2}q_1=\frac{d}{d+\beta}\left(\frac{\beta}{d(d+\beta)}\right)^{\beta/d} \left[\rho w_d \Gamma\left(\frac{\beta}{d+\beta}\right)\right]^{(d+\beta)/d} a^{-2 }K$$ (in place of $q_1$), and for all $r\ge r_{l,\kappa, \varsigma,a}(\w)$ (which depends on $a$).\end{itemize} \end{remark}

\subsection{Refinement of  quenched estimates of $u^{\omega}(t,0)$}\label{sub-3.3}

 \begin{theorem}\label{T:upper} Assume that for any $t, R\ge 1$ with $R\ge \phi(t)$,
\begin{equation}\label{e:upp}\Pp_0(\tau_{B(0,R)}\le t)\le \Phi(t,R),\end{equation} where $\phi(t)$ is an increasing function on $[1,\infty)$ with $\phi(1)\ge1$, and $\Phi(v_1,v_2)$ is a non-negative function defined on $[1,\infty)^2$ such that $v_1\mapsto \Phi(v_1,v_2)$ is increasing for fixed $v_2$ and $v_2\mapsto \Phi(v_1,v_2)$ is deceasing for fixed $v_1$. Let $\kappa_0$ be the constant defined in \eqref{e:con-}. Then,
\begin{itemize}
\item[(i)]  In the light tailed case {\rm(L)}, for any $\varepsilon>0$, $\Q$-almost surely there is $R_\varepsilon(\w)\ge1$ so that for any $R\ge \max\{R_\varepsilon(\w),\phi(t)\}$ and $t\ge 1$,
$$u^{\w}(t,0)\le  \Phi(t,R)+ C(\varepsilon) R^{d/2} \exp\left(-t(1-2\varepsilon)\left(\frac{ k_0}{d\log R}\right)^{\alpha/d}\right),$$ where $C(\varepsilon)$ is independent of $R$ and $t$.

\item[(ii)]  In the heavy tailed case {\rm(H)}, for any $\varepsilon>0$ and $a>1$, $\Q$-almost surely there is $R_{\varepsilon,a}(\w)\ge1$ so that for any $R\ge \max\{R_{\varepsilon,a}(\w),\phi(t)\}$ and $t\ge 1$,
$$u^{\w}(t,0)\le  \Phi(t,R)+
C(\varepsilon,a) R^{d/a} \exp\left(-t(1-2\varepsilon)\left(\frac{ k_0}{d\log R}\right)^{\beta/d}\right),$$ where   $C(\varepsilon,a)>0$ is independent of $R$ and $t$.
\end{itemize}
    \end{theorem}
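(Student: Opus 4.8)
The goal is to feed the eigenvalue bounds from Subsection 3.1 into the upper-bound Proposition \ref{L:lim-u}, optimizing the free parameters $\delta$ and $a$ so that the exponent matches $t(1-2\varepsilon)(k_0/(d\log R))^{(\alpha\wedge\beta)/d}$. First I would recall from Proposition \ref{L:lim-u} (with $D=B(0,R)$) that for every $\w$, every $0<\delta<t$, $R>0$ and $a>1$,
\begin{align*}
u^\w(t,0)\le \Pp_0(\tau_{B(0,R)}\le t)
+\min\Big\{&p(\delta,0)^{1/2}|B(0,R)|^{1/2}\exp\big(-(t-\delta/2)\lambda_{V^\w,B(0,R)}\big),\\
&p(\delta,0)^{1/a}|B(0,R)|^{1/a}\exp\big(-a^{-1}(t-\delta)\lambda_{aV^\w,B(0,R)}\big)\Big\}.
\end{align*}
Under \eqref{e:upp}, the first term is bounded by $\Phi(t,R)$ as soon as $R\ge\phi(t)$, which disposes of it. For the remaining term I use $|B(0,R)|=w_dR^d$ so that $|B(0,R)|^{1/2}=w_d^{1/2}R^{d/2}$ (resp.\ $|B(0,R)|^{1/a}\le C R^{d/a}$), and I use $p(\delta,0)\le p(\delta_0,0)=:c_\delta<\infty$ for a fixed small $\delta_0$, absorbing the $\delta$-dependence into the constant $C(\varepsilon)$ (resp.\ $C(\varepsilon,a)$): this is legitimate because we only need $\delta$ small compared to $t$, so fixing $\delta=\delta_0=1$ (say) is harmless and all the $\delta$-prefactors become $t$-independent, $R$-independent constants.

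\textbf{Light-tailed case (i).} Here I take the first branch of the minimum. By Subsection \ref{section3.1}, and in particular \eqref{e:lim-u}, for any $\eps'\in(0,1)$ there is $\Q$-a.s.\ an $R_{\eps'}(\w)$ such that $\lambda_{V^\w,B(0,R)}\ge (1-\eps')(k_0/(d\log R))^{\alpha/d}$ for all $R\ge R_{\eps'}(\w)$. Plugging this in, and writing $t-\delta_0/2\ge t(1-\eps')$ for $t$ large (absorbing small $t$ into constants since the bound is vacuous there anyway — or more carefully, using $t-\delta_0/2=t-1/2$ and noting $(t-1/2)(1-\eps')\ge t(1-2\eps')$ once $t\ge 1/(2\eps')$), I get
$$u^\w(t,0)\le \Phi(t,R)+C(\eps')\,R^{d/2}\exp\Big(-t(1-2\eps')^2\big(k_0/(d\log R)\big)^{\alpha/d}\Big),$$
and then choosing $\eps'$ with $(1-2\eps')^2\ge 1-2\eps$ (e.g.\ $\eps'\le\eps/2$) gives the claim with $R_\eps(\w):=R_{\eps/2}(\w)$. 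One must also check the small-$t$ range $1\le t\le 1/(2\eps')$ separately: there $u^\w(t,0)\le 1$ trivially and the bound holds by adjusting $C(\eps)$, since $R^{d/2}\exp(-t(\cdots))$ is bounded below away from $0$ on a compact $t$-range and fixed... actually one should simply enlarge $C(\eps)$ so that $C(\eps)R^{d/2}\exp(-t(1-2\eps)(k_0/(d\log R))^{\alpha/d})\ge 1$ whenever $1\le t\le 1/(2\eps')$; since the exponent there is $\ge -t(1-2\eps)(k_0/d)^{\alpha/d}$ when $\log R\ge 1$, and $R^{d/2}\ge 1$, this is immediate.

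\textbf{Heavy-tailed case (ii).} Now I use the second branch of the minimum with the given $a>1$. Apply \eqref{e:lim-u} with exponent $\beta$ and with the potential $aV^\w$: by homogeneity of the construction in Subsection \ref{section3.1} the same argument gives, for any $\eps'\in(0,1)$, a $\Q$-a.s.\ finite $R_{\eps',a}(\w)$ with $\lambda_{aV^\w,B(0,R)}\ge (1-\eps')(k_0(a)/(d\log R))^{\beta/d}$ for $R\ge R_{\eps',a}(\w)$, where $k_0(a)$ is the constant \eqref{e:con-} for the potential $aV^\w$, i.e.\ with $K$ replaced by $aK$ in case (H); thus $k_0(a)=a^{d/\beta}k_0$. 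Hence $a^{-1}\lambda_{aV^\w,B(0,R)}\ge a^{-1}(1-\eps')a^{1}(k_0/(d\log R))^{\beta/d}=(1-\eps')(k_0/(d\log R))^{\beta/d}$ — the factor $a$ cancels exactly, which is the whole point of the scaling. Combining with $a^{-1}(t-\delta_0)\ge (1-\eps')t$ for $t$ large, and absorbing prefactors into $C(\eps,a)$, gives
$$u^\w(t,0)\le\Phi(t,R)+C(\eps,a)R^{d/a}\exp\Big(-t(1-2\eps')^2(k_0/(d\log R))^{\beta/d}\Big),$$
and again picking $\eps'\le\eps/2$ and handling the compact $t$-range as above finishes the proof.

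\textbf{Main obstacle.} The genuine content is entirely in Subsection \ref{section3.1}: the validity of \eqref{e:lim-u}, which rests on the integrated-density-of-states asymptotics of \^Okura \cite[Theorems 6.2, 6.3]{Ok1} together with the covering/Borel–Cantelli argument of \cite[Section 2]{Fu}. Granting that, the theorem is bookkeeping: the one subtle point is the exact cancellation of $a$ in case (ii), which forces us to apply the density-of-states bound to the \emph{rescaled} potential $aV^\w$ (equivalently, to note $k_0$ scales like $K^{d/\beta}$, hence like $a^{d/\beta}$, so that $a^{-1}k_0(a)^{\beta/d}=k_0^{\beta/d}$); and the other is the routine matching of $(t-\delta_0/2)(1-\eps')$ or $a^{-1}(t-\delta_0)(1-\eps')$ against $t(1-2\eps)$, plus the trivial bound $u^\w(t,0)\le1$ to cover small $t$. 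I would also remark that, strictly speaking, \eqref{e:lim-u} as stated controls $\lambda_{V^\w,B(0,R)}$ for the original potential; applying it to $aV^\w$ is legitimate because $aV^\w$ has the same shape-function tail behavior (scaled by $a$), so it still falls in case (L) or (H) with $K$ replaced by $aK$, and $k_0$ transforms accordingly.
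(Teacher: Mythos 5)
Your proof is correct and follows essentially the same route as the paper: feed the eigenvalue lower bound \eqref{e:lim-u} (and, for case (ii), its analogue for the rescaled potential $aV^\w$, where the factor $a$ cancels exactly because $k_0$ scales like $K^{d/\beta}$) into the appropriate branch of the minimum in Proposition \ref{L:lim-u}, then absorb the $\delta$-dependence and handle small $t$ by enlarging the constant. The only blemish is the stray square in ``$(1-2\eps')^2$'' — the combination $(t-1/2)(1-\eps')\ge t(1-2\eps')$ already produces a single factor $(1-2\eps')$ — but this is harmless since it only makes the choice $\eps'\le\eps/2$ more conservative than necessary.
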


 \begin{proof}  We first consider the light tailed case. According to Proposition \ref{L:lim-u} with $\delta$ small enough and \eqref{e:lim-u}, for any $\varepsilon>0$, $\Q$-almost surely  there is $R_\varepsilon(\w)\ge1$ so that for any $t\ge 1$ and $R\ge \max\{R_\varepsilon(\w), \phi(t)\}$,
$$u^{\w}(t,0)\le\Phi(t,R)+C_1(\varepsilon) R^{d/2} \exp\left(-t(1-2\varepsilon)\left(\frac{ k_0}{d\log R}\right)^{\alpha/d}\right).$$

In the heavy tailed case, we note that, from the argument for \eqref{e:lim-u}, for all  $\varepsilon\in (0,1)$ and $a>1$, $\Q$-almost surely  there is $R_{\varepsilon,a}(\w)\ge1$ such that for every $R\ge R_{\varepsilon,a}(\w)$,
$$ a^{-1}\lambda_{aV^\w,B(0,R)}\ge (1-\varepsilon) \left(\frac{k_0}{d \log R}\right)^{ \beta /d},$$ where the right hand side of the inequality above is independent of $a$. With this, we can obtain the desired assertion by following the arguments in the light tailed case.   \end{proof}

 \begin{theorem}\label{T-low} Assume that for any $\delta\in (0,1/2)$ and $r\ge1$
\begin{equation}\label{e:low}\inf_{z\in B(0,r)} p^{B(0,2r)}(\delta,0,z)\ge \Psi_\delta(r) ,\end{equation} where $\Psi_\delta(r)$ is a non-negative deceasing function on $[1,\infty)$. Then,
 \begin{itemize}
 \item[(i)] In the light tailed case {\rm(L)}, for  any $\delta\in (0,1/2)$, $\kappa>1$, $a>1$, $\eta,\varsigma\in (0,1)$, $\Q$-almost surely  there is $R_{\kappa,a,\eta,\varsigma}(\w)\ge1$ so that for any $R\ge R_{\kappa,a,\eta,\varsigma}(\w)$ and $t\ge 1$,
\begin{align*}u^{\w}(t,0)\ge & C(\kappa,\delta,\eta,a)  M_{\kappa,\eta}(R)^{-4\delta d}[\Psi_\delta(2 M_{\kappa,\eta}(R))]^a  \exp\left(-a^2(1+\varsigma)\lambda_1^{(\alpha)}(B(0,1))tR^{-\alpha} \right),\end{align*} where  $$M_{\kappa,\eta}(R)=R^{-\kappa}\exp\left(\frac{w_d\rho}{d}((1+2\eta)R)^d\right),$$ and $\lambda_1^{(\alpha)}(B(0,1))$ is the principle Dirichlet eigenvalue for the symmetric $\alpha$-stable process killed upon exiting $B(0,1)$ and with the exponent $\psi^{(\alpha)}(\xi)$ given in \eqref{e:a1-1}.

\item[(ii)] In the heavy tailed case {\rm(H)}, for any $\delta\in (0,1/2)$, $ \kappa>1$ large enough, $a>1$ and $\varsigma\in (0,1)$, $\Q$-almost surely there is $R_{ \kappa,a,\varsigma}(\w)\ge1$ so that for any $R\ge R_{ \kappa,a, \varsigma}(\w)$ and $t\ge 1$,
\begin{align*}u^{\w}(t,0)\ge & C(\kappa,\delta,\varsigma,a) M_\kappa(R)^{-4\delta d}[\Psi_\delta(2M_\kappa(R))]^a  \exp\left(-  (1+\varsigma)q_1tR^{-\beta} \right),\end{align*} where  $$M_\kappa(R)=R^{-\kappa}\exp({R^d}) $$ and $q_1$ is given by \eqref{q_1}.
\end{itemize}
    \end{theorem}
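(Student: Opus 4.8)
The plan is to plug Proposition \ref{P:prol} into the eigenvalue estimates of Proposition \ref{L:lem-l3} (in the form recorded in Remark \ref{r:3.6}) and optimize over the free balls. First I would fix $a>1$, $b$ with $1/a+1/b=1$, and a small $\delta\in(0,1/2)$. The idea is to apply Proposition \ref{P:prol} with $D=B(z,2r)$ and $D_1=B(z,r)$, where $z=z(r,\w)$ and $r$ come from Proposition \ref{L:lem-l3}(i) (or (ii)) applied to the potential $a^{-2}V^\w$ via Remark \ref{r:3.6}. The subtlety is that $0$ need not lie in $B(z,2r)$ — indeed $|z|$ can be as large as $M_{\kappa,\eta}(r)$ — so before invoking Proposition \ref{P:prol} I would enlarge the outer ball: take $D=B(0, 2M_{\kappa,\eta}(r))$ (which contains both $0$ and $B(z,r)$ once $r$ is large), keep $D_1=B(z,r)$, and use monotonicity $\lambda_{a^{-2}V^\w,B(z,r)}\le (1+\varsigma)\lambda_1^{(\alpha)}(B(0,1))r^{-\alpha}$ from \eqref{e:eigen1}. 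This is exactly the role of the hypothesis \eqref{e:low}: the infimum $\inf_{x\in D_1}p^D(\delta,0,x)$ is bounded below by $\inf_{x\in B(0,2M_{\kappa,\eta}(r))}p^{B(0,2M_{\kappa,\eta}(r))}(\delta,0,x)\ge \Psi_\delta(2M_{\kappa,\eta}(r))$, which explains the appearance of $[\Psi_\delta(2M_{\kappa,\eta}(R))]^a$ in the claimed bound.

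Next I would control the remaining factors in Proposition \ref{P:prol}. The volume term $|D_1|^{-2a^2/b}=|B(z,r)|^{-2a^2/b}$ is a negative power of $r$, which is harmless: it contributes only a polynomially small correction that can be absorbed once we relabel $r\mapsto R$ and note $r\le M_{\kappa,\eta}(R)$-type bounds; more precisely the powers of $r$ combine with $M_{\kappa,\eta}(R)^{-4\delta d}$ after choosing $\delta$ appropriately and the reader should track that the exponent $-4\delta d$ comes from $p(\delta,0)^{-a}p(t-\delta,0)^{-a^2/b}$ together with these volume terms, using the on-diagonal bound $p(\delta,0)\le C\delta^{-d/\alpha'}$ (or the analogue for the Lévy process in case (ii)) and $p(t-\delta,0)\le p(\delta,0)$ for $t$ not too large — actually here one wants $p(t-\delta,0)^{-a^2/b}$, which grows in $t$; but since in the final application $t$ is comparable to $(\log R)^{\text{something}}\cdot R^{\alpha\text{ or }\beta}$ this is again only a factor that is $\exp(o(\text{main exponent}))$, hence absorbable into the $\exp(-a^2(1+\varsigma)\lambda_1^{(\alpha)}(B(0,1))tR^{-\alpha})$ by slightly enlarging $\varsigma$. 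The martingale-type factor $\big(\Ee_0[\exp(\tfrac{b}{a}\int_0^\delta V^\w(Z_s)\,ds):\tau_D>\delta]\big)^{-a/b}$ is bounded below by a constant depending on $\delta$ (and $\w$ through \eqref{e:upper-v}) times a deterministic quantity, since on $[0,\delta]$ the integrated potential is controlled via the local Kato-class property established in Subsection \ref{section2.2}; this yields the constant $C(\kappa,\delta,\eta,a)$.

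Finally I would assemble: choosing $D_1=B(z,r)$ with $z,r$ furnished $\Q$-a.s.\ by Proposition \ref{L:lem-l3}(i) for $a^{-2}V^\w$ (valid for all $r\ge r_{\kappa,\eta,\varsigma,a}(\w)$), Proposition \ref{P:prol} gives
$$u^\w(t,0)\ge C(\kappa,\delta,\eta,a)\,|B(z,r)|^{-2a^2/b}p(\delta,0)^{-a}p(t-\delta,0)^{-a^2/b}[\Psi_\delta(2M_{\kappa,\eta}(R))]^a\exp\big(-a^2 t\lambda_{a^{-2}V^\w,B(z,r)}\big),$$
and then \eqref{e:eigen1} replaces $\lambda_{a^{-2}V^\w,B(z,r)}$ by $(1+\varsigma)\lambda_1^{(\alpha)}(B(0,1))r^{-\alpha}$. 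Relabelling $r$ as $R$ and hiding the polynomial-in-$R$ and sub-main-exponential-in-$t$ factors (at the cost of replacing $\varsigma$ by a slightly larger constant and folding powers of $R$ into $M_{\kappa,\eta}(R)^{-4\delta d}$) gives exactly the stated inequality in (i). For part (ii) the argument is verbatim the same, using Proposition \ref{L:lem-l3}(ii) and Remark \ref{r:3.6}(ii) in place of (i): one takes $D_1=B(z,lR^{\beta/\alpha})$, $D=B(0,2M_\kappa(R))$, and \eqref{e:eigen2} supplies $\lambda_{a^{-2}V^\w,B(z,lR^{\beta/\alpha})}\le(1+\varsigma)q_1^* R^{-\beta}$ with $q_1^*=a^{-2}q_1$; since the final asymptotics will send $a\downarrow 1$, the $a^{-2}$ disappears and one recovers $q_1$. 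The main obstacle is purely bookkeeping: verifying that every polynomial-in-$R$ factor ($|B(z,r)|^{-2a^2/b}$, the various $r^{-\kappa d}$ inside $M_{\kappa,\eta}$) and every factor that grows in $t$ ($p(t-\delta,0)^{-a^2/b}$) is genuinely negligible against $M_{\kappa,\eta}(R)^{-4\delta d}$ and the leading exponential, respectively, for the range of $(t,R)$ that will actually be used in Section \ref{section4}; no single estimate is hard but the constants must be threaded carefully so that, in the subsequent optimization, sending $a\downarrow1$, $\delta\downarrow0$, $\varsigma\downarrow0$, $\eta\downarrow0$ recovers the sharp constants in Theorems \ref{Th-1} and \ref{Thm1.2}.
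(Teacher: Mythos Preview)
Your approach is essentially the paper's: choose $D=B(0,2M_{\kappa,\eta}(r))$, $D_1=B(z,r)$ (the paper uses $B(z,(1+\eta)r)$, an inessential difference), feed Proposition~\ref{P:prol} with the eigenvalue bound from Proposition~\ref{L:lem-l3}/Remark~\ref{r:3.6}, and use~\eqref{e:low} to control $\inf_{x\in D_1}p^D(\delta,0,x)$. However, two pieces of your bookkeeping are wrong.

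First, the factor $p(t-\delta,0)^{-a^2/b}$ is \emph{helpful}, not harmful: since $t\mapsto p(t,0)$ is decreasing and $t\ge 1>2\delta$, one has $p(t-\delta,0)\le p(\delta,0)$, hence $p(t-\delta,0)^{-a^2/b}\ge p(\delta,0)^{-a^2/b}$, a constant depending only on $\delta,a$. There is no need to absorb anything into the main exponential by enlarging $\varsigma$; indeed that manoeuvre would be illegitimate here because the statement must hold for \emph{all} $t\ge 1$ and $R\ge R_*(\w)$ independently, not just for $t$ linked to $R$ as in the later optimization.

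Second, you misattribute the factor $M_{\kappa,\eta}(R)^{-4\delta d}$. It does not come from the on-diagonal terms $p(\delta,0)^{-a}p(t-\delta,0)^{-a^2/b}$ (these are constants in $R$), but from the expectation $\big(\Ee_0[\exp(\tfrac{b}{a}\int_0^\delta V^\w(Z_s)\,ds):\tau_D>\delta]\big)^{-a/b}$. Since $Z_s\in D=B(0,2M_{\kappa,\eta}(r))$ on $\{\tau_D>\delta\}$, \eqref{e:upper-v} gives $V^\w(Z_s)\le 3d\log(2M_{\kappa,\eta}(r))$, so the expectation is $\le (2M_{\kappa,\eta}(r))^{3d\delta\,b/a}$ and its $-a/b$ power is $\ge (2M_{\kappa,\eta}(r))^{-3d\delta}$. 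The volume factor $|D_1|^{-2a^2/b}\asymp r^{-2a^2 d/b}$ is then absorbed into an extra $M_{\kappa,\eta}(r)^{-\delta d}$ (trivially, since $M_{\kappa,\eta}(r)$ is exponential in $r^d$), yielding the stated $M_{\kappa,\eta}(R)^{-4\delta d}$. The local Kato property is not what is used here; the pointwise bound~\eqref{e:upper-v} is.

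For part~(ii), note also that the $a^2$ in $\exp(-a^2t\lambda_{a^{-2}V^\w,D_1})$ cancels \emph{exactly} against the $a^{-2}$ in $q_1^*=a^{-2}q_1$ from Remark~\ref{r:3.6}(ii), which is why the final bound reads $\exp(-(1+\varsigma)q_1 tR^{-\beta})$ for every $a>1$; you do not need to send $a\downarrow 1$ at this stage.
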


\begin{proof} We only prove the assertion (i), since the assertion (ii) can be verified similarly by applying Proposition \ref{L:lem-l3}(ii) and Remark \ref{r:3.6}(ii) instead of Proposition \ref{L:lem-l3}(i) and Remark \ref{r:3.6}(i), respectively.

For any $a>1$, $\kappa>1$ and $\eta, \varsigma\in (0,1)$, let $D=B(0,2M_{\kappa, \eta}(r))$ and $D_1=B(z,(1+\eta)r)$ for $r\ge r_{\kappa,\eta,\varsigma,a}(\w)$, where $r_{\kappa,\eta,\varsigma,a}(\w)$, $M_{\kappa,\eta}(r)$ and  $z:=z(r,\w)$ are given in Proposition \ref{L:lem-l3}(i) and Remark \ref{r:3.6}(i). Since $|z|\le M_{\kappa,\eta}(r)$, $D_1\subset D$ for $r$ large enough. Then, according to Propositions \ref{P:prol} and \ref{L:lem-l3}(i) as well as Remark \ref{r:3.6}(i), for any $\delta\in (0,1/2)$, $t\ge1$ and $r$ large enough,
\begin{align*} u^\w(t,0)&\ge p(\delta,0)^ap({t-\delta},0)^{-a^2/b}(w_d((1+\eta)r)^d)^{-2a^2/b} \exp\left(-3d\delta\log (2M_{\kappa,\eta}(r))\right)\\
&\quad\times [\Psi_\delta( 2M_{\kappa,\eta}(r))]^a\exp\left(-a^2(1+\varsigma)tr^{-\alpha} \lambda_1^{(\alpha)}(B(0,1))\right)\\
&\ge C_1(\delta,\eta,a ) r^{-2a^2d/b}  M_{\kappa,\eta}(r)^{-3\delta d}\\
&\quad \times [\Psi_\delta( 2M_{\kappa,\eta}(r))]^a  \exp\left(-a^2(1+\varsigma)tr^{-\alpha} \lambda_1^{(\alpha)}(B(0,1))\right)\\
&\ge C_2(\kappa,\delta,\eta,a )    M_{\kappa,\eta}(r)^{-4\delta d} [\Psi_\delta(2 M_{\kappa,\eta}(r))]^a  \exp\left(-a^2(1+\varsigma)tr^{-\alpha} \lambda_1^{(\alpha)}(B(0,1))\right), \end{align*} where in the first inequality $b>1$ such that $1/a+1/b=1$ and we used \eqref{e:upper-v} and \eqref{e:low}, and the second inequality follows from the fact that for all $t\ge1$ and $\delta\in (0,1/2)$,
$p(t-\delta,0)\le p(\delta,0),$ due to the deceasing property of the function $t\mapsto p(t,0)$.  The proof is finished. \end{proof}

\section{Examples}\label{section4}
In this section, we will present the proofs of Theorems \ref{Th-1} and \ref{Thm1.2}. Note that, both symmetric L\'evy processes in these two examples are rotationally invariant  and satisfy the assumptions in Subsection \ref{section2.1}. Meanwhile, the shape function $\varphi(x)=1\wedge |x|^{-d-\beta}$ with $\beta\in (0,\infty]$ fulfills the assumptions in Subsection \ref{section2.2} as well.

\subsection{Rotationally invariant symmetric $\alpha$-stable processes}

\begin{proof}[Proof of Theorem $\ref{Th-1}$]
For rotationally symmetric $\alpha$-stable process $Z$ with $\alpha\in (0,2)$, $\psi(\xi)=c_0|\xi|^\alpha$ for some $c_0>0$, and so \eqref{e:a1} holds with $\psi^{(\alpha)}(\xi)=\psi(\xi)$. Thus, for given shape function $\varphi(x)=1\wedge|x|^{-d-\beta}$ with $\beta\in (0,\infty]$, the light tailed case (resp.\ the heavy tailed case) corresponds to $\beta>\alpha$ (resp. $\beta\in(0,\alpha)$).
Furthermore, it is well known that, for rotationally symmetric $\alpha$-stable process $Z$, \eqref{e:upp} holds with
$$\Phi(t,r)=C^*t r^{-\alpha}$$ and $\phi(t)=t^{1/\alpha}$, and
\eqref{e:low} holds with
$$\Psi_\delta (r)\ge \frac{C_* \delta }{r^{d+\alpha}};$$ see \cite{CK,CKS}.

(i)  We first consider $\beta>\alpha$, which is refereed to the light tailed case. According to Theorem \ref{T:upper}(i),  for any $\varepsilon>0$, $\Q$-almost surely there is $R_\varepsilon(\w)\ge1$ so that for any $t\ge 1$ and  $R\ge \max\{R_\varepsilon(\w), t^{1/\alpha}\}$,
$$u^\w(t,0)\le\frac{ C_1t}{R^\alpha}+C_2(\varepsilon) R^{d/2} \exp\left(-t(1-2\varepsilon)\left(\frac{ k_0}{d\log R}\right)^{\alpha/d}\right),$$ where $C_2(\varepsilon)>0$ is a constant independent of $R$ and $t$, and $k_0=\rho w_d[\lambda_1^{(\alpha)}(B(0,1))]^{d/\alpha} $ with $\lambda_1^{(\alpha)}(B(0,1))$ being the principle Dirichlet eigenvalue for the rotationally  symmetric $\alpha$-stable process $Z$ killed upon exiting $B(0,1)$.
Letting
$$R=\exp\left((1-2\varepsilon)^{d/(\alpha+d)}(\alpha+d/2)^{-d/(\alpha+d)} \left(\frac{k_0}{d}\right)^{\alpha/(d+\alpha)} t^{d/(d+\alpha)}\right)$$ for $t$ large enough, we arrive at the desired upper bound by letting $\varepsilon \to0$.

On the other hand, by Theorem \ref{T-low}(i), for any $\kappa,a>1$, $\delta\in (0,1/2)$ and $ \eta,\varsigma\in (0,1)$, $\Q$-almost surely there is $R_{\kappa, a,\eta,\varsigma}(\w)\ge1$ so that for any $R\ge R_{\kappa, a,\eta,\varsigma}(\w)$ and $t\ge 1$,
$$u^\w(t,0)\ge C  R^{(4\delta d+(d+\alpha)a)\kappa}\exp\left(-A R^d-BtR^{-\alpha}\right),$$ where
$$A=\frac{w_d\rho}{d}(1+2\eta)^d[a(d+\alpha)+4\delta d],\quad B =a^2(1+\varsigma)\lambda_1^{(\alpha)}(B(0,1))$$ and $C>0$ is a constant independent of $t$ and $R$.
Letting $$R=\left(\frac{\alpha B}{d A}\right)^{1/(d+\alpha)} t^{1/(d+\alpha)}$$ for $t$ large enough, we prove the lower bound by taking $\delta,\eta,\varsigma  \to 0$ and $a\to 1$.

(ii) For heavy tailed case (i.e., $\beta\in (0,\alpha)$), it follows from Theorem \ref{T:upper}(ii) that for all $a>1$ and $\varepsilon\in (0,1)$, $\Q$-almost surely  there is $R_{a,\varepsilon}(\w)\ge1$ such that
for any $t\ge 1$ and  $R\ge \max\{R_{a,\varepsilon}(\w), t^{1/\alpha}\}$,
$$u^\w(t,0)\le\frac{ C_1t}{R^\alpha}+C_2(a,\varepsilon) R^{d/a} \exp\left(-t(1-2\varepsilon)\left(\frac{ k_0}{d\log R}\right)^{\beta/d}\right),$$ where $k_0$ is given by \eqref{e:con-} in case {\rm(H)}. Then, choosing $$R=\exp\left((1-2\varepsilon)^{d/(\beta+d)}(\alpha+d/a)^{-d/(\beta+d)} \left(\frac{k_0}{d}\right)^{\beta/(d+\beta)} t^{d/(d+\beta)}\right)$$ for $t$ large enough, we arrive at the upper bound by letting $\varepsilon\to0$ and $a\to\infty$.

Due to Theorem \ref{T-low}(ii), for any $\kappa,a>1$, $\delta\in(0,1/2)$  and $ \varsigma\in (0,1)$, $\Q$-almost surely there is $R_{\kappa,  a,\varsigma}(\w)\ge1$ so that for any  $R\ge R_{\kappa, a, \varsigma}(\w)$ and $t\ge 1$,
$$u^\w(t,0)\ge   C  R^{(4\delta d+(d+\alpha)a)\kappa}\exp\left(-A R^d-BtR^{-\beta}\right),$$ where
$$A=a(d+\alpha)+4\delta d ,\quad B =   (1+\varsigma)q_1.$$
Letting $$r=\left(\frac{\beta B}{d A}\right)^{1/(d+\beta)} t^{1/(d+\beta)}$$ for $t$ large enough, we prove the lower bound by taking $\varsigma,\delta \to 0$ and $a\to 1$.
\end{proof}

\begin{remark} \begin{itemize}
\item[(i)] We used two different ways to estimate $I_1$ in the proof of Proposition \ref{L:lim-u}, which yield two different quenched upper bounds for $u^\w(t,0)$ in Theorem \ref{T:upper}. For this example, if we follow the argument for light tailed case (i.e., $\beta\in (\alpha,\infty]$)
to deal with the heavy tailed case (i.e., $\beta\in (0,\alpha)$), then we can only obtain that when $\beta\in (0,\alpha)$, $\Q$-almost surely for all $x\in \R^d$,
$$\limsup_{t\to\infty} \frac{u^\w(t,x)}{t^{d/(d+\beta)}}\le -\frac{\alpha}{(\alpha+d/2)^{d/(d+\beta)}}A_2,$$
which is weaker than the desired assertion for the upper bound in Theorem \ref{Th-1}(ii).
\item[(ii)] As mentioned in Section \ref{section1}, rate functions for the quenched and annealed asymptoticses of $u^\w(t,x)$ for rotationally symmetric $\alpha$-stable processes are same. However, we can not obtain that the associated liminf and limsup constants agree for the quenched asymptotics. The reason why our argument can not yield precise results is due to that both estimates \eqref{e:upp} and \eqref{e:low} are of the polynomial-form for symmetric $\alpha$-stable processes.
\end{itemize} \end{remark}

\subsection{Rotationally symmetric processes with large jumps of exponential decay}

\begin{proof}[Proof of Theorem $\ref{Thm1.2}$]For rotationally symmetric pure jump L\'evy process $Z$ with L\'evy measure $\nu$ given in Theorem \ref{Thm1.2}, by \cite[Proposition 5.2(i)]{KP}, \eqref{e:a1} holds with $\alpha=2$ and  $$a_{ij}=\int_{\R^d\backslash \{0\}}z_iz_j\,\nu(dz),\quad 1\le i,j\le d.$$
 Thus, for given shape function $\varphi(x)=1\wedge|x|^{-d-\beta}$ with $\beta\in (0,\infty]$, the light tailed case (resp.\ the heavy tailed case) corresponds to $\beta>2$ (resp. $\beta\in(0,2)$).

Furthermore, according to \cite[Theorems 1.2 and 1.4]{CKK}, for any $t\ge1$ and $x\in \R^d$ with $|x|\ge 2 t^{1/((2-\theta)\vee 1)}$, the transition density function $p(t,x)$ of the process $X$ satisfies that
$$p(t,x)\le c_1\exp\left(-c_2|x|^{\theta\wedge1} \left(\log \frac{|x|}{t}\right)^{(\theta-1)^+/\theta}\right).$$
This along with Lemma \ref{L:3.2} below yields that
\eqref{e:upp} holds with
$$\Phi(t,r)=c_3 \exp(-c_4 r^{\theta \wedge1})$$ and $\phi(t)=2 t $. On the other hand, by \cite[Theorem 1.1]{PP}, we know that
\eqref{e:low} holds with
$$\Psi_\delta (r)\ge c_5 \exp(-c_6 r^{\theta \wedge 1} \left(\log  r \right)^{(\theta-1)^+/\theta}).$$

For simplicity, we only prove the light tailed case (i.e., $\beta\in (2,\infty)$), since the heavy tailed case can be treated similarly. First,  by Theorem \ref{T:upper}(i),  for any $\varepsilon>0$, $\Q$-almost surely there is $R_\varepsilon(\w)\ge1$ so that for any $t\ge 1$ and  $R\ge \max\{R_\varepsilon(\w), 2 t \}$,
$$u^\w(t,0)\le c_3 \exp(-c_4 R^{\theta \wedge1})+C_1(\varepsilon) R^{d/2} \exp\left(-t(1-2\varepsilon)\left(\frac{ k_0}{d\log R}\right)^{2/d}\right),$$ where $C_1(\varepsilon)>0$ is a constant independent of $R$ and $t$, and
$k_0=\rho w_d[\lambda_1^{(2)}(B(0,1))]^{d/2} $ with $\lambda_1^{(2)}(B(0,1))$ being the principle Dirichlet eigenvalue for Brownian motion killed upon exiting $B(0,1)$ and with the covariance matrix $(a_{ij})_{1\le i,j\le d}$ above.
Letting
$R=C t^{1/(1\wedge \theta)}$ for large $C$ and $t$, we prove the desired upper bound by taking $\varepsilon \to 0.$

On the other hand, according to Theorem \ref{T-low}(i), for any $\kappa,a>1$ and $ \eta,\varsigma\in (0,1)$, $\Q$-almost surely there is $R_{\kappa, a,\eta,\varsigma}(\w)\ge1$ so that for any $R\ge R_{\kappa, a,\eta,\varsigma}(\w)$ and $t\ge 1$,
\begin{align*}u^\w(t,0)\ge &C_2\exp\left(-C_3 (M_{\kappa,\eta}(R))^{a(\theta\wedge1)}(\log M_{\kappa,\eta}(R))^{(\theta-1)^+/\theta}\right)\\
&\times \exp\left(-a^2(1+\varsigma)\lambda_1^{(2)}(B(0,1)) tR^{-2}\right)\\
\ge& C_4\exp\left[-C_5R^{-\kappa a(\theta\wedge1)+d(\theta-1)^+/\theta}\exp\left(a(\theta\wedge1) \frac{w_d\rho}{d}((1+2\eta)R)^d\right) \right]\\
&\times \exp\left(-a^2(1+\varsigma)\lambda_1^{(2)}(B(0,1)) tR^{-2}\right).\end{align*}
Choosing $\kappa$ large enough and $$R= \frac{1}{1+2\eta}\left( \frac{d}{a(1\wedge \theta)w_d \rho} \right)^{1/d} (\log t)^{1/d}$$ for $t$ large enough, we prove the lower bound by taking $\eta,\varsigma  \to 0$ and $a\to 1$.
\end{proof}

\begin{lemma}\label{L:3.2}For any L\'evy process $Z$, it holds for all $t,R>0$  that
$$ \Pp_0(\tau_{B(0,R)}\le t)\le 2\sup_{s\in [t,2t]}\Pp_0(|Z_s|\ge R/2).$$  \end{lemma}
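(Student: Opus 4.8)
The plan is to prove Lemma \ref{L:3.2} by a standard first-exit/reflection-type argument adapted to the L\'evy setting, exploiting the strong Markov property at the exit time $\tau_{B(0,R)}$ together with the fact that $Z$ has independent and stationary increments. Write $\tau:=\tau_{B(0,R)}$. On the event $\{\tau\le t\}$ we know $Z_\tau\notin B(0,R)$, hence $|Z_\tau|\ge R$ by right-continuity of paths (so there is no issue about the boundary). The key idea is that if, in addition, the process does not travel more than $R/2$ during the remaining time interval $[\tau, 2t]$, then $|Z_{s}|\ge R/2$ for some (indeed every) $s$ in a suitable subinterval of $[t,2t]$; conversely, conditionally on $\tau$, the increment $Z_{\tau+u}-Z_\tau$ for $u\ge0$ is an independent copy of the process started at $0$, so the probability of such a large displacement is controlled uniformly.

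Concretely, first I would fix $t,R>0$ and condition on $\{\tau\le t\}$. Pick the deterministic time $s_0:=2t-\tau\in[t,2t]$ (this lies in $[t,2t]$ precisely because $0\le\tau\le t$). Then
\begin{align*}
\{\tau\le t\}\subseteq \{|Z_{s_0}|\ge R/2\}\cup\{\tau\le t,\ |Z_{s_0}-Z_\tau|> R/2\},
\end{align*}
since on $\{\tau\le t\}$ we have $|Z_\tau|\ge R$, so if also $|Z_{s_0}|<R/2$ then $|Z_{s_0}-Z_\tau|\ge |Z_\tau|-|Z_{s_0}|> R/2$. The first event has probability at most $\sup_{s\in[t,2t]}\Pp_0(|Z_s|\ge R/2)$ after integrating over the (random but $[t,2t]$-valued) value of $s_0$; more carefully, one conditions on $\tau$ and uses that for each fixed value $\tau=v\le t$, $\Pp_0(|Z_{2t-v}|\ge R/2)\le \sup_{s\in[t,2t]}\Pp_0(|Z_s|\ge R/2)$. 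For the second event, by the strong Markov property at $\tau$ the increment $Z_{\tau+u}-Z_\tau$, $u\ge0$, is distributed as $Z$ started from $0$ and independent of $\mathcal F_\tau$; here the relevant elapsed time is $u=s_0-\tau=2t-2\tau\in[0,2t]$, so
\begin{align*}
\Pp_0(\tau\le t,\ |Z_{s_0}-Z_\tau|> R/2)\le \Pp_0\Big(\tau\le t,\ \sup_{0\le u\le 2t}|Z_{\tau+u}-Z_\tau|> R/2\Big),
\end{align*}
and one would like to again bound this by $\sup_{s\in[t,2t]}\Pp_0(|Z_s|\ge R/2)$. The cleanest route here is to choose $s_0$ differently so that the displacement happens over a single \emph{fixed} time: take $s_0:=t+\tau$ if $\tau\ge$ something, or simply note that on $\{\tau\le t\}$ the time $2t-\tau\ge t$, and the conditional law of $|Z_{2t-\tau}-Z_\tau|$ given $\tau=v$ equals the law of $|Z_{2t-2v}|$ under $\Pp_0$ — which we can bound by $\sup_{s\in[0,2t]}\Pp_0(|Z_s|\ge R/2)$. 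To land on exactly $\sup_{s\in[t,2t]}$, one uses instead the split point $s_0=2t-\tau$ only for the first term and observes that $2t-2v$ need not lie in $[t,2t]$; this is the delicate bookkeeping point.

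The main obstacle is precisely making the time-index ranges line up so that only $\sup_{s\in[t,2t]}$ (not $\sup_{s\in[0,2t]}$) appears. The trick is to use the exit time to ``land'' the process outside $B(0,R)$ and then run it for an \emph{additional} time that is bracketed in $[t,2t]$: on $\{\tau\le t\}$ we have both $\tau\le t$ and, running from the exit for a further $t$ units, $\tau+t\in[t,2t]$. So I would instead write, using the strong Markov property at $\tau$,
\begin{align*}
\Pp_0(\tau\le t)&=\Ee_0\big[\I_{\{\tau\le t\}}\Pp_0\big(|Z_{\tau+t}|\ge R/2\ \big|\ \mathcal F_\tau\big)\big]+\Ee_0\big[\I_{\{\tau\le t\}}\Pp_0\big(|Z_{\tau+t}|< R/2\ \big|\ \mathcal F_\tau\big)\big].
\end{align*}
In the first term, $\Pp_0(|Z_{\tau+t}|\ge R/2\mid\mathcal F_\tau)\le \sup_{s\in[t,2t]}\Pp_0(|Z_s|\ge R/2)$ when we realize $Z_{\tau+t}$ as $Z_\tau+(Z_{\tau+t}-Z_\tau)$: actually one must be careful — $Z_{\tau+t}$ given $\mathcal F_\tau$ is $Z_\tau+W_t$ with $W$ an independent copy of the process, not obviously concentrated near $0$ or far. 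Here is the resolution: on $\{\tau\le t\}$, $|Z_\tau|\ge R$, so either $|Z_{\tau+t}|\ge R/2$ or $|W_t|=|Z_{\tau+t}-Z_\tau|>R/2$; the conditional probability of the latter given $\mathcal F_\tau$ is exactly $\Pp_0(|Z_t|\ge R/2)\le\sup_{s\in[t,2t]}\Pp_0(|Z_s|\ge R/2)$, and the unconditional probability of $\{|Z_{\tau+t}|\ge R/2,\ \tau\le t\}$ is at most $\sup_{s\in[t,2t]}\Pp_0(|Z_s|\ge R/2)$ by conditioning on the value $\tau=v\in[0,t]$ and noting $v+t\in[t,2t]$. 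Summing the two contributions gives the factor $2$. I expect the only real care needed is the measurability/strong-Markov justification (valid since $\tau$ is a stopping time and $Z$ is a L\'evy process, hence a Feller process with quasi-left-continuity), and the elementary triangle-inequality observation $|Z_\tau|\ge R \Rightarrow \{|Z_{\tau+t}|<R/2\}\subseteq\{|Z_{\tau+t}-Z_\tau|>R/2\}$.
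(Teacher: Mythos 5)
Your proposal contains a genuine gap in both of the routes it explores, and in each case the root cause is that you split the event at a \emph{random} time rather than the fixed deterministic time $2t$.

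In your first route you set $s_0=2t-\tau$ and try to bound $\Pp_0(|Z_{s_0}|\ge R/2)$ by $\sup_{s\in[t,2t]}\Pp_0(|Z_s|\ge R/2)$ ``after integrating over $\tau$''. This is not valid: $s_0$ is a random time correlated with the path, so $\Pp_0(|Z_{s_0}|\ge R/2)$ cannot be bounded by a supremum of the marginals. You also correctly notice that the post-$\tau$ elapsed time $s_0-\tau=2t-2\tau$ ranges over $[0,2t]$, not $[t,2t]$ — this is precisely a symptom of having chosen $s_0=2t-\tau$ rather than $s_0=2t$. In your second route you split at $\tau+t$ and claim $\Pp_0(\tau\le t,\ |Z_{\tau+t}|\ge R/2)\le \sup_{s\in[t,2t]}\Pp_0(|Z_s|\ge R/2)$ ``by conditioning on $\tau=v$ and noting $v+t\in[t,2t]$''. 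This is the fallacious step: conditioning on $\{\tau=v\}$ changes the law of $Z_{v+t}$. Indeed on $\{\tau=v\}$ one has $|Z_v|\ge R$, and writing $Z_{v+t}=Z_v+(Z_{v+t}-Z_v)$ the conditional probability is $\Pp_0\big(|Z_v+W_t|\ge R/2\,\big|\,\tau=v\big)$ with $W$ an independent copy of $Z$; since $|Z_v|\ge R$ this is at least $1-\Pp_0(|W_t|\ge R/2)$, which is close to $1$ for $R$ large, hence not comparable to $\sup_{s\in[t,2t]}\Pp_0(|Z_s|\ge R/2)$. So the bound you assert for the first term does not follow from the reasoning given, and it essentially begs the conclusion of the lemma.

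The paper's proof uses the fixed time $2t$ as the split point, which dissolves both difficulties at once. One writes
\begin{align*}
\Pp_0(\tau\le t)&=\Pp_0\big(\tau\le t,\ |Z_{2t}|\ge R/2\big)+\Pp_0\big(\tau\le t,\ |Z_{2t}|<R/2\big).
\end{align*}
The first term is at most $\Pp_0(|Z_{2t}|\ge R/2)\le\sup_{s\in[t,2t]}\Pp_0(|Z_s|\ge R/2)$ trivially, since $2t$ is deterministic. For the second term, on $\{\tau\le t\}$ one has $|Z_\tau|\ge R$, so $|Z_{2t}|<R/2$ forces $|Z_{2t}-Z_\tau|>R/2$; by the strong Markov property at $\tau$ the increment $(Z_{\tau+u}-Z_\tau)_{u\ge0}$ is an independent copy of $Z$ started at $0$, and the elapsed time $2t-\tau$ lies in $[t,2t]$ because $\tau\le t$, so the conditional probability is at most $\sup_{s\in[t,2t]}\Pp_0(|Z_s|\ge R/2)$. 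Adding gives the factor $2$. You had all the ingredients (triangle inequality $|Z_\tau|\ge R$, strong Markov, the observation that the remaining time should land in $[t,2t]$); the missing idea is simply to anchor the decomposition at the deterministic endpoint $2t$.
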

\begin{proof} For any $t, R>0$,
\begin{align*}\Pp_0(\tau_{B(0,R)}\le t)&=\Pp_0\left(\max_{s\in (0,t]}|Z_s|\ge R\right)\\
&=\Pp_0\left(\max_{s\in (0,t]}|Z_s|\ge R, |Z_{2t}|\ge R/2\right)+\Pp_0\left(\max_{s\in (0,t]}|Z_s|\ge R, |Z_{2t}|\le R/2\right)\\
&\le \Pp_0( |Z_{2t}|\ge R/2) + \Ee_0\left(\I_{\{\tau_{B(0,R)}\le t\}}\Pp_{Z_{\tau_{B(0,R)}}}(|Z_{2t}-Z_{\tau_{B(0,R)}}|\ge R/2)\right)\\
&\le 2\sup_{s\in [t,2t]}\Pp_0(|Z_s|\ge R/2). \end{align*} The proof is complete. \end{proof}
\section{Appendix}
\subsection{Proof of Proposition \ref{L:lem-l3}(ii)}
In this part, we will present the proof of Proposition \ref{L:lem-l3}(ii). The proof mainly follows from the argument in \cite[Section 4.1]{Fuk2}. Note that since the paper \cite{Fuk2} studied  second order asymptotics for Brownian motions in a heavy tailed Poissonian potential, the proof is much more involved. In particular, the argument in \cite[Section 4.1]{Fuk2} only works for part of heavy tailed potentials (i.e., for the shape function $\varphi(x)=1\wedge |x|^{-(d+\beta)}$ with $\beta\in (0,2)$ and $d+\beta\ge 2$). Now, in our setting we can prove Proposition \ref{L:lem-l3}(ii) holds for all heavy tailed potentials, because only the first order asymptotics for the first Dirichlet eigenvalue is concerned here.

To highlight differences from the argument in \cite[Section 4.1]{Fuk2}, we rewrite Proposition \ref{L:lem-l3}(ii) as follows, where notations are the same as those in \cite{Fuk2}.

\begin{proposition}\label{P:5.1} In the heavy tailed case {\rm(H)}, for $M>1$ large enough, any $\kappa>1$ and $\varepsilon>0$, $\Q$-almost surely there exists $t_{M,\kappa,\varepsilon}(\w)>0$ such that, for all $t\ge t_{M,\kappa,\varepsilon}(\w)$ there is $z:=z(t,\w)\in \R^d$ so that $|z|\le t (\log t)^{-\kappa}$ and
$$\lambda_{B(z, M(\log t)^{\beta /(d \alpha)})}\le (1+\varepsilon)\lambda(t),$$ where
$\lambda(t)=q_1(\log t)^{-\beta/d}$, and $q_1$ is given by \eqref{q_1}.
\end{proposition}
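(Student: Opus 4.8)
The plan is to adapt the construction of \cite[Section 4.1]{Fuk2}, keeping only leading–order information, which shortens it considerably. I work with the equivalent formulation of Proposition \ref{L:lem-l3}(ii), the two statements being identified through $r^d=\log t$, $lr^{\beta/\alpha}=M(\log t)^{\beta/(d\alpha)}$, $r^{-\beta}=(\log t)^{-\beta/d}$ and $M_\kappa(r)=r^{-\kappa}e^{r^d}$. Put $\lambda=q_1r^{-\beta}$; from \eqref{q_1} and \eqref{e:con-} one has, in case {\rm (H)}, $k_0=d\,q_1^{d/\beta}$, hence $k_0\lambda^{-d/\beta}=dr^d$. The crucial input is the lower–tail estimate for the potential at a point: $V^\w(x)$ is infinitely divisible with $\Ee_\Q[e^{-\mu V^\w(x)}]=\exp\big(-\rho\int_{\Rd}(1-e^{-\mu\varphi(y)})\,dy\big)$, and since $\varphi(y)\sim K|y|^{-d-\beta}$ a change of variables gives $\rho\int_{\Rd}(1-e^{-\mu\varphi(y)})\,dy=(1+o(1))\rho w_d\Gamma\big(\tfrac{\beta}{d+\beta}\big)K^{d/(d+\beta)}\mu^{d/(d+\beta)}$ as $\mu\to\infty$; optimising $e^{\mu s}\Ee_\Q[e^{-\mu V^\w(x)}]$ over $\mu>0$ (a Legendre transform) yields
\[
\Q\big(V^\w(x)\le s\big)=\exp\big(-(1+o(1))\,k_0\,s^{-d/\beta}\big),\qquad s\downarrow0,
\]
with precisely the constant $k_0$ of \eqref{e:con-}, the matching lower bound coming from tilting the Poisson intensity to the conjugate density $\rho\,e^{-\mu^\ast\varphi(\cdot)}$. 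Thus the event ``$V^\w\approx q_1r^{-\beta}$ near a fixed point'' has logarithmic cost $\approx dr^d$, which the candidate count below must (and does) match.

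Next the construction. Fix $l>1$ large, $\kappa>1$, $\varsigma\in(0,1)$, and a small auxiliary $\varepsilon=\varepsilon(\varsigma)>0$; set $s_r=lr^{\beta/\alpha}$ (so $s_r=o(r)$ since $\beta<\alpha$), $R_0=s_r/\varepsilon=o(r)$ and $L_r=r\log r$. Partition $\{z:|z|\le M_\kappa(r)\}$ into $N_r$ disjoint boxes of side comparable to $L_r$, with centres $\{z_j\}$; since $M_\kappa(r)=r^{-\kappa}e^{r^d}$, $\log N_r=(1+o(1))\,d\,r^d$. Split $V^\w=V^\w_j+W^\w_j$ with $V^\w_j(x)=\sum_{\w_i\in B(z_j,L_r)}\varphi(x-\w_i)$. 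By a Poisson large–deviation bound (the mean of $W^\w_j$ on $B(z_j,s_r)$ is $O(L_r^{-\beta})=o(r^{-\beta})$) and a union bound over the $N_r$ boxes, $\Q$–a.s.\ eventually $\sup_{x\in B(z_j,s_r)}W^\w_j(x)\le\tfrac{\varsigma}{8}q_1r^{-\beta}$ for all $j$. Declare box $j$ \emph{good} if in addition (i) $B(z_j,R_0)$ contains no Poisson point, and (ii) $\sum_{\w_i\in B(z_j,L_r)\setminus B(z_j,R_0)}\varphi(z_j-\w_i)\le(1+\tfrac{\varsigma}{8})q_1r^{-\beta}$. These events depend only on the points in $B(z_j,L_r)$, hence are independent across $j$; (i) costs $e^{-\rho w_dR_0^d}=e^{-o(r^d)}$, and conditionally on (i), (ii) is a lower–tail event for a Poisson integral over $B(z_j,L_r)\setminus B(z_j,R_0)$, whose probability — by the same Legendre computation, the two truncations changing only lower–order terms since $R_0=o(r)$ and $L_r\gg r$ — is at least $\exp(-(d-\delta')r^d)$ for some $\delta'=\delta'(\varsigma)>0$; the slack in $(1+\tfrac{\varsigma}{8})$ is exactly what produces $\delta'>0$. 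Hence
\[
\Q\big(\text{no good box in }\{|z|\le M_\kappa(r)\}\big)\le\big(1-e^{-(d-\delta')r^d}\big)^{N_r}\le\exp\big(-e^{\delta'r^d/2}\big),
\]
which is summable over $r\in\N$; by Borel--Cantelli, $\Q$–a.s.\ for all $r$ large there is a good box, with centre $z=z(r,\w)$, $|z|\le M_\kappa(r)$.

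On such a box, for $\w_i\in B(z,L_r)\setminus B(z,R_0)$ and $x\in B(z,s_r)$ one has $|x-\w_i|\ge(1-\varepsilon)|z-\w_i|$ with $|z-\w_i|\ge R_0\to\infty$, so $\varphi(y)\sim K|y|^{-d-\beta}$ gives $\sup_{x\in B(z,s_r)}\varphi(x-\w_i)\le(1+c\varepsilon)\varphi(z-\w_i)$ uniformly, and by (i) there is no nearer point; therefore
\[
\sup_{x\in B(z,s_r)}V^\w(x)\le(1+c\varepsilon)\!\!\sum_{\w_i\in B(z,L_r)\setminus B(z,R_0)}\!\!\!\varphi(z-\w_i)+\sup_{x\in B(z,s_r)}W^\w_j(x)\le(1+c\varepsilon)\big(1+\tfrac{\varsigma}{8}\big)q_1r^{-\beta}+\tfrac{\varsigma}{8}q_1r^{-\beta},
\]
which is $\le(1+\tfrac{\varsigma}{2})q_1r^{-\beta}$ once $\varepsilon$ is small. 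Finally, by the variational characterisation $\lambda_{V^\w,B(z,s_r)}\le\lambda_1(B(z,s_r))+\sup_{B(z,s_r)}V^\w$ (test with the Dirichlet ground state of $-L$ on $B(z,s_r)$), and by Lemma \ref{L:lem-l33} (valid under \eqref{e:a1-1}) $\lambda_1(B(z,s_r))\le(1+o(1))\,l^{-\alpha}\lambda_1^{(\alpha)}(B(0,1))\,r^{-\beta}$; taking $l$ large makes this $\le\tfrac{\varsigma}{2}q_1r^{-\beta}$, so $\lambda_{V^\w,B(z,lr^{\beta/\alpha})}\le(1+\varsigma)q_1r^{-\beta}$. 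Translating back via $r^d=\log t$ gives Proposition \ref{P:5.1}.

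The step I expect to be the main obstacle is (ii): getting the \emph{sharp} constant $d$ in the exponential cost of the good event. The naive choice — clearing outright a ball $B(z_j,c^\ast r)$ large enough that the leftover far field is below $q_1r^{-\beta}$ — is too expensive, because hard obstacle–clearing is not optimal in the heavy–tailed regime: its cost strictly exceeds $dr^d$ and the Borel--Cantelli budget fails. One must instead realise the conjugate tilted intensity $\rho\,e^{-\mu^\ast\varphi(\cdot)}$ by a change of measure to obtain $\Q(\text{good})\ge e^{-(d-\delta')r^d}$, \emph{and} control the supremum (not merely the pointwise value) of the potential over $B(z_j,s_r)$ in spite of the heavy tails of individual contributions; the annulus $B(z_j,R_0)\setminus B(z_j,s_r)$ and the separate uniform bound on the remainder $W^\w_j$ are inserted precisely so that the sup–to–value comparison costs only a factor $1+c\varepsilon$ and an additive $o(r^{-\beta})$, respectively. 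This is the technical heart of \cite[Section 4.1]{Fuk2}; here it is lighter, since only the first–order eigenvalue asymptotics is needed and no matching lower bound for $\lambda_{V^\w,B(z,\cdot)}$ is required inside this proposition.
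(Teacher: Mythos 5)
Your proof is essentially correct, and it takes a genuinely different route from the paper's. The paper works under the globally tilted measure $\tilde\Q_t$ (Poisson intensity $\rho\,e^{-\rho_0(\lambda(t))\varphi_0(z)}\,dz$ on all of $\R^d$), writes the good event as $E_1\cap E_2^c$ with $E_1$ a pointwise condition on $V^\w(0)$ and $E_2$ a fluctuation event $\sup_{x\in B_M(t)}|V^\w(x)-V^\w(0)-\Ee_{\tilde\Q_t}(V^\w(x)-V^\w(0))|\ge\tfrac{\varepsilon}{2}(\log t)^{-\beta/d}$, and then bounds $\tilde\Q_t(E_1)$ from below via the central limit theorem of \cite[Lemma 7(iii)]{Fuk2} and $\tilde\Q_t(E_2)$ from above by second-moment (Chebyshev) estimates, splitting the fluctuation integral into $B_{2M}(t)$ and its complement. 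You avoid all of this sup-fluctuation machinery by clearing a small ball $B(z_j,R_0)$ with $R_0=s_r/\varepsilon=o(r)$ (cost $e^{-o(r^d)}$, negligible) and then imposing only a pointwise lower-tail condition on the annulus; the deterministic geometric observation $|x-\w_i|\ge(1-\varepsilon)|z-\w_i|$ for $x\in B(z,s_r)$, $\w_i\notin B(z,R_0)$, together with $\varphi_0(\cdot)\sim K|\cdot|^{-d-\beta}$, then promotes the pointwise bound to a sup bound at cost $1+c\varepsilon$. Both arguments rest on the same Legendre computation identifying the exponential cost $dr^d$, but yours is more elementary in that it bypasses Fukushima's CLT — that CLT is needed in \cite{Fuk2} to reach second-order asymptotics, which the paper imports even though only first order is wanted, whereas your reduction keeps the proof self-contained at first order. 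The cost is the extra slack $(1+c\varepsilon)(1+\tfrac{\varsigma}{8})$, harmless here. Two small technical points to tidy: (a) for true independence of the good-box indicators you should space the centres by $\gtrsim 2L_r$, not merely place them in boxes of side $L_r$, since the events depend on $B(z_j,L_r)$ — this changes $N_r$ only by a constant factor, so $\log N_r=(1+o(1))dr^d$ is unaffected; (b) you should phrase the geometric comparison through the monotone envelope $\varphi_0(r)=\sup_{|x|\ge r}\varphi(x)$ (as the paper does) rather than $\varphi$ itself, since $\varphi$ is not assumed radial or monotone; $\varphi_0$ still satisfies $\lim_{r\to\infty}\varphi_0(r)r^{d+\beta}=K$, so the $(1+c\varepsilon)$ comparison goes through unchanged.
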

In the heavy tailed case, by the continuity of $\varphi$ and \eqref{:h}, for any $\theta>0$, there exists a constant $C(\theta)>0$ such that for all $x\in \R^d$,
$$\varphi(x)\le \varphi_0(x):=(K+\theta)(C(\theta)\wedge |x|^{-d-\beta}).$$ Thus, to consider  upper bounds for the first Dirichlet eigenvalue corresponding to the shape function $\varphi$, it suffices to study that associated with the function $\varphi_0$. For simplicity, in the proof below we just take
$$\varphi_0(x):=1\wedge |x|^{-d-\beta},\quad x\in \R^d,$$ since the argument goes through for $\varphi_0(x)=(K+\theta)(C(\theta)\wedge |x|^{-d-\beta})$ and then the desired assertion follows by letting $\theta$ small enough.

  \ \

Let $N>1/  d $, and $M>1$ large enough. Define
$\Lambda_N(t)=[-(\log t)^N,(\log t)^N]$ and $B_M(t)= B(0, M(\log t)^{\beta/(d \alpha)})$.
 First, we have
 \begin{lemma}\label{L:5.1} For any $\varepsilon>0$, there is a constant $c(\varepsilon)>0$ such that for all $t$ large enough,
 $$\Q\left(\sup_{y\in B_M(t)}\sup_{\w_i\notin \Lambda_N(t)} |y-\w_i|^{-d-\beta}>\varepsilon(\log t)^{-\beta/d}\right)\le \exp\left(-c(\varepsilon) (\log t)^{ d N+\beta (N-1/d)}\right).$$
 \end{lemma}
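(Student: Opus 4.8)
The plan is to bound the event in question by estimating, for a single far-away Poisson point $\w_i \notin \Lambda_N(t)$, how large the quantity $|y-\w_i|^{-d-\beta}$ can be over $y \in B_M(t)$, and then controlling the union over all such points via a first-moment/exponential-Chebyshev argument against the Poisson law. First I would observe that for $y \in B_M(t)$ and $\w_i \notin \Lambda_N(t)$ one has $|y-\w_i| \ge |\w_i| - M(\log t)^{\beta/(d\alpha)} \ge \tfrac12 |\w_i|$ once $t$ is large (since $\w_i \notin \Lambda_N(t)$ forces $|\w_i| \ge (\log t)^N$ with $N > 1/d > \beta/(d\alpha)$, so the ball radius is negligible). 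Hence $\sup_{y\in B_M(t)}|y-\w_i|^{-d-\beta} \le 2^{d+\beta}|\w_i|^{-d-\beta}$, and the supremum over all $\w_i \notin \Lambda_N(t)$ is at most $2^{d+\beta}$ times the largest value of $|z|^{-d-\beta}$ achieved by a Poisson point outside the cube, i.e.\ controlled by the number of Poisson points in the annulus $\{(\log t)^N \le |z| \le r\}$ for the relevant threshold radius $r$.

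The key computation: the event $\{\exists\, \w_i \notin \Lambda_N(t): 2^{d+\beta}|\w_i|^{-d-\beta} > \varepsilon(\log t)^{-\beta/d}\}$ is exactly the event that $\mu^\w$ puts at least one point in the region $\{z : |z| \le (2^{d+\beta}/\varepsilon)^{1/(d+\beta)}(\log t)^{\beta/(d(d+\beta))}\} \setminus \Lambda_N(t)$. But wait — for $t$ large this radius $(\log t)^{\beta/(d(d+\beta))}$ is far smaller than $(\log t)^N$, so this region is empty! That cannot be the intended reading; instead I should keep the supremum structure and bound $\Q$-probability by a union bound / exponential moment over the \emph{number} of points in a large-but-finite window where they could conceivably matter, or directly estimate $\Ee_\Q[\#\{\w_i \notin \Lambda_N(t): |\w_i| \le R\}]$ and use that a single point at distance $(\log t)^N$ already gives $|\w_i|^{-d-\beta} \le (\log t)^{-N(d+\beta)} \ll \varepsilon (\log t)^{-\beta/d}$ since $N(d+\beta) > (1/d)(d+\beta) > \beta/d$. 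So in fact the supremum over points outside $\Lambda_N(t)$ is \emph{deterministically} at most $2^{d+\beta}(\log t)^{-N(d+\beta)}$, which is eventually below $\varepsilon(\log t)^{-\beta/d}$, making the event empty for $t$ large. The stated probability bound $\exp(-c(\varepsilon)(\log t)^{dN + \beta(N-1/d)})$ is then vacuously true (the left side is $0$). However, since the paper presumably wants a genuine estimate matching a second-moment analysis, I would instead prove it honestly by: (a) writing $\Q(\cdot) \le \Q(\mu^\w \text{ has a point in } A_t)$ for the appropriate region $A_t$ only if $A_t \ne \emptyset$, and (b) when it matters, bounding $\Q(\text{point in } A_t) \le 1 - e^{-\rho|A_t|} \le \rho |A_t|$ and computing $|A_t|$, showing $|A_t| \le C (\log t)^{dN}\cdot(\text{small factor})$ — actually the clean route is the exponential-moment bound below.

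Concretely, the honest proof I would carry out: set $S_t := \sup_{y\in B_M(t)}\sup_{\w_i\notin \Lambda_N(t)}|y-\w_i|^{-d-\beta} \le 2^{d+\beta}\sum_{\w_i \notin \Lambda_N(t)} |\w_i|^{-d-\beta}$, then for $\lambda > 0$ use the Poisson exponential formula
\[
\Ee_\Q\!\left[\exp\!\Big(\lambda \sum_{\w_i \notin \Lambda_N(t)} |\w_i|^{-d-\beta}\Big)\right] = \exp\!\left(\rho \int_{\R^d \setminus \Lambda_N(t)} \big(e^{\lambda |z|^{-d-\beta}} - 1\big)\,dz\right).
\]
Since on $\R^d \setminus \Lambda_N(t)$ we have $|z|^{-d-\beta} \le (\log t)^{-N(d+\beta)} \to 0$, the bound $e^u - 1 \le 2u$ gives the integral $\le 2\lambda \int_{|z| \ge (\log t)^N} |z|^{-d-\beta}\,dz = c\,\lambda (\log t)^{-N\beta}$. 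Choosing $\lambda = (\log t)^{\beta/d + dN}$ and applying Markov's inequality to $\Q(S_t > \varepsilon(\log t)^{-\beta/d}) \le \Q(\sum |\w_i|^{-d-\beta} > 2^{-d-\beta}\varepsilon (\log t)^{-\beta/d})$ yields a bound of the form $\exp(c\lambda(\log t)^{-N\beta} - 2^{-d-\beta}\varepsilon\lambda(\log t)^{-\beta/d}) = \exp\big((\log t)^{dN+\beta/d}((\log t)^{-N\beta}c - 2^{-d-\beta}\varepsilon(\log t)^{-\beta/d})\big)$; since $-N\beta < -\beta/d$ (as $N > 1/d$), the second term dominates for $t$ large, giving $\le \exp(-c(\varepsilon)(\log t)^{dN + \beta/d - \beta/d})$... so I must track exponents carefully — the target exponent $dN + \beta(N - 1/d)$ suggests the right choice is $\lambda \asymp (\log t)^{dN + \beta N}$ rather than $(\log t)^{dN+\beta/d}$, making the gain $\lambda(\log t)^{-\beta/d} \asymp (\log t)^{dN + \beta N - \beta/d}$. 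The main obstacle, and the only real subtlety, is bookkeeping these competing powers of $\log t$ to land exactly on the exponent $dN + \beta(N - 1/d)$; the probabilistic input (Poisson exponential formula plus $e^u-1 \le 2u$ for small $u$) is entirely routine.
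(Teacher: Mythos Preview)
Your proposal is correct and follows essentially the same route as the paper: bound the quantity by $2^{d+\beta}\sum_{\w_i \notin \Lambda_N(t)}|\w_i|^{-d-\beta}$, apply the Poisson exponential-moment formula with $\lambda = (\log t)^{(d+\beta)N}$ (the choice you eventually settle on), use $e^u-1 \le eu$ for $u\le 1$ to get the moment bound $\exp(c_1(\log t)^{dN})$, and conclude via Markov's inequality, yielding exactly the exponent $dN+\beta(N-1/d)$. Your side observation that the \emph{literal} statement (with $\sup_{\w_i}$ in place of $\sum_{\w_i}$) is deterministically trivial for large $t$ is also correct --- the paper's own proof in fact works with the sum from the first line, and the lemma is only ever invoked (in the proof of Proposition~\ref{P:5.1}) to bound an event defined via a sum, so this is a typo in the lemma's statement.
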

 \begin{proof} For $t$ large enough, and for $\w_i\notin \Lambda_N(t)$ and $y\in B_M(t)$, by $N>1/d$ and $\beta\in (0,\alpha)$, we have
 $|\w_i-y|\ge |\w_i|/2$, and so
 $$\sup_{y\in B_M(t)}\sum_{\w_i\notin \Lambda_N(t)}|y-\w_i|^{-d-\beta}\le 2^{d+\beta} \sum_{\w_i\notin \Lambda_N(t)} |\w_i|^{-d-\beta}.$$¡¡Note that, since $\{\w_i\}$ are from a realization of a homogeneous Poisson point process on $\R^d$ with parameter $\rho$, for $t$ large enough,
 \begin{align*}&\Ee_\Q\exp\left\{(\log t)^{(d+\beta)N} \sum_{\w_i\notin \Lambda_N(t)}|\w_i|^{-d-\beta}\right\}\\
 &=\exp\left(\rho \int_{\R^d\backslash \Lambda_N(t)}\left(e^{(\log t)^{(d+\beta)N}|z|^{-(d+\beta)}}-1\right)\,dz\right)\\
 &\le \exp\left(\rho e\int_{\R^d\backslash \Lambda_N(t)} (\log t)^{(d+\beta)N}|z|^{-(d+\beta)}\,dz\right) \le \exp(c_1(\log t)^{dN}),
 \end{align*} where in the first inequality we used the fact that $e^x-1
\le e x$ for all $x\in (0,1]$.
 Therefore, by the Markov inequality, for $t$ large enough,
 \begin{align*}& \Q\left(\sup_{y\in B_M(t)}\sup_{\w_i\notin \Lambda_N(t)} |y-\w_i|^{-d-\beta}>\varepsilon(\log t)^{-\beta/d}\right)\\
 &\le  \Q\left(\sum_{\w_i\notin \Lambda_N(t)}|\w_i|^{-d-\beta}\ge 2^{-d-\beta}\varepsilon (\log t)^{-\beta/d}\right)\\
 &\le \exp\left( c_1(\log t)^{dN}-\varepsilon 2^{-d-\beta} (\log t)^{(d+\beta)N-\beta/d}\right)\\
 &\le \exp\left(-c_2 (\log t)^{ d N+\beta (N-1/d)}\right),\end{align*} where in the last inequality we used the fact that $N>1/d$. The proof is complete.
  \end{proof}

  For any $t>0$, define
  $$H(t)=\log \Ee_\Q[\exp(-t V^\w(0))],\quad \rho_0(t)=\left(\frac{(d+\beta)t}{d a_1}\right)^{-(d+\beta)/\beta},$$ where $$a_1=\rho w_d\Gamma\left(\frac{\beta}{d+\beta}\right).$$
In particular,
$$\rho_0(\lambda(t))=\left(\frac{a_1\beta}{d(d+\beta)}\right)^{-(d+\beta)/d} (\log t)^{ (d+\beta)/d}$$ and
\begin{equation}\label{e:llss}H(\rho_0(\lambda(t)))+\lambda(t)\rho_0(\lambda(t))=-d \log t+o(1),\end{equation} where in the latter equality we used the fact that
$$H(t)=-a_1t^{d/(d+\beta)}+O(e^{-t}),\quad t\to \infty;$$ see \cite[Lemma 1]{Fuk2}.
Next, we introduce a transformed measure defined by
$$ { \tilde \Q_t}(d\w) =\left[e^{-H(\rho_0(\lambda(t)))-\rho_0(\lambda(t))V^\w(0)}\right]{ \Q}(d \w),\quad t>0.$$ Then, it follows from \cite[Lemma 7(1)]{Fuk2} that $(\w, \tilde \Q_t)$ is a Poisson point process on $\R^d$ with intensity $\rho e^{-\rho_0(\lambda(t))\varphi_0(z)}\,dz.$ Furthermore, we have

\begin{lemma}\label{L:lem5.1} For $t$ large enough,
$$\sup_{x\in B_M(t)}|\Ee_{\tilde \Q_t} [V^\w(x)]-\lambda(t)|\le o((\log t)^{-\beta/d}).$$   \end{lemma}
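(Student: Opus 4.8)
The plan is to reduce everything to the mean (Campbell) formula for the Poisson point process $(\w,\tilde\Q_t)$, whose intensity measure is $\rho e^{-\rho_0(\lambda(t))\varphi_0(z)}\,dz$. Writing $s:=\rho_0(\lambda(t))$ and recalling that after the reduction preceding the lemma we may take $\varphi_0(x)=1\wedge|x|^{-d-\beta}$ and $V^\w(x)=\sum_i\varphi_0(x-\w_i)$, one obtains
\[
\Ee_{\tilde \Q_t}[V^\w(x)]=\rho\int_{\R^d}\varphi_0(x-z)\,e^{-s\varphi_0(z)}\,dz=:I(x).
\]
I would first evaluate $I(0)$ to within an exponentially small error, and then show $|I(x)-I(0)|=o((\log t)^{-\beta/d})$ uniformly in $x\in B_M(t)$.

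For $I(0)$, split the integral at $|z|=1$. On $\{|z|\le1\}$ the contribution is $\rho w_de^{-s}$, exponentially small since $s\asymp(\log t)^{(d+\beta)/d}\to\infty$. On $\{|z|>1\}$ substitute $u=|z|^{-d-\beta}$ and pass to polar coordinates to get $\frac{dw_d\rho}{d+\beta}\int_0^1 u^{-d/(d+\beta)}e^{-su}\,du$; replacing $\int_0^1$ by $\int_0^\infty$ (the error being $O(e^{-s}/s)$ because $u^{-d/(d+\beta)}\le1$ on $[1,\infty)$) and using $\int_0^\infty u^{-d/(d+\beta)}e^{-su}\,du=\Gamma(\beta/(d+\beta))s^{-\beta/(d+\beta)}$ yields $I(0)=\frac{d}{d+\beta}a_1 s^{-\beta/(d+\beta)}+O(e^{-s})$ with $a_1=\rho w_d\Gamma(\beta/(d+\beta))$. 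Since $s=\rho_0(\lambda(t))$ satisfies $s^{-\beta/(d+\beta)}=\frac{(d+\beta)\lambda(t)}{da_1}$ by the definition of $\rho_0$, this collapses to $I(0)=\lambda(t)+O(e^{-s})=\lambda(t)+o((\log t)^{-\beta/d})$.

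For uniformity in $x$, the key point is that in the heavy tailed case $\beta<\alpha$, so the radius $\ell(t):=M(\log t)^{\beta/(d\alpha)}$ of $B_M(t)$ is of lower order than the length scale $s^{1/(d+\beta)}\asymp(\log t)^{1/d}$ on which the weight $e^{-s\varphi_0(z)}$ effectively lives. I would write $I(x)-I(0)=\rho\int_{\R^d}(\varphi_0(x-z)-\varphi_0(z))e^{-s\varphi_0(z)}\,dz$ and split at $|z|=2\ell(t)$. On $\{|z|\le2\ell(t)\}$, bound $\varphi_0(x-z)+\varphi_0(z)\le2$ and $e^{-s\varphi_0(z)}\le e^{-s(2\ell(t))^{-d-\beta}}$; here $s(2\ell(t))^{-d-\beta}\asymp(\log t)^{(d+\beta)(\alpha-\beta)/(d\alpha)}\to\infty$ at a polynomial-in-$\log t$ rate, so this piece is at most $C(\log t)^{\beta/\alpha}\exp(-c(\log t)^{(d+\beta)(\alpha-\beta)/(d\alpha)})$, which is $o((\log t)^{-N})$ for every $N$. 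On $\{|z|>2\ell(t)\}$ one has $|x|\le|z|/2$ and $|x-z|,|z|>1$, so the elementary mean-value estimate $\big||x-z|^{-d-\beta}-|z|^{-d-\beta}\big|\le C\frac{|x|}{|z|}|z|^{-d-\beta}$ applies; integrating in polar coordinates with $u=s|z|^{-d-\beta}$ and using $\int_0^\infty u^{-(d-1)/(d+\beta)}e^{-u}\,du<\infty$ gives a bound $C|x|\,s^{-(1+\beta)/(d+\beta)}\le CM(\log t)^{\beta/(d\alpha)-(1+\beta)/d}$. Since $\beta<\alpha$ forces $\frac{\beta}{d\alpha}-\frac{1+\beta}{d}<-\frac{\beta}{d}$, this is $o((\log t)^{-\beta/d})$, and combining the three estimates completes the proof.

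The \emph{main obstacle} is precisely this uniformity in $x$: one must keep the perturbation caused by translating the first argument of $\varphi_0$ strictly below the target order $(\log t)^{-\beta/d}$, and this is where the hypothesis $\beta<\alpha$ enters twice — once to put $B_M(t)$ deep inside the region where $e^{-s\varphi_0(z)}\approx1$ on the relevant scale, and once to make the exponent $\frac{\beta}{d\alpha}-\frac{1+\beta}{d}$ lie below $-\frac{\beta}{d}$. It is also worth recording that all constants above depend only on $d$, $\beta$ and $M$ (not on $x$ or $t$), and that the reduction preceding the lemma lets us work throughout with $\varphi_0(x)=1\wedge|x|^{-d-\beta}$.
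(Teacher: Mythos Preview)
Your proposal is correct and follows essentially the same route as the paper: Campbell's formula to get $I(x)=\rho\int\varphi_0(x-z)e^{-s\varphi_0(z)}\,dz$, a direct identification of $I(0)$ with $\lambda(t)$ up to an exponentially small error, and then a splitting of $I(x)-I(0)$ at radius $2M(\log t)^{\beta/(d\alpha)}$ together with the mean value inequality on the outer region to obtain the bound $C(\log t)^{\beta/(d\alpha)-(1+\beta)/d}=o((\log t)^{-\beta/d})$. The only cosmetic difference is that you compute $I(0)$ by splitting at $|z|=1$ and invoking the Gamma integral, whereas the paper replaces $\varphi_0$ by $|z|^{-d-\beta}$ throughout (which makes the main term equal $\lambda(t)$ exactly); both yield the same conclusion with errors of the same order.
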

\begin{proof} For any $x\in B_M(t)$,
\begin{align*}\Ee_{\tilde \Q_t} [V^\w(x)]&=\rho \int_{\R^d} \varphi_0(x-z)e^{-\rho_0(\lambda(t))\varphi_0(z)}\,dz\\
&=\rho\int_{B_{2M}(t)}\varphi_0(x-z)e^{-\rho_0(\lambda(t))\varphi_0(z)}\,dz+\rho\int_{\R^d\setminus B_{2M}(t)}\varphi_0(x-z)e^{-\rho_0(\lambda(t))\varphi_0(z)}\,dz.\end{align*} It is easy to see that for $t$ large enough
\begin{equation}\label{e:note1}\begin{split} \rho\sup_{x\in B_M(t)}\int_{B_{2M}(t)}\varphi_0(x-z)e^{-\rho_0(\lambda(t))\varphi_0(z)}\,dz
&\le \rho\int_{B_{2M}(t)} e^{-\rho_0(\lambda(t))\varphi_0(z)}\,dz\\
&\le c_1 \exp(-c_2(\log t)^{(d+\beta)(\alpha-\beta)/(d\alpha)}),\end{split}\end{equation} where $c_1,c_2>0$ are independent of $t$ (but depending on $M$). Thus, for $x\in B_M(t)$ and for $t$ large enough,
$$\Ee_{\tilde \Q_t} [V^\w(x)]\le \rho\int_{\R^d\setminus B_{2M}(t)}|x-z|^{-d-\beta}e^{-\rho_0(\lambda(t))|z|^{-d-\beta}}\,dz+¡¡c_1 \exp(-c_2(\log t)^{(d+\beta)(\alpha-\beta)/(d\alpha)}).$$

On the other hand, we can check that
$$\rho\int_{B_{2M}(t)}|z|^{-d-\beta}e^{-\rho_0(\lambda(t))|z|^{-d-\beta}}\,dz\le c_3 \exp(-c_4(\log t)^{(d+\beta)(\alpha-\beta)/(d\alpha)}).$$ Then, for $t>0$ large enough,
\begin{align*}\Ee_{\tilde \Q_t} [V^\w(0)]=& \rho\int_{\R^d}|z|^{-d-\beta}e^{-\rho_0(\lambda(t))|z|^{-d-\beta}}\,dz +c_5 \exp(-c_6(\log t)^{(d+\beta)(\alpha-\beta)/(d\alpha)})\\
=& \lambda (t)+O\left(\exp(-c(\log t)^{(d+\beta)(\alpha-\beta)/(d\alpha)})\right)\end{align*} for some constant $c>0$.

Next, for any $x\in B_M(t)$ and $t$ large enough, by the fact that $\varphi_0(z)=1\wedge |z|^{-(d+\beta)}$ and the mean value theorem,
\begin{align*}|\Ee_{\tilde \Q_t} (V^\w(x)-V^\w(0))|&\le \rho\int_{\R^d\setminus B_{2M}(t)}|\varphi_0(x-z)-\varphi_0(z)|e^{-\rho_0(\lambda(t))\varphi_0(z)}\,dz\\
&\quad+O\left(\exp(-c(\log t)^{(d+\beta)(\alpha-\beta)/(d\alpha)})\right)\\
&\le c_7(\log t)^{\beta/(d \alpha)} \int_{\R^d\setminus B_{2M}(t)}|z|^{-d-\beta-1} e^{-c_8(\log t)^{(d+\beta)/d}|z|^{-d-\beta}}\,dz\\
&\quad+O\left(\exp(-c(\log t)^{(d+\beta)(\alpha-\beta)/(d\alpha)})\right)\\
&\le c_9(\log t)^{-\beta/d-(1-\beta/\alpha)/d},  \end{align*} thanks to $\beta\in (0,\alpha)$ again. This proves the desired assertion.  \end{proof}

Now, we are back to the probability estimate for $V^\w(x)$ under the probability measure $\Q$.
\begin{lemma}\label{L5.2}There is a constant $\delta\in (0,1/2)$ such that for any $\varepsilon, M>0$ there exists $t_{\delta,\varepsilon, M}>0$ such that for all $t\ge t_{\delta,\varepsilon, M}$,
$$\Q\left(\sup_{x\in B_M(t)}|V^\w(x)-\lambda (t)|\le \varepsilon (\log t)^{-\beta/d}\right)\ge c(\delta, \varepsilon, M) t^{-d}\exp((\log t)^{\delta}).$$ \end{lemma}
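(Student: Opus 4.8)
The plan is to run the Pastur‑type change of measure from $\Q$ to the tilted law $\tilde \Q_t$ with Radon--Nikodym density $\frac{d\Q}{d\tilde \Q_t}(\w)=e^{H(\rho_0)+\rho_0V^\w(0)}$, $\rho_0:=\rho_0(\lambda(t))$, and to gain an extra factor $\exp(c(\log t)^{1/2})$ over the $t^{-d}$ cost recorded in \eqref{e:llss} by forcing $V^\w(0)$ to sit a full standard deviation above its $\tilde \Q_t$‑mean; this will make any $\delta\in(0,1/2)$ admissible. Write $A$ for the event in the statement (so, since $0\in B_M(t)$, on $A$ one has $|V^\w(0)-\lambda(t)|\le\varepsilon(\log t)^{-\beta/d}$), put $\sigma_t:=\big(\operatorname{Var}_{\tilde \Q_t}V^\w(0)\big)^{1/2}$ and $E_1:=\{V^\w(0)\ge\lambda(t)+\sigma_t\}$, and use $e^{\rho_0V^\w(0)}\ge e^{\rho_0(\lambda(t)+\sigma_t)}$ on $E_1$ to obtain
\[
\Q(A)=\Ee_{\tilde \Q_t}\big[e^{H(\rho_0)+\rho_0V^\w(0)}\,\I_A\big]\ \ge\ e^{H(\rho_0)+\lambda(t)\rho_0}\,e^{\rho_0\sigma_t}\,\tilde \Q_t(A\cap E_1),
\]
and by \eqref{e:llss} the first factor equals $t^{-d}e^{o(1)}$.

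Next I would compute $\sigma_t$. The substitution $v=|z|^{-(d+\beta)}$ together with $\rho_0\to\infty$ gives $\operatorname{Var}_{\tilde \Q_t}V^\w(x)=\rho\!\int\varphi_0(x-z)^2e^{-\rho_0\varphi_0(z)}\,dz\asymp\rho_0^{-(d+2\beta)/(d+\beta)}\asymp(\log t)^{-1-2\beta/d}$, uniformly in $x\in B_M(t)$ (the integrand concentrates on $|z|\asymp(\log t)^{1/d}\gg|x|$, where $\varphi_0(x-z)=\varphi_0(z)(1+o(1))$). Hence $\sigma_t\asymp(\log t)^{-1/2-\beta/d}$, and since $\rho_0\asymp(\log t)^{(d+\beta)/d}$ this yields $\rho_0\sigma_t\asymp(\log t)^{1/2}$ and $\sigma_t\le\varepsilon(\log t)^{-\beta/d}$ for large $t$ (so $A\cap E_1\ne\emptyset$). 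Consequently $e^{\rho_0\sigma_t}\ge\exp((\log t)^\delta)$ for every fixed $\delta<1/2$ and all large $t$ --- this is exactly where the range $\delta\in(0,1/2)$ enters.

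The substantive step is $\tilde \Q_t(A\cap E_1)\ge c>0$ for large $t$, which I would extract from $\tilde \Q_t(A\cap E_1)\ge\tilde \Q_t(E_1)-\tilde \Q_t(A^c)$. For $E_1$: $V^\w(0)=\sum_i\varphi_0(\w_i)$ is a Poisson integral of the bounded kernel $\varphi_0$; its $\tilde \Q_t$‑mean is $\lambda(t)+O\big(e^{-c(\log t)^{(d+\beta)(\alpha-\beta)/(d\alpha)}}\big)=\lambda(t)+o(\sigma_t)$ (this sharper‑than‑stated form is in the proof of Lemma~\ref{L:lem5.1}), and the Lyapunov ratio $\sigma_t^{-3}\,\rho\!\int\varphi_0^3e^{-\rho_0\varphi_0}\,dz\asymp\rho_0^{-d/(2(d+\beta))}\to0$, so the Lindeberg--Lyapunov theorem gives $\big(V^\w(0)-\Ee_{\tilde \Q_t}V^\w(0)\big)/\sigma_t\Rightarrow$ a standard Gaussian and $\tilde \Q_t(E_1)\to\int_1^\infty(2\pi)^{-1/2}e^{-s^2/2}\,ds>0$. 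For $A^c$: by Lemma~\ref{L:lem5.1} it suffices, for large $t$, to bound $\tilde \Q_t\big(\sup_{x\in B_M(t)}|V^\w(x)-\Ee_{\tilde \Q_t}V^\w(x)|>\tfrac{\varepsilon}{2}(\log t)^{-\beta/d}\big)$. With $r_t:=\big(\rho_0/(2\log\log t)\big)^{1/(d+\beta)}\asymp(\log t)^{1/d}(\log\log t)^{-1/(d+\beta)}$ one has $\rho_0\varphi_0(z)\ge2\log\log t$ on $B(0,r_t)$, so the expected number of $\tilde \Q_t$‑points there is $\le\rho w_d r_t^d(\log t)^{-2}\to0$; hence with $\tilde \Q_t$‑probability $\to1$ there is no point in $B(0,r_t)$, which contains $B_{2M}(t)$ for large $t$ because $\beta<\alpha$ gives $r_t\gg M(\log t)^{\beta/(d\alpha)}$. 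On that event $V^\w|_{B_M(t)}$ comes only from points at distance $\ge r_t$, so $|V^\w(x)-V^\w(y)|\lesssim|x-y|\sum_{|\w_i|\ge r_t}|\w_i|^{-d-\beta-1}$, a sum of $\tilde \Q_t$‑mean $\lesssim r_t^{-\beta-1}$ (deviations handled by Markov), whence the oscillation of $V^\w$ over $B_M(t)$ is $\lesssim(\log t)^{\beta/(d\alpha)}r_t^{-\beta-1}=o((\log t)^{-\beta/d})$, again by $\beta<\alpha$; and covering $B_M(t)$ by $O((\log t)^{\beta/\alpha})$ unit balls and applying Chebyshev with the variance bound above gives $\lesssim(\log t)^{\beta/\alpha-1}\to0$, once more by $\beta<\alpha$. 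Thus $\tilde \Q_t(A^c)\to0$, so $\tilde \Q_t(A\cap E_1)\ge c>0$ for large $t$, and inserting this into the first display yields $\Q(A)\ge c(\delta,\varepsilon,M)\,t^{-d}\exp((\log t)^\delta)$ for all $t\ge t_{\delta,\varepsilon,M}$.

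The main obstacle is this last step, and inside it the uniform control of $V^\w$ over the whole ball $B_M(t)$: one must simultaneously rule out a $\tilde \Q_t$‑point in a ball of radius $\asymp(\log t)^{1/d}$ about the origin and show the far‑field oscillation of $V^\w$ over $B_M(t)$ is negligible at scale $(\log t)^{-\beta/d}$, and all three places where this is carried out (the radius $r_t$, the oscillation exponent, the cardinality of the covering net) use $\beta<\alpha$. The other delicate point is the lower bound $\tilde \Q_t(E_1)\ge c>0$: a second‑moment estimate does not control the right tail of $V^\w(0)$, so one genuinely needs the Lindeberg--Lyapunov normal approximation for the Poisson integral $V^\w(0)$, together with the exponentially sharp form of the mean estimate from the proof of Lemma~\ref{L:lem5.1}.
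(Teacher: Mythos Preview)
Your proposal is correct and follows essentially the same route as the paper: the Pastur tilt $\Q\to\tilde\Q_t$, the identity \eqref{e:llss} giving the $t^{-d}$ factor, the extra $\exp\big(c(\log t)^{1/2}\big)$ gain from forcing $V^\w(0)$ about one standard deviation above $\lambda(t)$, the Gaussian limit of $(V^\w(0)-\lambda(t))/\sigma_t$ under $\tilde\Q_t$, and a uniform control of $V^\w(x)-V^\w(0)$ over $B_M(t)$.

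The differences are only in parametrization and in the execution of the last step. The paper introduces a free parameter $\gamma\in(1/2,1)$ and the window event $E_1=\{V^\w(0)-\lambda(t)\in[(\log t)^{-\beta/d-\gamma},\tfrac{\varepsilon}{4}(\log t)^{-\beta/d}]\}$ (so that $E_1\setminus E_2\subset A$ directly), obtaining the gain $\exp(c(\log t)^{1-\gamma})$; you instead work at the natural scale $\sigma_t$ and pick up $\exp(c(\log t)^{1/2})$. For the uniform control, the paper defines $E_2$ via the centered difference $V^\w(x)-V^\w(0)-\Ee_{\tilde\Q_t}(V^\w(x)-V^\w(0))$ and splits it into a near-field piece on $B_{2M}(t)$ (exponentially small via the intensity $e^{-\rho_0\varphi_0}$) and a far-field piece handled by the mean-value theorem and Chebyshev; you reach the same conclusion by first emptying a ball $B(0,r_t)$ of radius $\asymp(\log t)^{1/d}$ (which plays the role of the near-field estimate), then bounding the far-field gradient sum and using a net plus Chebyshev. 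Both decompositions exploit $\beta<\alpha$ at the same places. Note that once your oscillation bound over the whole of $B_M(t)$ is established, Chebyshev at a single point already suffices, so your covering by $O((\log t)^{\beta/\alpha})$ unit balls is redundant but harmless.
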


\begin{proof} For any given $\varepsilon>0$, by Lemma \ref{L:lem5.1}, for $t$ large enough,
 $$\sup_{x\in B_M(t)}|\Ee_{\tilde \Q_t}(V^\w(x)-V^\w(0))|\le \frac{\varepsilon}{4}(\log t)^{-\beta/d}.$$ For any $\gamma\in (1/2,1)$, we further define $$E_1=\left\{ V^\w(0)-\lambda (t) \in \left[(\log t)^{-\beta/d-\gamma},\frac{\varepsilon}{4}(\log t)^{-\beta/d}\right]\right\}$$ and
$$E_2=\left\{\sup_{x\in B_M(t)}|V^\w(x)-V^\w(0)-\Ee_{\tilde \Q_t}(V^\w(x)-V^\w(0))|\ge \frac{\varepsilon}{2}(\log t)^{-\beta/d} \right\}.$$ Then, for $t$ large enough,
$$E_1 E_2^c\subset \left\{\sup_{x\in B_M(t)}|V^\w(x)-\lambda (t)|\le \varepsilon (\log t)^{-\beta/d}\right\}.$$ Hence,
\begin{equation}\label{jjss}\begin{split}&\Q\left(\sup_{x\in B_M(t)}|V^\w(x)-\lambda (t)|\le \varepsilon (\log t)^{-\beta/d}\right)\\
&\ge e^{H(\rho_0(\lambda(t)))}\Ee_{\tilde\Q_t} (e^{\rho_0(\lambda(t))V^\w(0)}\I_{E_1\setminus E_2})\\
&\ge \exp\left(H(\rho_0(\lambda(t)))+\rho_0(\lambda(t))(\lambda(t)+(\log t)^{-\beta/d-\gamma})\right) \tilde \Q_t(E_1\setminus E_2)\\
&\ge \exp\left(-d \log t+\rho_0(\lambda(t))(\log t)^{-\beta/d-\gamma}+o(1)\right) (\tilde \Q_t(E_1)-\tilde \Q_t(E_2))\\
&\ge c_1 t^{-d} \exp(c_2(\log t)^{1-\gamma})(\tilde \Q_t(E_1)-\tilde \Q_t(E_2)),\end{split}\end{equation} where in the third inequality we used \eqref{e:llss}.

As shown in \cite[Lemma 7(iii)]{Fuk2}, $$(\log t)^{(d+2\beta)/(2d)}(V^\w(0)-\lambda (t))$$ under $\tilde \Q_t$ converges in law to a non-degenerate Gaussian random variable as $t\to\infty$. Then,
$$\tilde \Q_t(E_1)=\tilde \Q_t\left((\log t)^{(d+2\beta)/(2d)}(V^\w(0)-\lambda (t))\in  \left[(\log t)^{1/2-\gamma}, \frac{\varepsilon}{4}(\log t)^{1/2}\right]\right)$$ is bounded from below by a positive constant for $t$ large enough, thanks to $\gamma\in (1/2,1)$.

On the other hand, defining
$$\bar\mu^\w_t(dz):=\mu^\w(dz)-\rho e^{-\rho_0(\lambda(t))\varphi_0(z)}\,dz,$$ we write
\begin{align*}&V^\w(x)-V^\w(0)-\Ee_{\tilde \Q_t} (V^\w(x)-V^\w(0))\\
&= \int_{\R^d} \left(\varphi_0(x-z)-\varphi_0(-z)\right)\bar \mu^\w_t(dz)\\
&=\int_{B_{2M(t)}}\left(\varphi_0(x-z)-\varphi_0(-z)\right)\bar \mu^\w_t(dz)+ \int_{\R^d\backslash B_{2M(t)}}\left(\varphi_0(x-z)-\varphi_0(-z)\right)\bar \mu^\w_t(dz).  \end{align*}
Note that, by the fact that $\varphi_0(x)=1\wedge |x|^{-d-\beta}$,
\begin{align*}&\sup_{x\in B_M(t)}\left|\int_{B_{2M(t)}}\left(\varphi_0(x-z)-\varphi_0(-z)\right)\bar \mu^\w_t(dz)\right|\\
&\le \sup_{x\in B_M(t)}\int_{B_{2M(t)}}|\varphi_0(x-z)-\varphi_0(-z)|\,\mu^\w(dz)\\
&\quad +\rho\sup_{x\in B_M(t)}\int_{B_{2M(t)}}|\varphi_0(x-z)-\varphi_0(-z)|e^{-\rho_0(\lambda(t))\varphi_0(z)}\,dz\\
&\le \int_{B_{2M(t)}}\bar \mu^\w_t(dz)+2\rho\int_{B_{2M(t)}} e^{-\rho_0(\lambda(t))\varphi_0(z)}\,dz.  \end{align*}
Hence, according to the second inequality in \eqref{e:note1}, for $t$ large enough,
\begin{align*}&\tilde \Q_t\left(\sup_{x\in B_M(t)}\left|\int_{B_{2M(t)}}\left(\varphi_0(x-z)-\varphi_0(-z)\right)\bar \mu^\w_t(dz)\right|\ge \frac{\varepsilon}{4}(\log t)^{-\beta/d}\right)\\
&\le \tilde \Q_t\left(\int_{B_{2M(t)}}\bar \mu^\w_t(dz)\ge  \frac{\varepsilon}{8}(\log t)^{-\beta/d}\right)\le  \left[\frac{\varepsilon}{8}(\log t)^{-\beta/d}\right]^{-2}\Ee_{\tilde \Q_t} \left[\int_{B_{2M(t)}}\bar \mu^\w_t(dz)\right]^2\\
&=\left[\frac{\varepsilon}{8}(\log t)^{-\beta/d}\right]^{-2}\,\,\rho\int_{B_{2M(t)}} e^{-\rho_0(\lambda(t))\varphi_0(z)}\,dz\\
&\le c_3 \exp(-c_4(\log t)^{(d+\beta)(\alpha-\beta)/(d\alpha)}), \end{align*} where in the second inequality we used the Markov inequality and the equality above follows from the fact that the $\tilde \Q_t$-mean of $\int_{B_{2M(t)}}\bar \mu^\w_t(dz)$ is zero.

Furthermore, according to the mean value theorem, for $t$ large enough,
\begin{align*}&\sup_{x\in B_M(t)}\left|\int_{\R^d\backslash B_{2M(t)}}\left(\varphi_0(x-z)-\varphi_0(-z)\right)\bar \mu^\w_t(dz)\right|\\
&=\sup_{x\in B_M(t)}\left|\int_{\R^d\backslash B_{2M(t)}}\int_0^1   \frac{d}{d\theta}\varphi_0(\theta x-z) \,d\theta \,\bar \mu^\w_t(dz)\right|\\
&\le \int_{\R^d\backslash B_{2M(t)}}\sup_{x\in B_M(t),\theta\in (0,1)} \left|\frac{d}{d\theta}\varphi_0(\theta x-z)\right|\, \mu^\w_t(dz)\\
&\quad+ \rho \int_{\R^d\backslash B_{2M(t)}}\sup_{x\in B_M(t),\theta\in (0,1)} \left|\frac{d}{d\theta}\varphi_0(\theta x-z)\right| e^{-\rho_0(\lambda(t))\varphi_0(z)}\,dz \\
&= \int_{\R^d\backslash B_{2M(t)}}\sup_{x\in B_M(t),\theta\in (0,1)} \left|\frac{d}{d\theta}\varphi_0(\theta x-z)\right|\,\bar  \mu^\w_t(dz)\\
&\quad+2\rho \int_{\R^d\backslash B_{2M(t)}}\sup_{x\in B_M(t),\theta\in (0,1)} \left|\frac{d}{d\theta}\varphi_0(\theta x-z)\right| e^{-\rho_0(\lambda(t))\varphi_0(z)}\,dz \\
&\le c_5(\log t)^{\beta/(d \alpha)} \int_{\R^d\backslash B_{2M(t)}}|z|^{-d-\beta-1}\,\bar \mu^\w_t(dz)\\
&\quad + c_5(\log t)^{\beta/(d \alpha)} \int_{\R^d\backslash B_{2M(t)}}|z|^{-d-\beta-1} e^{-\rho_0(\lambda(t))\varphi_0(z)}\,dz. \end{align*} Note that
$$(\log t)^{\beta/(d \alpha)}\int_{\R^d\backslash B_{2M(t)}}|z|^{-d-\beta-1} e^{-\rho_0(\lambda(t))\varphi_0(z)}\,dz\le c_6(\log t)^{-\beta/d-(1-\beta/\alpha)/d}$$ for $t$ large enough, and that the $\tilde \Q_t$-mean of
$\int_{\R^d\backslash B_{2M(t)}}|z|^{-d-\beta-1}\,\bar \mu^\w_t(dz)$ is zero and the variance of it is bounded above by $c_7(\log t)^{-(d+2\beta+2)/d}$.  Hence, for $t$ large enough, by the Markov inequality,
\begin{align*}&\tilde \Q_t\left(\sup_{x\in B_M(t)}\left|\int_{\R^d\backslash B_{2M(t)}}\left(\varphi_0(x-z)-\varphi_0(-z)\right)\bar \mu^\w_t(dz)\right|\ge \frac{\varepsilon}{4}(\log t)^{-\beta/d}\right)\\
&\le \tilde \Q_t\left( c_5(\log t)^{\beta/(d \alpha)}\int_{\R^d\backslash B_{2M(t)}}|z|^{-d-\beta-1}\,\bar \mu^\w_t(dz)\ge  \frac{\varepsilon}{8}(\log t)^{-\beta/d}\right)\\
&\le c_8 (\log t)^{2(\beta+\beta/\alpha)/d}\,\,\Ee_{\tilde\Q_t}\left[\int_{\R^d\backslash B_{2M(t)}}|z|^{-d-\beta-1}\,\bar \mu^\w_t(dz)\right]^2\\
&\le  c_9 (\log t)^{-1-2(1-\beta/\alpha)/d}. \end{align*}

Combining all the estimates above, we arrive at that $ \tilde \Q_t(E_2)$ tends to zero when $t\to \infty$, and so $\tilde \Q_t(E_1)-\tilde \Q_t(E_2)$ is bounded below by a positive constant for $t$ large enough.
This along with \eqref{jjss} yields the desired assertion.
 \end{proof}

Now, we can present the

\begin{proof}[Proof of Proposition $\ref{P:5.1}$] Fix $\kappa>1$, and set
$I_t:=( (2(\log t)^N)\Z^d)\cap \{z\in \R^d: |z|\le t (\log t)^{-\kappa}\}$ for any $t>0$. For any $z\in I_t$ and $\varepsilon>0$, define
\begin{align*}F_r(z)=&\left\{\sup_{x\in z+B_M(t)}|\tilde V^\w(x)-\lambda(t)|\ge \frac{\varepsilon}{2}(\log t)^{-\beta/d}\right\},\\
G_r(z)=&\left\{ \sup_{x\in z+B_M(t)}\sum_{\w_i\notin z+\Lambda_N(t)}|z-\w_i|^{-d-\beta}\ge \frac{\varepsilon}{4} (\log t)^{-\beta/d}\right\},\end{align*} where
$$\tilde V^\w(x)=\sum_{\w_i\in z+\Lambda_N(t)}|x-\w_i|^{-d-\beta}.$$  We will estimate
 $\Q(\cap_{ z\in I_t}(F_t(z)\cup G_t(z))).$

 Note that $\{G_t(z)\}_{z\in I_t}$ have the same distribution such that for any $z\in I_t$ and $t$ large enough
 $$\Q(G_t(z))=\Q(G_t(0))\le \left(-c_1(\varepsilon) (\log t)^{ d N+\beta (N-1/d)}\right),$$ thanks to Lemma \ref{L:5.1}. On the other hand,  $\{F_t(z)\}_{z\in I_t}$ are i.i.d., and, according to Lemmas \ref{L5.2} and \ref{L:5.1}, for any $z\in I_t$ and $t$ large enough,
 \begin{align*}\Q(F_t(z))&=\Q(F_t(0))\le \Q(F_t(0)\setminus G_t(0))+\Q(G_t(0))\\
 &=\Q\left(\sup_{x\in B_M(t)}|V^\w(x)-\lambda (t)|\ge \frac{\varepsilon}{4} (\log t)^{-\beta/d}\right)+\Q(G_t(0))\\
 &\le 1-c_2(\varepsilon)t^{-d}\exp((\log t)^{\delta}) + \exp\left(-c_1(\varepsilon) (\log t)^{ d N+\beta (N-1/d)}\right)\\
 &\le 1-c_3(\varepsilon)t^{-d}\exp((\log t)^{\delta}). \end{align*}
 Hence,
  \begin{align*}\Q(\cap_{ z\in I_t}(F_t(z)\cup G_t(z))) &\le \Q(\cap_{ z\in I_t}F_t(z))+\Q(\cup_{ z\in I_t}  G_t(z) )\\
  &\le \left[1-c_3(\varepsilon)t^{-d}\exp((\log t)^{\delta})\right]^{c_4t^d(\log t)^{-(\kappa+N)d}}\\
  &\quad + \exp\left(-c_5(\varepsilon) (\log t)^{ d N+\beta (N-1/d)}\right)\\
 &\le \exp\left(-c_6(\varepsilon)\exp((\log t)^{\delta})(\log t)^{-(\kappa+N)d}\right)\\
 &\quad+\exp\left(-c_5(\varepsilon) (\log t)^{ d N+\beta (N-1/d)}\right)\\
 &\le \exp\left(-c_7(\varepsilon) (\log t)^{ d N+\beta (N-1/d)}\right),\end{align*} where in the third inequality we used the fact that $1-x\le e^{-x}$ for all $x>0$.
 The Borel-Cantelli lemma yields that $\Q$-almost surely for all $t$ large enough there exists $z:=z(t,\w)\in I_t$ for which both $F_t(z)$ and $G_t(z)$ fail to happen.

Below, we will fix this $z\in I_t$ for all $t$ large enough. Then, it holds that
\begin{align*} \lambda_{V^\w, B(z,B_M(t))}&\le \lambda_1(B(z,B_M(t)))+\sup_{x\in B(z,B_M(t))}V^\w(x)\\
&\le  \lambda_1(B(0,B_M(t)))+\sup_{x\in B(z,B_M(t))}\tilde V^\w(x)+\sup_{x\in B(z,B_M(t))}\sum_{\w_i\notin z+\Lambda_N(t)}|z-\w_i|^{-d-\beta}\\
&\le 2M^{-\alpha}(\log t)^{-\beta/d}\lambda_1^{(\alpha)}(B(0,1)) + \lambda(t) +\frac{3\varepsilon}{4}(\log t)^{-\beta/d}, \end{align*} where in the last inequality we used Lemma \ref{L:lem-l33}.
Letting $\varepsilon$ small enough and $M$ large enough in the inequality above, we then prove the desired assertion.
 \end{proof}

\subsection{Quenched estimates of $u^{\omega}(t,0)$: critical case} In this part, we will briefly show that the arguments of Theorems \ref{T:upper} and \ref{T-low} with some modifications still work for the following
\begin{itemize}\item {\it critical case}\,\,{\rm (C)}:\,\, The characteristic exponent $\psi(\xi)$ of the pure-jump symmetric L\'evy process $Z$ fulfills that $\psi(\xi)=O(|\xi|^\alpha)$ as $|\xi|\to0$, and the shape function $\varphi$ in the random potential $V^\w(x)$  satisfies
\begin{equation}\label{cre}0<\liminf_{|x|\to\infty} \varphi(x) |x|^{d+\alpha}\le \limsup_{|x|\to\infty} \varphi(x) |x|^{d+\alpha}<\infty.\end{equation}\end{itemize}

\ \

In the critical case,
it was shown in \cite[Theorem 6.4]{Ok1} that the integrated density  $N (\lambda)$ of states of the random Schr\"{o}dinger operator $H$ defined by \eqref{e:density} satisfies that
$$-\infty<\liminf_{\lambda \to 0} \lambda^{d/\alpha} \log N (\lambda)\le \limsup_{\lambda \to 0} \lambda^{d/\alpha} \log N (\lambda) <0.$$
Then, according to the arguments in Subsection \ref{section3.1} and the proof of Theorem \ref{T:upper}, we have
\begin{theorem}\label{T:upper-c} In the critical case {\rm(C)}, assume that \eqref{e:upp} holds. Then, there is a constant $\kappa_0>0$ such that for any $\varepsilon>0$, $\Q$-almost surely there is $R_\varepsilon(\w)\ge1$ so that for any $R\ge \max\{R_\varepsilon(\w),\phi(t)\}$ and $t\ge 1$,
$$u^{\w}(t,0)\le  \Phi(t,R)+ C(\varepsilon) R^{d/2} \exp\left(-t(1-2\varepsilon)\left(\frac{ k_0}{d\log R}\right)^{\alpha/d}\right),$$ where $C(\varepsilon)$ is independent of $R$ and $t$. \end{theorem}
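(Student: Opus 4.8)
The plan is to run the proof of Theorem~\ref{T:upper}(i) essentially line by line: the only point that genuinely needs re-inspection is whether the quenched lower bound \eqref{e:lim-u} for $\lambda_{V^\w,B(0,R)}$ survives in the critical regime, now with $\beta\wedge\alpha$ replaced by $\alpha$ and with $k_0$ merely a positive constant rather than the explicit value in \eqref{e:con-}. By \cite[Theorem~6.4]{Ok1}, condition \eqref{cre} gives $0<-\limsup_{\lambda\to 0}\lambda^{d/\alpha}\log N(\lambda)<\infty$; I would fix any $k_0$ with $0<k_0<-\limsup_{\lambda\to 0}\lambda^{d/\alpha}\log N(\lambda)$, so that there is $\lambda_0>0$ with $N(\lambda)\le\exp(-k_0\lambda^{-d/\alpha})$ for all $\lambda\in(0,\lambda_0)$. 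The key observation is that the derivation of \eqref{e:lim-u} recalled in Subsection~\ref{section3.1} (i.e.\ the arguments (2.3)--(2.6) of \cite{Fu}) uses nothing about $N(\lambda)$ beyond such a one-sided upper bound — the matching lower bound and the precise value of $k_0$ play no role — so the critical-case hypothesis is all that is required.

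Concretely, recall that $N(\lambda)$ equals $\sup_{R>0}(2R)^{-d}\Ee_\Q[\sharp\{k:\lambda_k^{V^\w,B(0,R)}\le\lambda\}]$, so Markov's inequality gives, for every $R\ge1$ and $\lambda>0$,
\[
\Q\big(\lambda_{V^\w,B(0,R)}<\lambda\big)\le\Ee_\Q\big[\sharp\{k:\lambda_k^{V^\w,B(0,R)}\le\lambda\}\big]\le (2R)^d\,N(\lambda).
\]
I would then take $\lambda=\lambda(R):=(1-\varepsilon)\big(k_0/(d\log R)\big)^{\alpha/d}$, which lies in $(0,\lambda_0)$ once $R$ is large; this yields $N(\lambda(R))\le\exp(-k_0\lambda(R)^{-d/\alpha})=R^{-d/(1-\varepsilon)}$, hence $\Q(\lambda_{V^\w,B(0,R)}<\lambda(R))\le 2^dR^{-d\varepsilon/(1-\varepsilon)}$. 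Along the subsequence $R_n=e^n$ this is summable, so Borel--Cantelli, together with the fact that $R\mapsto\lambda_{V^\w,B(0,R)}$ is non-increasing while $R\mapsto(\log R)^{-1}$ is non-increasing as well (after an arbitrarily small worsening of the constant when interpolating between consecutive $R_n$), gives: for every $\varepsilon>0$, $\Q$-almost surely there is $R_\varepsilon(\w)\ge1$ with $\lambda_{V^\w,B(0,R)}\ge(1-\varepsilon)\big(k_0/(d\log R)\big)^{\alpha/d}$ for all $R\ge R_\varepsilon(\w)$.

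Finally I would insert this into Proposition~\ref{L:lim-u} with $D=B(0,R)$, keeping only the first term inside the minimum and fixing $\delta=\delta(\varepsilon)>0$ small. Using $|B(0,R)|^{1/2}=w_d^{1/2}R^{d/2}$, the hypothesis \eqref{e:upp} (which bounds $\Pp_0(\tau_{B(0,R)}\le t)$ by $\Phi(t,R)$ as soon as $R\ge\phi(t)$ and $t\ge1$), and the elementary inequality $(t-\delta/2)(1-\varepsilon)\ge t(1-2\varepsilon)$, valid for all $t\ge1$ once $\delta(\varepsilon)\le 2\varepsilon/(1-\varepsilon)$, one obtains
\[
u^\w(t,0)\le\Phi(t,R)+p(\delta(\varepsilon),0)^{1/2}w_d^{1/2}\,R^{d/2}\exp\!\Big(-t(1-2\varepsilon)\Big(\tfrac{k_0}{d\log R}\Big)^{\alpha/d}\Big)
\]
for all $R\ge\max\{R_\varepsilon(\w),\phi(t)\}$ and $t\ge1$, which is the assertion with $C(\varepsilon)=p(\delta(\varepsilon),0)^{1/2}w_d^{1/2}$. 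I do not expect a serious obstacle here: the only structural difference from cases (L) and (H) is that $k_0$ is no longer explicit, and the one thing to be careful about is precisely that the upper-bound argument uses solely the one-sided control $N(\lambda)\le e^{-k_0\lambda^{-d/\alpha}}$ furnished by \eqref{cre} — never the lower bound on $N(\lambda)$ — so the two-sidedness failing in the critical case costs us nothing.
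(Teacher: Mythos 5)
Your proposal is correct and follows essentially the same route as the paper, whose own proof simply cites \cite[Theorem~6.4]{Ok1} for the two-sided bound on $N(\lambda)$ and then points back to the derivation of \eqref{e:lim-u} in Subsection~\ref{section3.1} and the argument for Theorem~\ref{T:upper}(i) --- exactly the steps you reconstruct in full, correctly noting that only the one-sided upper bound $N(\lambda)\le e^{-k_0\lambda^{-d/\alpha}}$ is used. One small arithmetic slip: with $\lambda(R)=(1-\varepsilon)\bigl(k_0/(d\log R)\bigr)^{\alpha/d}$ you get $N(\lambda(R))\le R^{-d(1-\varepsilon)^{-d/\alpha}}$ rather than $R^{-d/(1-\varepsilon)}$, but since $(1-\varepsilon)^{-d/\alpha}>1$ this is still a negative power of $R$ after multiplying by $(2R)^d$, so the Borel--Cantelli step is unaffected.
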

When $Z$ is a symmetric $\alpha$-stable process with the exponent $\psi^{(\alpha)}(\xi)$ given in \eqref{alpha} for some $\alpha\in (0,2]$, and $K:=\lim_{|x|\to\infty}{\varphi(x)}{|x|^{d+\alpha}}\in (0,\infty)$, \^{O}kura proved the precise annealed asymptotics of $u^\w(t,x)$ in \cite[Theorem and Remark ii]{Ok2};  that is,  for all $x\in \R^d$,
$$\lim_{t\to\infty} \frac{\log \Ee_\Q[u^{\omega}(t,x)]}{t^{d/(d+\alpha)}}=- C(\rho,K),$$
where $$C(\rho,K)=\inf_{f\in L^2(\R^d;dx)\cap B_c(\R^d): \|f\|_{L^2(\R^d;dx)}=1}\left\{D(f,f)+W(f^2)\right\}$$ with $B_c(\R^d)$ being the set of measurable functions with compact support,
$D(f,f)$ being the Dirichlet form associated with the symmetric $\alpha$-stable process $Z$, and $$W(f^2)=\rho\int_{\R^d} \left[1-\exp\left(-K\int_{\R^d}\frac{f^2(z)}{|x-z|^{d+\alpha}}\,dz\right)\right]\,dx.$$
Then, by the Tauberian theorem of exponential type (see \cite[Theorem 3]{Kas}), we have
$$\lim_{\lambda \to0}\lambda^{d/\alpha} \log N (\lambda)=- k_0:=- \frac{\alpha}{d+\alpha}\left(\frac{d}{d+\alpha}\right)^{d/\alpha} C(\rho,K).$$ So, in this case we have a precise expression for the constant $\kappa_0$ in Theorem \ref{T:upper-c}.

 \ \

For quenched lower bounds of $u^\w(t,x)$, we have the following statement.
 \begin{theorem}\label{T-low-c} In the critical case {\rm(C)}, assume that \eqref{e:low} holds. Then, there is a constant $C_0>0$ such that for  any $\delta\in (0,1/2)$, $\kappa>1$, $a>1$, $\eta,\varsigma\in (0,1)$, $\Q$-almost surely  there is $R_{\kappa,a,\eta,\varsigma}(\w)\ge1$ so that for any $R\ge R_{\kappa,a,\eta,\varsigma}(\w)$ and $t\ge 1$,
\begin{align*}u^{\w}(t,0)\ge & C(\kappa,\delta,\eta,a)  M_{\kappa,\eta}(R)^{-4\delta d}[\Psi_\delta(2 M_{\kappa,\eta}(R))]^a  \exp\left(-a^2(1+\varsigma)C_0tR^{-\alpha} \right),\end{align*} where  \begin{equation}\label{e:ssskkk}M_{\kappa,\eta}(R)=R^{-\kappa}\exp\left(\frac{w_d\rho}{d}((1+2\eta)R)^d\right).\end{equation}
    \end{theorem}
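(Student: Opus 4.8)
The plan is to derive Theorem~\ref{T-low-c} from the proof of Theorem~\ref{T-low}(i), which applies \emph{verbatim} once its one light-tailed input, Proposition~\ref{L:lem-l3}(i), is replaced by its critical-case counterpart. So the first step is to establish the statement referred to as Proposition~\ref{L:lem-l4} in the proof of Proposition~\ref{L:lem-l3}: in the critical case {\rm(C)} there is a constant $C_0>0$ such that for any $a>1$, $\kappa>1$ and $\eta,\varsigma\in(0,1)$, $\Q$-almost surely there is $r_{\kappa,\eta,\varsigma,a}(\w)>0$ so that for every $r\ge r_{\kappa,\eta,\varsigma,a}(\w)$ one can find $z=z(r,\w)\in\R^d$ with $|z|\le M_{\kappa,\eta}(r)$ and $\lambda_{a^{-2}V^\w,B(z,r)}\le(1+\varsigma)C_0 r^{-\alpha}$, where $M_{\kappa,\eta}$ is the function in \eqref{e:ssskkk}. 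Granting this, the argument of Theorem~\ref{T-low}(i) carries over word for word: take $D=B(0,2M_{\kappa,\eta}(R))$ and $D_1=B(z,(1+\eta)R)\subset D$, apply Proposition~\ref{P:prol}, bound $\Ee_0\!\left[\exp\!\left(\tfrac{b}{a}\int_0^\delta V^\w(Z_s)\,ds\right):\tau_D>\delta\right]\le(2M_{\kappa,\eta}(R))^{3bd\delta/a}$ via \eqref{e:upper-v}, bound $\inf_{x\in D_1}p^D(\delta,0,x)\ge\Psi_\delta(2M_{\kappa,\eta}(R))$ via \eqref{e:low} and the monotonicity of $\Psi_\delta$, use $p(t-\delta,0)\le p(\delta,0)$ for $t\ge1$, $\delta\in(0,1/2)$, absorb the powers of $R$ into $M_{\kappa,\eta}(R)^{-4\delta d}$, and, by domain monotonicity, replace the eigenvalue in the factor $\exp(-a^2 t\,\lambda_{a^{-2}V^\w,D_1})$ of Proposition~\ref{P:prol} using $\lambda_{a^{-2}V^\w,D_1}\le\lambda_{a^{-2}V^\w,B(z,R)}\le(1+\varsigma)C_0 R^{-\alpha}$; this reproduces the claimed inequality.

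For the eigenvalue estimate I would rerun the proof of Proposition~\ref{L:lem-l3}(i) with the one necessary change. Put $\varphi_0(r)=\sup_{|x|\ge r}\varphi(x)$; then $\varphi_0$ is decreasing and, by \eqref{cre}, $\varphi_0(r)r^{d+\alpha}$ stays bounded away from both $0$ and $\infty$ for $r$ large, which replaces \eqref{e:ppp}. Keep the grid $I_r=\bigl((2(1+\eta)r)\Z^d\bigr)\cap\{z:|z|\le M_{\kappa,\eta}(r)\}$ and the events $F_r(z)=\{B(z,(1+\eta)r)\text{ contains a Poisson point}\}$, whose estimate $\Q(\cap_{z\in I_r}F_r(z))\le\exp(-r^d)$ is unchanged, but replace $G_r(z)$ by $\{\sup_{y\in B(z,r)}\sum_{\w_i\notin B(z,(1+\eta)r)}\varphi_0(y-\w_i)\ge A r^{-\alpha}\}$ for a \emph{large} constant $A$ in place of a small $\varepsilon r^{-\alpha}$. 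The exponential-moment and Markov estimates in the proof of Proposition~\ref{L:lem-l3}(i) go through with $\varphi_0(\eta r)$ now of exact order $(\eta r)^{-d-\alpha}$: using $|y-\w_i|\ge\eta|\w_i|/(1+\eta)$, the monotonicity of $\varphi_0$, $\int_{\R^d\setminus B(0,r)}\varphi_0(\eta z)/\varphi_0(\eta r)\,dz\asymp r^d$ and $e^x-1\le ex$ on $(0,1]$, one arrives at $\Q(G_r(0))\le\exp\bigl(-c(A)\,\eta^{d+\alpha}r^d\bigr)$ with $c(A)\to\infty$ as $A\to\infty$; choosing $A$ large makes $\Q(\cup_{z\in I_r}G_r(z))$ summable in $r$, so Borel--Cantelli yields, $\Q$-a.s.\ for all large $r$, a centre $z=z(r,\w)$ with $|z|\le M_{\kappa,\eta}(r)$ at which both $F_r(z)$ and $G_r(z)$ fail. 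On that event $\lambda_{V^\w,B(z,r)}\le\lambda_1(B(z,r))+Ar^{-\alpha}=\lambda_1(B(0,r))+Ar^{-\alpha}$, and Lemma~\ref{L:lem-l33} gives $\lambda_1(B(0,r))\le(1+\zeta)\lambda_1^{(\alpha)}(B(0,1))r^{-\alpha}$, so $\lambda_{V^\w,B(z,r)}\le(1+\varsigma)C_0 r^{-\alpha}$ with $C_0:=\lambda_1^{(\alpha)}(B(0,1))+A$ after absorbing $\zeta$. Since $a^{-2}\varphi\le\varphi$, the same $A$ and $C_0$ serve for $a^{-2}V^\w$, only the threshold $r_{\kappa,\eta,\varsigma,a}(\w)$ depending on $a$, exactly as in Remark~\ref{r:3.6}(i).

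The main obstacle is this eigenvalue estimate. In the critical case the tail contribution $\sup_{y\in B(z,r)}\sum_{\w_i\notin B(z,(1+\eta)r)}\varphi_0(y-\w_i)$ is of the \emph{same} order $r^{-\alpha}$ as the free Dirichlet eigenvalue $\lambda_1(B(0,r))$, so --- unlike in \cite{KP} and in the light-tailed Proposition~\ref{L:lem-l3}(i) --- it cannot be rendered negligible by letting a small parameter tend to $0$; one has to accept the ensuing loss $C_0=\lambda_1^{(\alpha)}(B(0,1))+A$ in the constant, and, more delicately, one has to verify that the enlarged threshold $A$ can be chosen so that $\Q(G_r(0))$ decays fast enough in $r$ to overcome the union bound over the exponentially many candidate centres in $I_r$, whose logarithmic cardinality is of order $w_d\rho(1+2\eta)^d r^d$ --- it is precisely this balance that forces the threshold (hence the constant) to be taken large. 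Once it is secured, every remaining ingredient --- Proposition~\ref{P:prol} and the inputs \eqref{e:upper-v}, \eqref{e:low}, together with the monotonicity of $t\mapsto p(t,0)$ --- is exactly as in the proof of Theorem~\ref{T-low}(i), so no additional difficulty is anticipated.
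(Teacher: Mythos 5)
Your proposal is correct and essentially reproduces the paper's own argument: the paper proves exactly the eigenvalue bound you describe (its Proposition~\ref{L:lem-l4}), replacing the small threshold $\varepsilon r^{-\alpha}$ of the light-tailed case by a \emph{large} constant $C_*r^{-\alpha}$ (your $A$), chosen so that the Markov--exponential-moment bound on $\Q(G_r(0))$ beats the exponential cardinality of the grid $I_r$, and then feeds this into the proof of Theorem~\ref{T-low}(i) word for word. Your computation of the exponent $\eta^{d+\alpha}$ in the $\Q(G_r(0))$ bound is in fact the correct one (the paper's $\eta^{d}$ there appears to be a slip), and your monotonicity remark $\lambda_{a^{-2}V^\w,D}\le\lambda_{V^\w,D}$ correctly handles the $a^{-2}$ rescaling.
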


To prove Theorem \ref{T-low-c}, we need the following proposition, which is analogous to Proposition \ref{L:lem-l3}.

\begin{proposition}\label{L:lem-l4} In the critical case {\rm(C)}, for any $\kappa>1$ and $\eta,\varsigma\in (0,1)$, $\Q$-almost surely there exists $r_{\kappa,\eta,\varsigma}(\w)>0$ so that for all $r\ge r_{\kappa,\eta,\varsigma}(\w)$, there is $z:=z(r,\w)\in \R^d$ with $|z|\le M_{\kappa,\eta}(r)$,
$$\lambda_{V^\w,B(z,r)}\le \left((1+\varsigma)\lambda_1^{(\alpha)}(B(0,1))+C_1(\eta)\right) r^{-\alpha},$$ where
 $M_{\kappa,\eta}(r)$ is defined by \eqref{e:ssskkk},
  $C_1(\eta)$ is a positive constant depending on $\eta$ only, and $\lambda_1^{(\alpha)}(B(0,1))$ is the principle Dirichlet eigenvalue for the symmetric $\alpha$-stable process with the exponent $\psi^{(\alpha)}(\xi)$ given in \eqref{e:a1-1} and killed upon exiting $B(0,1)$. \end{proposition}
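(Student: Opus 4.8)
The plan is to run the proof of Proposition~\ref{L:lem-l3}(i) essentially verbatim; the only genuine change is that the contribution of the far Poissonian points, which was $o(r^{-\alpha})$ in the light tailed case and could therefore be absorbed into an arbitrarily small $\varepsilon r^{-\alpha}$, is now of exact order $r^{-\alpha}$ and survives as the additive constant $C_1(\eta)$. First I would record the consequence of \eqref{cre} that replaces \eqref{e:ppp}: keeping $\varphi_0(r):=\sup_{|x|\ge r}\varphi(x)$, $\varphi_0(x):=\varphi_0(|x|)$ (so $\varphi\le\varphi_0$ and $\varphi_0$ is radially nonincreasing), condition \eqref{cre} yields constants $0<c_*\le C_*$ and $r_*>0$ with $c_* s^{-d-\alpha}\le\varphi_0(s)\le C_* s^{-d-\alpha}$ for $s\ge r_*$ (the lower bound comes from $\liminf_{|x|\to\infty}\varphi(x)|x|^{d+\alpha}>0$, the upper one from the corresponding $\limsup<\infty$). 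I keep $\kappa>1$, $\eta\in(0,1)$ and $I_r:=\bigl((2(1+\eta)r)\Z^d\bigr)\cap\{|z|\le M_{\kappa,\eta}(r)\}$ as before, keep the events $F_r(z)$ unchanged (their analysis involves only Poisson point counting, so $\Q(\cap_{z\in I_r}F_r(z))\le e^{-r^d}$ for $r$ large, word for word), and replace $G_r(z)$ by
\[
G_r(z)=\Bigl\{\sup_{y\in B(z,r)}\sum_{\w_i\notin B(z,(1+\eta)r)}\varphi_0(y-\w_i)\ge C_1(\eta)\,r^{-\alpha}\Bigr\},
\]
with $C_1(\eta)$ a constant to be fixed below.

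Next I would re-run the exponential-moment/Markov estimate for this new $G_r(z)$. Using $|y-\w_i|\ge\eta|\w_i|/(1+\eta)$, the monotonicity of $\varphi_0$, and $e^x-1\le ex$ on $(0,1]$, exactly as in Proposition~\ref{L:lem-l3}(i) one obtains $\Ee_\Q\exp\bigl(\varphi_0(\eta r)^{-1}\sup_{y\in B(0,r)}\sum_{\w_i\notin B(0,(1+\eta)r)}\varphi_0(y-\w_i)\bigr)\le\exp\bigl(e\rho(1+\eta)^d\int_{\R^d\setminus B(0,r)}\varphi_0(\eta z)/\varphi_0(\eta r)\,dz\bigr)$, and the two-sided bound turns the integral into something $\le c_1(\eta)r^d$. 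The Markov inequality then gives $\Q(G_r(0))\le\exp\bigl(c_1(\eta)r^d-C_1(\eta)r^{-\alpha}/\varphi_0(\eta r)\bigr)\le\exp\bigl((c_1(\eta)-\eta^{d+\alpha}C_1(\eta)/C_*)r^d\bigr)$, using $\varphi_0(\eta r)\le C_*(\eta r)^{-d-\alpha}$. Since $|I_r|\asymp r^{-(1+\kappa)d}\exp\bigl(w_d\rho(1+2\eta)^d r^d\bigr)$, the union bound over $z\in I_r$ (the $G_r(z)$ are equidistributed but not independent, as before) decays exponentially in $r^d$ as soon as $\eta^{d+\alpha}C_1(\eta)/C_*>c_1(\eta)+w_d\rho(1+2\eta)^d$, which can be arranged by taking $C_1(\eta)$ large; as $c_1(\eta),C_*,\rho,d,\alpha$ are fixed once $\eta$ is, this $C_1(\eta)$ indeed depends on $\eta$ only. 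With this choice $\Q(\cap_{z\in I_r}(F_r(z)\cup G_r(z)))\le c_2 e^{-c_3 r^d}$, Borel--Cantelli applies, and $\Q$-a.s. for all $r$ large there is $z=z(r,\w)$ with $|z|\le M_{\kappa,\eta}(r)$ for which neither $F_r(z)$ nor $G_r(z)$ holds.

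The conclusion is then identical to that of Proposition~\ref{L:lem-l3}(i): for such $z$, $\varphi\le\varphi_0$ together with the failure of $G_r(z)$ give $\lambda_{V^\w,B(z,r)}\le\lambda_{\tilde V^\w,B(z,r)}+C_1(\eta)r^{-\alpha}$, where $\tilde V^\w(x)=\sum_{\w_i\in B(z,(1+\eta)r)}\varphi_0(x-\w_i)$; the failure of $F_r(z)$ forces $\tilde V^\w\equiv0$, so $\lambda_{\tilde V^\w,B(z,r)}=\lambda_1(B(z,r))=\lambda_1(B(0,r))$ by translation invariance of $Z$, and Lemma~\ref{L:lem-l33} bounds the latter by $(1+\varsigma)r^{-\alpha}\lambda_1^{(\alpha)}(B(0,1))$ for $r$ large. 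Adding the two estimates yields $\lambda_{V^\w,B(z,r)}\le\bigl((1+\varsigma)\lambda_1^{(\alpha)}(B(0,1))+C_1(\eta)\bigr)r^{-\alpha}$, which is the claim.

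I expect the only real obstacle to be the bookkeeping in the middle step: one must verify that $C_1(\eta)$ can be chosen depending on $\eta$ alone while still dominating the entropy term $w_d\rho(1+2\eta)^d$ coming from the cardinality of $I_r$. This works precisely because in the critical regime $\varphi_0(s)\asymp s^{-d-\alpha}$ from both sides, so the exponential-moment constant $c_1(\eta)$ and the Markov-level gain $\eta^{d+\alpha}C_1(\eta)/C_*$ are both genuinely of order $r^d$, and the latter can be pushed past the former (and past the entropy) by enlarging $C_1(\eta)$. Everything else transfers unchanged from Proposition~\ref{L:lem-l3}(i) and Lemma~\ref{L:lem-l33}.
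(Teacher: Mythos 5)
Your proposal is correct and follows the paper's proof essentially line by line: same lattice $I_r$, same events $F_r(z)$, same modified $G_r(z)$ with a threshold constant to be chosen, same exponential-moment/Markov estimate exploiting the two-sided bound $\varphi_0(s)\asymp s^{-d-\alpha}$ in place of \eqref{e:ppp}, and the same choice of $C_1(\eta)$ large enough to beat both the moment constant and the entropy $w_d\rho(1+2\eta)^d$. (Incidentally, your exponent $\eta^{d+\alpha}$ in the Markov step is the correct one; the paper writes $\eta^{d}$ there, which appears to be a harmless typo absorbed into the constant.)
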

\begin{proof} We use some notations from the proof of Proposition \ref{L:lem-l3}(i). For any $\kappa>1$ and $\eta\in (0,1)$, let
$I_r:=( (2(1+\eta)r)\Z^d)\cap \{z\in \R^d: |z|\le M_{\kappa,\eta}(r)\}$ for any $r>0$. Still define $\varphi_0(x)=\varphi_0(|x|)$ for any $x\in \R^d$, where $\varphi_0(r)=\sup_{|x|\ge r}\varphi(x)$ for $r\in [0,\infty)$. It is clear that $\varphi(x)\le \varphi_0(x)$, and $\varphi_0(r)$ is a  decreasing function on $[0,\infty)$ which satisfies that there are constants $c_1,c_2>0$ so that for $r$ large enough
\begin{equation}\label{label}c_1r^{-d-\alpha}\le \varphi_0(r)\le c_2r^{-d-\alpha},\end{equation} thanks to \eqref{cre}.

Now, for any $z\in I_r$, define $F_r(z)$  as in the proof of Proposition \ref{L:lem-l3}(i), and
$$G_r(z)= \left\{\sup_{y\in B(z,r)}\sum_{\omega_i\notin B(z,(1+\eta)r)}\varphi_0(y-\omega_i)\ge C_*   r^{-\alpha} \right\}$$ for some constant $C_*>0$ which is chosen later.
As shown in the proof of Proposition \ref{L:lem-l3}(i), for $r>1$ large enough,
$$\Q(\cap_{ z\in I_r} F_r(z))\le  \exp(-r^d/2).$$

On the other hand, by the deceasing property of $\varphi_0(r)$ and \eqref{label}, for all $r$ large enough,
\begin{align*} &\Q\left[\exp\left(\frac{1}{\varphi_0(\eta r)}\sup_{y\in B(0,r)}\sum_{\omega_i\notin B(0,(1+\eta)r)}\varphi_0(y-\omega_i) \right)\right]\\
&\le \Q\left[\exp\left(\frac{1}{\varphi_0(\eta r)} \sum_{\omega_i\notin B(0,(1+\eta)r)}\varphi_0(\eta |\omega_i|/(1+\eta)) \right)\right]\\
&= \exp\left(\rho\int_{\R^d\backslash B(0,(1+\eta)r)}\left(e^{\varphi_0(\eta r)^{-1}\varphi_0(\eta|z|/(1+\eta))}-1\right)\,dz   \right)\\
&\le \exp \left(e\rho(1+\eta)^d\int_{\R^d\backslash B(0,r)} \frac{\varphi_0(\eta|z|)}{\varphi_0(\eta r)}\,dz   \right)\\
&\le \exp\left( c_3 r^d\right),\end{align*} where $c_3:=c_3(\eta)$ is independent of $r$.
This along with the Markov inequality and \eqref{label} yields that for $r$ large enough
\begin{align*} \Q(G_r(0))\le & \Q\left[\exp\left(\frac{1}{\varphi_0(\eta r)}\sup_{y\in B(0,r)}\sum_{\omega_i\notin B(0,(1+\eta)r)}\varphi_0(y-\omega_i) \right)\ge \exp\left( \frac{C_* r^{-\alpha} }{\varphi_0(\eta r)}\right)\right]\\
\le&  \exp\left( c_3r^d- C_*c_2^{-1}\eta^{d} r^d\right).\end{align*}
Since $\{G_r(z)\}_{z\in I_r}$ have the same distribution (but are not independent with each other), we find that
\begin{align*} \Q(\cup_{ z\in I_r}G_r(z))
&\le   2\left(\frac{M_{\kappa,\eta}(r)}{(1+\eta)r}\right)^d \exp\left( c_3r^d- C_*c_2^{-1}\eta^{d} r^d\right)\\
&=  2(1+\eta)^{-d} r^{-(\kappa+1)d}\exp\left[(\rho w_d (1+2\eta)^d  +c_3 - C_*c_2^{-1}\eta^{d}) r^d\right].\end{align*}
Now, we take $$C_*=2c_2\eta^{-d}(\rho w_d (1+2\eta)^d  +c_3), $$ and so $$\Q(\cup_{ z\in I_r}G_r(z))\le 2(1+\eta)^{-d} r^{-(\kappa+1)d}\exp\left(- \frac{1}{2c_2}C_*\eta^{d} r^d\right).$$

Therefore, for all $r$ large enough, \begin{align*}\Q(\cap_{ z\in I_r}(F_r(z)\cup G_r(z))
 \le &\Q(\cap_{ z\in I_r}F_r(z))+\Q(\cup_{ z\in I_r}G_r(z))\\
 \le &\exp(-r^d/2)+2(1+\eta)^{-d} r^{-(\kappa+1)d}\exp\left(- \frac{1}{2c_2}C_*\eta^{d} r^d\right).\end{align*}
Hence, by the Borel-Cantelli lemma, $\Q$-almost surely there exists  $z:=z(r,\w)\in\R^d$ such that $|z|\le M_{\kappa,\eta}(r)$,  and both $F_r(z)$ and $G_r(z)$  fail to hold.

With this at hand, one can follow the proof of  Proposition \ref{L:lem-l3}(i) to get the desired assertion. \end{proof}

According to Proposition \ref{L:lem-l4}, one can repeat the argument for  Theorem \ref{T-low} to prove Theorem \ref{T-low-c}.  Furthermore, as an application of Theorems \ref{T:upper-c} and \ref{L:lem-l4}, we also can obtain Proposition \ref{P:cre}. The details are omitted here.

\bigskip

\noindent{\bf Acknowledgement.} The research  is supported by the National Natural Science Foundation of China (No.\ 11831014), the Program for Probability and Statistics: Theory and Application (No.\ IRTL1704), and the Program for Innovative Research Team in Science and Technology in Fujian Province University (IRTSTFJ).

\end{document}